\documentclass[preprint,reqno,11pt]{imsart}

\usepackage[utf8]{inputenc}
\usepackage{amsmath, amssymb, amsthm}
\usepackage{mathtools}
\usepackage{xcolor}
\usepackage{bm}
\usepackage[shortlabels]{enumitem}
\usepackage{hyperref}
\hypersetup{
	colorlinks,
	linkcolor={red!50!black},
	citecolor={blue},
	urlcolor={blue!80!black}
}
\usepackage[noabbrev,capitalize]{cleveref}
\usepackage[round]{natbib}
\allowdisplaybreaks

\usepackage{titlesec}
\titleformat{\subsubsection}[runin]
{\normalfont\normalsize\bfseries}{\thesubsubsection.}{0.5em}{}
\titlespacing*{\subsubsection}
{0pt}{1ex plus .5ex minus .2ex}{0.5em}

\newtheorem{thm}{Theorem}[section]
\newtheorem{cor}{Corollary}[section]
\newtheorem{prop}{Proposition}[section]
\newtheorem{lemma}{Lemma}[section]
\newtheorem{defi}{Definition}[section]
\newtheorem{assume}{Assumption}[section]

\newtheorem{remark}{Remark}[section]

\numberwithin{equation}{section}

\crefname{assume}{Assumption}{Assumptions}
\crefname{remark}{Remark}{Remarks}
\crefname{thm}{Theorem}{Theorems}
\crefname{cor}{Corollary}{Corollaries}
\crefname{prop}{Proposition}{Propositions}
\crefname{lemma}{Lemma}{Lemmas}
\crefname{defi}{Definition}{Definitions}
\crefname{condi}{Condition}{Conditions}
\crefname{ex}{Example}{Examples}
\crefname{property}{Property}{Properties}

\newcommand{\Hel}{\mathrm{Hel}}
\newcommand{\Helsq}{\Hel^2}
\newcommand{\pihat}{\hat{\pi}_n}
\newcommand{\pistar}{\pi_\star}
\newcommand{\cP}{\mathcal{P}}
\newcommand{\cN}{N}
\newcommand{\cF}{\mathcal{F}}
\newcommand{\R}{\mathbb{R}}
\newcommand{\E}{\mathbb{E}}
\newcommand{\Prob}{\mathbb{P}}
\newcommand{\Var}{\mathrm{Var}}

\newcommand{\avgn}{\frac{1}{n}\sum_{i=1}^{n}}
\newcommand{\rbar}[1]{\bar{r}_{#1}}
\newcommand{\sigvec}{\sigma^{(n)}}
\newcommand{\KL}{\mathrm{KL}}

\newcommand{\kpi}{\mathcal{K}_{\pi,i}}
\newcommand{\mcn}{\mathcal{X}_{n,t}}
\newcommand{\xrightarrowP}{\xrightarrow{\,\PP\,}}

\newcommand{\Wass}{W_2}
\newcommand{\Wtwo}{W_2}
\newcommand{\taun}{\tau_n}
\newcommand{\TV}{\mathrm{TV}}
\newcommand{\rmis}{R^{\star}}

\DeclareMathOperator{\bzr}{\mathbf{0}}
\DeclareMathOperator{\bI}{\bm I}
\DeclareMathOperator{\mP}{\mathcal{P}}
\DeclareMathOperator{\hbt}{\hat{\boldsymbol \beta}_{(p)}}
\newcommand{\hbtf}[1]{\hat{\boldsymbol \beta}^{#1}_{(p)}}
\DeclareMathOperator{\tv}{d_{\mathrm{TV}}}
\DeclareMathOperator{\bz}{\mathbf{Z}}
\DeclareMathOperator{\PP}{\mathbb{P}}
\DeclareMathOperator{\EE}{\mathbb{E}}
\DeclareMathOperator{\tM}{\widetilde{M}}

\DeclareMathOperator{\pst}{\pi_{\star}}
\newcommand{\primary}{\theta}

\newcommand{\simindep}{\,{\buildrel \text{ind} \over \sim\,}}
\newcommand{\simiid}{\,{\buildrel \text{iid} \over \sim\,}}

\newcommand{\fmix}[2]{f_{#1,#2}}

\usepackage{macros}

\begin{document}
	\begin{frontmatter}
		\title{Normal approximations \\ in nonparametric empirical Bayes}
		\runtitle{Normal approximations in nonparametric empirical Bayes}
		\runauthor{Chen, Deb, and Ignatiadis}
		\begin{aug}
			\author{\fnms{Jiafeng} \snm{Chen}\ead[label=e1]{jiafeng@stanford.edu}},
			\author{\fnms{Nabarun} \snm{Deb}\ead[label=e2]{nabarun.deb@chicagobooth.edu}},
			\and
			\author{\fnms{Nikolaos} \snm{Ignatiadis}\ead[label=e3]{ignat@uchicago.edu}}
		\end{aug}
		
		\address{Department of Economics, Stanford University\printead[presep={,\ }]{e1}}
		
		\address{Chicago Booth School of Business\printead[presep={,\ }]{e2}}
		
		\address{Department of Statistics and Data Science Institute, University of Chicago\printead[presep={,\ }]{e3}}
		\begin{abstract}
			Empirical Bayes analyses routinely model noisy measurements of latent parameters as normal, justifying this by an informal appeal to the central limit theorem (CLT). This paper puts this heuristic appeal  on firmer analytical grounds. We show that the denoising regret of the nonparametric maximum likelihood estimator (NPMLE) and related sieve methods is controlled by the rate attained under exact normality, plus a term reflecting the quality of the CLT approximation. The CLT need only hold marginally for each coordinate, and moreover only on average, without needing high-dimensional normal approximations. We identify two asymptotic regimes in which the normal approximation is adequate and the empirical Bayesian prior remains informative, and we show that our guarantees are robust to dependence and to variance estimation.
		\end{abstract}

	\end{frontmatter}
	
	\section{Introduction}{}

	Empirical Bayes (EB) methods~\citep{robbins1956empirical, efron2010largescale} are increasingly popular in applied work in economics,
	statistics, and adjacent fields; see~\citet{walters2024empirical} for a survey in labor economics and~\citet{koenker2026empirical} for a recent monograph. A typical exercise has the analyst observe noisy summaries
	$X_1,\ldots, X_n$ for underlying parameters $\theta_1,\ldots, \theta_n$, e.g., teacher quality in the
	value-added literature
	\citep{gilraine2020new}, place effects \citep{bergman2024creating}, gene
	expression differences~\citep{stephens2017false}, treatment effects from A/B tests
	\citep{azevedo2019empirical,abadie2023estimating}, and seek to make decisions that
	involve $\theta_i$.
	
	A nearly universal modeling convention, at least in econometrics and 
	economics, treats $X_i \mid \theta_i, \sigma_i \sim \Norm (\theta_i,
	\sigma_i^2)$, with the standard deviation $\sigma_i$ treated as known and equal to the
	observed and estimated standard errors of $X_i$. This convention is often justified by a
	heuristic appeal to the central limit theorem (CLT) applied to whatever micro-data that
	produced
	$X_i$. Typically, $X_i$ are estimates from certain well-behaved statistical procedures 
	(e.g., regression coefficients) estimated off the underlying micro-data and are
	asymptotically normal in the size of the micro-data. Thus the analyst proceeds as if $X_i$
	were exactly normal.
	
	This appeal to the CLT is pervasive enough to merit a catalog. In labor economics, this
	normality assumption is equation (1) in the review by \citet{walters2024empirical},
	treated as a starting point of EB analyses. \citet{kline2022systemic}, who study
	discriminatory hiring practices, invoke an asymptotic approximation for estimates at the
	firm level ``with a growing number of jobs sampled for each firm.''
	\citet{angrist2017leveraging}, who study estimates of school quality modeled as normal
	estimates, write `` subject to the usual asymptotic approximations, these errors are
	normally distributed with a known covariance structure.'' In e-commerce and A/B testing,
	\citet{abadie2023estimating} model the treatment effect estimates as normal, ``motivated by approximate Gaussianity of the large sample distributions of many commonly used
	estimators of treatment effects.'' Similar appeals are made in
	\citet{azevedo2019empirical,azevedo2020testing} and \citet{wernerfelt2025estimating}. In
	public economics, \citet{moon2026optimal} studies the marginal value of public funds
	\citep{hendren2020unified} as a compound decision problem and models the benefits and cost
	estimates of policies as normal, ``motivated by the central limit theorem.'' Finally,
	the econometrics and statistics literature on empirical Bayes often takes normality as
	given
	\citep{zhang1997empirical,    jiang2009general, efron2011tweedie, 
		Jiang2020,   ignatiadis2022confidence,deb2022twocomponent, gu2023invidious, Soloff2024, hoff2025selective},
	with some explicit appeals to this heuristic (e.g., ``a CLT applied to the underlying
	micro-data'' in \citet{chen2024empirical}). Appendix~\ref{sec:handwave} contains more
	examples.

	With rare exceptions, such statements are not rigorously justified.\footnote{One could
		avoid assuming normality with simpler (e.g., linear) decision rules~\citep{kou2017optimal,ignatiadis2019covariatepowered, ghosh2025steins, kwon2026optimal}. In certain simplified settings, the approximation of the central
		limit theorem can be explicitly accounted for
		\citep{armstrong2022robust,law2023distributional,chen2025compound}.}  The natural question is then---as put explicitly by \citet{hirano2023comment}---``In many cases, proceeding as
	if the data are normally distributed may be quite reasonable, but can this appeal to
	approximate normality be put on firmer analytical grounds?'' 
	
	Resolving this question is
	important for the applicability of empirical Bayes, which makes heavy use of the
	normality of the likelihood. If nonparametric EB methods turn out to be
	sensitive to the normal approximation, that would severely limit their applicability.
	Answering this question would also clarify the extent to which imposing normality directly at the micro-data level---an assumption made by, e.g., \citet{gu2017empirical, banerjee2020nonparametric, ignatiadis2025empirical, ho2025large, gaillac2025predicting, lee2026parametric, song2026empiricala}---is more restrictive than the normality assumption on $X_i$. If \emph{approximate} normality of $X_i$ does suffice for nonparametric EB, then assuming micro-data normality is in fact a much more substantive restriction that could be potentially relaxed.
	This question also appears technically non-obvious. Since the empirical Bayes literature often studies a large number of approximately normal estimates, it is not clear whether that would correspondingly require tools from high-dimensional central limit theory \citep{chernozhukov2017central}. 
	
	This paper presents such a theory for nonparametric empirical Bayes
	methods, in particular the workhorse nonparametric maximum likelihood estimator (NPMLE)
	\citep{jiang2009general,kiefer1956consistency} and related sieve maximum likelihood methods 
	\citep{efron2016empirical}.

	\subsection{Preview of results}
	
	Let $X_1,\ldots, X_n$ be noisy estimates of $\theta_1,\ldots, \theta_n$ that are approximately normally distributed, each with variance $\sigma_i^2$. We assume that the parameters $\theta_i$ are random effects drawn from some distribution $\pistar$, which we refer to as the true prior. We consider an analyst that proceeds as follows: given a class $\mathcal{G}$ of distributions supported on $[-M,M]$ (which could be the class of all distributions $\mathcal{P}([-M,M])$), the analyst computes,
	\begin{equation}
		\pihat \in \operatorname*{argmax}_{\pi \in \mathcal{G}} \; \avgn \log \int \phi\!\left(X_i - \theta;\,\sigma_i\right) d\pi(\theta), \label{eq:npmle}
	\end{equation}
	where $\phi(\cdot;\sigma_i)$ is the pdf of $N(0,\sigma_i^2)$. Next, the analyst
	estimates $\theta_i$ by computing the posterior mean of $\theta_i$ given $X_i$, again
	under the possibly false assumption that $X_i$ is normal, that is,
	\begin{equation}
		\hat{\theta}_i := \frac{ \int \theta \phi(X_i-\theta; \sigma_i) d \hat{\pi}_n(\theta)}{\int \phi(X_i-\theta; \sigma_i) d \hat{\pi}_n(\theta)}.
		\label{eq:hat_theta_eb}
	\end{equation}
	In the true data-generating process, we have that $\theta_i \sim \pistar$ for a prior
	$\pistar \in \mathcal{G}$, but $X_i$ is \emph{not} normal but can be approximately coupled to a normal $Z_i$ with $Z_i \mid \theta_i \sim N(\theta_i,\sigma_i^2)$. We then ask: what is the \emph{denoising regret},
	\begin{equation}\label{eq:regret_eq}
		\mathrm{Regret}_n := \sqrt{\avgn \E\bigl|\theta_i - \hat{\theta}_i\bigr|^2} - \sqrt{\avgn \E\bigl|\theta_i - \mathbb E_{\pistar}[\theta_i \mid Z_i]\bigr|^2},
	\end{equation}
	of $\hat{\theta}_i$ relative to \emph{a normal oracle}? By normal oracle, we refer to the
	estimator that uses the coupled normal $Z_i$ alongside the true prior $\pistar$ to
	optimally estimate $\theta_i$ in mean squared error. We show that $\mathrm{Regret}_n \to
	0$ as $n \to \infty$---under triangular array asymptotics for which the approximation $X_i
	\approx Z_i$ becomes increasingly more accurate as $n \to \infty$---and quantify the speed
	of this convergence as a function of $n$ and the quality of the CLT approximation.
	In doing so, we also make precise what a meaningful asymptotic setup entails, in terms of
	the local asymptotic scaling of parameters and distributions.
	
	To illustrate both our regret results and our local asymptotic regimes, let us suppose that
	$X_i$ is the average of $J_i$ observations $X_{ij}$, each with mean $\theta_i$, that is,
	\begin{equation}
		X_i = \frac{1}{J_i}\sum_{j=1}^{J_i} X_{ij} = \theta_i + \frac{1}{J_i}\sum_{j=1}^{J_i} \varepsilon_{ij},
		\label{eq:xi_avg}
	\end{equation}
	where $\varepsilon_{ij} = X_{ij}-\theta_i$ are iid conditional on $\theta_i$ with $\mathbb E[\varepsilon_{ij}|\theta_i]=0$. Write $\tau_i^2 = \Var[\varepsilon_{ij} \mid \theta_i]$. If $\tau_i^2$ remains bounded away from $0$ and $\infty$, 
	then as $J_i \to \infty$, the CLT yields that \smash{$X_i \approx N(\theta_i, \tau_i^2/J_i)$}. However, under the same conditions and if $\pistar$ is sufficiently regular, then the Bernstein-von Mises theorem implies \smash{$\primary_i \mid X_i \approx  N(X_i, \tau_i^2/J_i)$}, rendering empirical Bayes unhelpful in this limit (since the posterior no longer depends on $\pistar$). However, asymptotic approximations in statistics aim to capture relevant finite-sample aspects. For EB with normal likelihood to be both valid and useful, we need an asymptotic setup where the normal approximation is adequate, and the prior continues to provide meaningful information. 
	We provide two such regimes.
	
	\subsubsection{Increasing variances.}
	\label{subsubsec:variance} The first regime we consider is such that $\tau_i^2$ grows proportionally with $J_i$, namely, $\tau_i^2 = J_i \sigma_i^2$ where $\sigma_i^2$ remains bounded away from $0$ and $\infty$. In that case, as $J_i \to \infty$, $X_i \approx \Norm(\theta_i, \sigma_i^2)$. Prior information about $\theta_i$ in the form $\theta_i \sim \pistar$ continues to be relevant. 
	As a concrete example of this regime, a technology firm may enroll more users in A/B tests, but run each test for shorter durations, creating a scenario where both sample size $(J_i)$ and variability $(\tau_i^2)$ increase. Our main theorem in this setting is as follows.
	
	\begin{thm}\label{thm:incvar}
		Suppose that $\theta_i \simiid \pi_{\star}$ with $\pi_{\star} \in \mathcal{G}\subset \mathcal{P}([-M,M])$ for fixed $M$ and that $X_i$ can be written as in~\eqref{eq:xi_avg} with all $\varepsilon_{ij}$ jointly independent conditioned on the $\theta_i$. Suppose moreover that $ \Var[\varepsilon_{ij}|\theta_i] = \sigma_i^2 J_i$ for deterministic $\sigma_i^2$ that are bounded away from $0$ and $\infty$ and that $\varepsilon_{ij}/\sqrt{J_i}$ are uniformly $C$-sub-Gaussian conditioned on the $\theta_i$ for a fixed constant $C<\infty$. Then,
		$$
		\left(\mathrm{Regret}_n\right)^2_+ = \widetilde{\mathcal{O}}\left( \frac{1}{n} + \frac{1}{n}\sum_{i=1}^n \frac{1}{J_i}\right),
		$$
		where $\widetilde{\mathcal{O}}$ includes polylogarithmic factors.
		\label{thm:increasing-variance}
	\end{thm}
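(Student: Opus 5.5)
The proof splits into three steps: a deterministic reduction, a generic misspecified-NPMLE regret bound with a CLT-quality term, and a Berry--Esseen-type bound for that term. It is cleanest to work with $\mathbb{E}[(\theta_i-\hat\theta_i)^2]$ directly.

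\textbf{Step 1: reduction to the Bayes rule evaluated at $X_i$.} Write $\hat\theta_i = \hat t(X_i;\sigma_i)$ and let $t_\star(x;\sigma)=\E_{\pistar}[\theta\mid Z=x]$ be the normal-likelihood posterior mean. By Minkowski's inequality,
\begin{equation*}
\mathrm{Regret}_n \le \Big(\avgn \E|\hat t(X_i)-t_\star(X_i)|^2\Big)^{1/2} + \Big(\avgn \E|t_\star(X_i)-t_\star(Z_i)|^2\Big)^{1/2}.
\end{equation*}
The second term is bounded by coupling. Tweedie's formula shows that $t_\star(\cdot;\sigma)$ is Lipschitz on the relevant range with constant $\sigma^{-2}\Var_{\pistar}(\theta\mid x)\le M^2/\sigma^2$. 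Taking the quantile (optimal $W_2$) coupling of $X_i$ and $Z_i$ conditional on $\theta_i$, the term is at most $C\,(\avgn \E W_2^2(\mathcal{L}(X_i\mid\theta_i),N(\theta_i,\sigma_i^2)))^{1/2}$.

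For standardized sums of independent sub-Gaussian variables, the Wasserstein CLT (Rio's bound, or Bobkov's $W_2$ Berry--Esseen) gives $W_2^2 \lesssim 1/J_i$. This uses that $\varepsilon_{ij}/\sqrt{J_i}$ is uniformly sub-Gaussian with variance $\sigma_i^2$, so the normalized summands have bounded third/fourth moments. This term therefore contributes $O(\avgn 1/J_i)$ to the squared regret.

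\textbf{Step 2: the estimation error of $\hat t$ under misspecification.} This is the main obstacle: $\pihat$ maximizes a likelihood that is wrong for the law of $X_i$. The plan is to follow the Hellinger/Jiang--Zhang route.
\begin{enumerate}[(a)]
\item Let $f_{\pi,\sigma}=\phi_\sigma*\pi$ and let $p_i$ be the true density of $X_i$ (or its law). I would show that $\pihat$ nearly maximizes the expected log-likelihood up to an empirical-process term. For this I would use a metric-entropy bound for $\{\log f_{\pi,\sigma}\}$ over $\mathcal{P}([-M,M])$, whose covering number is $\exp(\mathrm{polylog}\,n)$ via moment matching. I would also use sub-Gaussian tails of $X_i$ to truncate at $|x|\lesssim \sqrt{\log n}$, where $\log f_{\pi,\sigma}$ is bounded by $O(\log n)$.
\item From this I would deduce $\avgn \Helsq(f_{\pihat,\sigma_i},f_{\pistar,\sigma_i}) = \widetilde{\mathcal{O}}(1/n + \avgn \delta_i)$, where $\delta_i$ measures the discrepancy between $p_i$ and $f_{\pistar,\sigma_i}$. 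The natural choice is a KL- or $\chi^2$-type divergence after smoothing. To get $\delta_i\lesssim 1/J_i$ rather than $1/\sqrt{J_i}$, I would compare the expected log-likelihood ratio $\E_{p_i}\log(f_{\pi}/f_{\pistar})$ with $\E_{f_{\pistar}}$ of the same function. The difference is controlled by $W_2$ times the Lipschitz/smoothness constant of $\log f_\pi$, which is $O(\sqrt{\log n})$ on the truncated range. An Edgeworth-type second-order argument may be needed to obtain the squared rate: the first-order term vanishes because means and variances match exactly. This is the step where I expect the real difficulty and polylog losses.
\item Finally I would convert the Hellinger bound to a bound on $\avgn\E_{p_i}|\hat t-t_\star|^2$ using the standard Tweedie-based inequality $\E|t_{\pi}-t_{\pistar}|^2\lesssim (\log n)^3 \Helsq$ (Jiang--Zhang, Soloff et al.). The expectation under $p_i$ is switched to $f_{\pistar,\sigma_i}$ at a cost again controlled by the coupling from Step 1, using Lipschitzness of both posterior means.
\end{enumerate}

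\textbf{Step 3: assembly.} Combining the pieces gives $\mathrm{Regret}_n^2\le \widetilde{\mathcal{O}}(1/n + \avgn 1/J_i)$. Taking the positive part handles the case where the regret is negative, since the oracle uses $Z_i$ rather than $X_i$. A high-probability bound from Step 2 is converted to an expectation using the boundedness $|\hat\theta_i|,|\theta_i|\le M$.
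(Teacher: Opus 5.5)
Your architecture matches the paper's. You use truncation, an $\exp(\mathrm{polylog}\,n)$ cover and a concentration step for the misspecified Hellinger rate. You use the Lipschitz posterior mean with a conditional $W_2$ coupling and Bobkov's $W_2$ Berry--Esseen bound for the $\mathcal{W}_n$ term, and the Jiang--Zhang/Soloff et al.\ conversion of Hellinger to regret.

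\textbf{The gap is Step 2(b)}, which is where the $1/J_i$ rate is decided, and the mechanism you propose there does not deliver it. Write $t_\pi=\log(f_{\pi,\sigma_i}/f_{\pistar,\sigma_i})$ and $\tilde\varepsilon_{ij}=\varepsilon_{ij}/\sqrt{J_i}$.
\begin{itemize}
\item Bounding $\E_{p_i}t_\pi-\E_{f_{\pistar,\sigma_i}}t_\pi$ by $W_2$ times a Lipschitz constant gives only $J_i^{-1/2}$.
\item The cancellation you hope for does not occur. Matching means and variances kills only the first- and second-order terms of the Lindeberg expansion. The third-order term survives: each of the $J_i$ swaps contributes $J_i^{-3/2}\,\E(\tilde\varepsilon_{ij}^3\mid\theta_i)\,\E\,t_\pi'''/6$, so the total is of order $J_i^{-1/2}$. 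In Edgeworth form it is $\kappa_3\,\E\,t_\pi'''/(6\sqrt{J_i})$.
\item The hypotheses allow skewed $\varepsilon_{ij}$. So for fixed $\pi\neq\pistar$ the difference is genuinely of order $J_i^{-1/2}$, and any discrepancy $\delta_i$ bounding it uniformly in $\pi$ is $\gtrsim J_i^{-1/2}$.
\item Fed into Step 2(a), this yields only $(\mathrm{Regret}_n)_+^2=\widetilde{\mathcal O}(1/n+\avgn J_i^{-1/2})$.
\end{itemize}

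\textbf{The missing idea} is to localize this error against the curvature of the likelihood rather than bound it uniformly in $\pi$. The paper does this in four moves.
\begin{itemize}
\item Replace the remaining summands by Gaussians inside the coefficient, at total extra cost $O(J_i^{-1})$. The coefficient becomes $\E[t_\pi'''(\theta+\tilde Y)\mid\theta]$ with $\tilde Y$ Gaussian.
\item Stein's identity rewrites this as $c\,\E[t_\pi(\theta+sZ)H_3(Z)\mid\theta]$.
\item Cauchy--Schwarz and \cref{lem:Vk-hellinger-bound} bound it by $\sqrt{g_T(d_\pi^2)}\lesssim d_\pi\log n$, where $d_\pi:=\Hel(f_{\pi,\sigma_i},f_{\pistar,\sigma_i})$ (the case of polynomially tiny $d_\pi$ is trivial).
\item Since $\E_{f_{\pistar,\sigma_i}}t_\pi\le-d_\pi^2$, AM--GM gives $J_i^{-1/2}d_\pi\log n\le\tfrac12 d_\pi^2+C\log^2 n/J_i$. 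The $\tfrac12 d_\pi^2$ is absorbed, and \eqref{eq:condition-a} holds with $r_{1,i}\lesssim\log^2 n/J_i+1/n$.
\end{itemize}
The squared rate thus comes from trading against the $-\Helsq$ curvature term, not from extra moment cancellation.

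\textbf{Two further points.}
\begin{itemize}
\item Your plan is silent on the second-moment condition \eqref{eq:condition-b}, which controls Bernstein's variance term. A $J_i^{-1/2}$ error there also spoils the rate. The paper handles it by the same argument applied to $t_\pi^2$, using the $k=4$ case of \cref{lem:Vk-hellinger-bound}.
\item A $\chi^2$-type $\delta_i$ with Cauchy--Schwarz would self-normalize in the same way. However, it needs $X_i\mid\theta_i$ to have a density and a $\chi^2$-CLT, and the hypotheses allow lattice micro-data. The Lindeberg--Stein route avoids this because the Gaussian replacements supply the smoothing.
\end{itemize}
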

	\noindent The term $n^{-1}$ recovers (up to log factors) the seminal regret bounds of~\citet{jiang2009general} when normality holds exactly, while the second term is due to the CLT; and reduces to the inverse micro-data sample size $J^{-1}$ when all $J_i$ are equal to $J$. The factor $J^{-1}$ demonstrates that a high-dimensional CLT is not needed here; we just require CLTs to kick in marginally for each $i$ and moreover this only needs to happen \emph{on average} (as seen by the harmonic average of $J_i$ replacing $J$ when there is heterogeneity in $J_i$).

	\subsubsection{Local parameters.} \label{subsubsec:local} We also consider an alternative
	regime in which the micro-data variance remains fixed, that is, $\tau_i^2$ remains bounded
	away from $0$ and $\infty$ for all $i$ and indeed $X_i \approx \Norm(\theta_i,
	\tau_i^2/J_i)$.  Instead, we suppose that in our asymptotics, the distribution $\pistar$
	becomes more concentrated around some $\theta_0$ as the estimates become more precise, so
	that the statistical uncertainty in $X_i$ is on the same order as heterogeneity in the parameters \citep{yang2012bayesian, reimherr2021prior}. We can formalize such an assumption by positing that for some $\theta_0$ and $J$ (such that $J_i/J$ is bounded away from $0$ and $\infty$), we have that $\sqrt{J}(\theta_i - \theta_0) \sim G$ for a prior $G$. In words, $G$ is an asymptotically non-degenerate distribution of the local parameter, similar to usual Le Cam decision theory and outlined in \citet{hirano2023comment}. 
	
	Such a regime seems sensible with $\theta_0 =0$, e.g., if we are estimating average
	treatment effects, and true effects are on the same order as the sampling variance of
	their estimates. Indeed, if the estimates were much more precise than the heterogeneity
	in the underlying effects, we would not expect empirical Bayes methods to provide much
	value anyway, since the estimates would shrink very little; thus focusing on a local
	parametrization is natural.  In this setting, our result reads as follows. For
	simplicity, we center at $\theta_0=0$.
	\begin{thm}
		\label{thm:local}
		Suppose that there exists $J$ such that $J_i/J$ remains bounded away from $0$ and $\infty$ and that $M = \mathcal{O}(1/\sqrt{J})$.
		Suppose that \smash{$\theta_i \simiid \pi_{\star}$} with $\pi_{\star} \in \mathcal{G}\subset \mathcal{P}([-M,M])$ and that $X_i$ can be written as in~\eqref{eq:xi_avg} with all $\varepsilon_{ij}$ jointly independent conditioned on the $\theta_i$. Suppose moreover that $ \Var[\varepsilon_{ij}|\theta_i] = \tau_i^2$ for deterministic $\tau_i^2$ that are bounded away from $0$ and $\infty$ and that 
		$\varepsilon_{ij}$ are uniformly $C$-sub-Gaussian conditioned on $\theta_i$ for a fixed constant $C<\infty$. Then,
		$$
		J\cdot\left(\mathrm{Regret}_n\right)^2_+  = \widetilde{\mathcal{O}}\left( \frac{1}{n} + \frac{1}{n}\sum_{i=1}^n \frac{1}{J_i}\right),
		$$
		where $\widetilde{\mathcal{O}}$ includes polylogarithmic factors.
	\end{thm}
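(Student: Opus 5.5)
The plan is to reduce \cref{thm:local} to the increasing-variance setting of \cref{thm:incvar} by rescaling. Set $\tilde X_i := \sqrt{J}X_i$, $\tilde\theta_i := \sqrt{J}\theta_i$ and $\tilde\sigma_i^2 := J\tau_i^2/J_i$. Then $\tilde\sigma_i^2$ is deterministic and bounded away from $0$ and $\infty$, because $J_i/J$ and $\tau_i^2$ are. The rescaled parameters satisfy $\tilde\theta_i \simiid \tilde\pi_\star$, where $\tilde\pi_\star$ is the pushforward of $\pistar$ under $\theta\mapsto\sqrt J\theta$. Since $M=\mathcal O(1/\sqrt J)$, this prior lies in $\tilde{\mathcal G}\subset\mathcal P([-\tilde M,\tilde M])$ with $\tilde M=\sqrt J M=\mathcal O(1)$. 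Writing
\[
\tilde X_i=\tilde\theta_i+\frac{1}{J_i}\sum_{j=1}^{J_i}\tilde\varepsilon_{ij},\qquad \tilde\varepsilon_{ij}:=\sqrt{J}\,\varepsilon_{ij},
\]
we get $\Var[\tilde\varepsilon_{ij}\mid\tilde\theta_i]=J\tau_i^2=J_i\tilde\sigma_i^2$. Moreover $\tilde\varepsilon_{ij}/\sqrt{J_i}=\sqrt{J/J_i}\,\varepsilon_{ij}$ is uniformly $C'$-sub-Gaussian, where $C'$ depends only on $C$ and the bounds on $J_i/J$. Conditional independence is preserved. So every hypothesis of \cref{thm:incvar} holds for the tilde problem.

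Next I will check that the NPMLE and the plug-in posterior mean are equivariant under this rescaling. Substituting $\theta=\vartheta/\sqrt J$ gives $\phi(X_i-\theta;\sqrt{\tau_i^2/J_i}) = \sqrt J\,\phi(\tilde X_i-\vartheta;\tilde\sigma_i)$. Hence the log-likelihood in \eqref{eq:npmle} for $\pi$ equals the rescaled log-likelihood for the pushforward $\tilde\pi$ plus the constant $\log\sqrt J$. The maximizers therefore correspond, with $\tilde{\mathcal G}$ defined as the pushforward class. Applying the same substitution in \eqref{eq:hat_theta_eb} gives $\tilde{\hat\theta}_i=\sqrt J\hat\theta_i$. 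For the oracle, the normal coupling $Z_i\mid\theta_i\sim N(\theta_i,\tau_i^2/J_i)$ maps to $\tilde Z_i=\sqrt J Z_i\sim N(\tilde\theta_i,\tilde\sigma_i^2)$, so $\E_{\tilde\pi_\star}[\tilde\theta_i\mid\tilde Z_i]=\sqrt J\,\E_{\pistar}[\theta_i\mid Z_i]$. Both square-root risks in \eqref{eq:regret_eq} scale by $\sqrt J$, which gives $\widetilde{\mathrm{Regret}}_n=\sqrt J\,\mathrm{Regret}_n$. Squaring and taking positive parts yields $J(\mathrm{Regret}_n)^2_+=(\widetilde{\mathrm{Regret}}_n)^2_+$, and \cref{thm:incvar} bounds the latter by $\widetilde{\mathcal O}(n^{-1}+n^{-1}\sum_i J_i^{-1})$.

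The main obstacle is making sure the constants hidden in \cref{thm:incvar}'s $\widetilde{\mathcal O}$ depend only on $M$, the variance bounds and the sub-Gaussian constant, and not on the particular class $\mathcal G$ or on $J$. The rescaled problem changes with $n$ (through $J$), so I need uniformity across this triangular array. If the proof of \cref{thm:incvar} uses a metric-entropy bound for the normal mixture class with $M$ fixed, that bound depends only on $\tilde M$, which is bounded here, and on the range of the $\tilde\sigma_i$. I would verify this dependence explicitly when invoking the theorem. Any use of $\mathcal G$ enters only through $\tilde\pi_\star\in\tilde{\mathcal G}$ and through the maximization over $\tilde{\mathcal G}$, both of which are preserved by the pushforward.
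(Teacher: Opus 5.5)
Your proposal is correct and follows the paper's own argument: the paper proves \cref{thm:local} by rescaling by $\sqrt{J}$, so that the parameters lie on a fixed interval, and then invoking the fixed-$M$ theory (\cref{thm:incvar}, formally \cref{thm:main_rates}). It leaves implicit the likelihood and posterior-mean equivariance and the uniformity of constants that you check. Your choice of a common factor $\sqrt{J}$ (as in the paper's footnote, rather than the unit-specific $\sqrt{J_i}$ loosely suggested in the later remark) is the right one, since it keeps the rescaled $\tilde\theta_i$ i.i.d.\ from a single prior and makes the NPMLE exactly equivariant.
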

	Because here we are dealing with local parameters, we rescale the regret by $J$, such that e.g., the MSE of the oracle remains non-trivial. With this rescaling, the conclusion is entirely analogous to the conclusion of Theorem~\ref{thm:increasing-variance}.
	
	Our main theory in the sequel is developed under substantially greater generality:
	\begin{enumerate}
		\item We allow for $\hat{\pi}_n$ to only be an approximate maximizer of the marginal likelihood; this accounts for errors in the optimization and permits for sieve constructions similar to the exponential family sieves of~\citet{efron2016empirical}.
		\item We state our results under very general conditions describing the approximate normality of $X_i$, in particular our results do not merely apply to sample averages.
		\item Finally, although our sharpest results require independence, we also show that the general guarantees are robust to dependence and to estimated variances. Results under dependence are of practical relevance:~\citet{gu2022ranking} fit a Bradley-Terry model with $n$ players, each with latent ability $\theta_i$. They then take  $(X_1,\ldots,X_n)$ to be maximum likelihood estimator of $(\theta_1,\ldots,\theta_n)$ and then apply the NPMLE pretending that ``$X_i \mid \theta_i \sim N(\theta_i, \sigma_i^2)$.''  In such a setting, both independence is violated, and normality only holds approximately.
	\end{enumerate}

	\subsection{Further related work}
	
	Some authors have considered applying empirical Bayes to local parameters as in Theorem~\ref{thm:local}~\citep{sen2000hajek, hansen2016efficient}; however, they consider asymptotics with fixed $n$ and apply James-Stein type shrinkage, rather than the more flexible rules afforded by nonparametric priors.
	\citet{zhong2022empirical} is closest to the results we seek, proving that the empirical Bayes regret converges to zero when normality does not exactly hold (nor independence). Their results, however, are stated in a specialized random matrix theory setup and without rates of convergence. \citet{ghosh2026gaussian} provide a qualitative stability result for the NPMLE under certain stochastic perturbations of the samples.
	
	Our work, like the above, presupposes that the analyst proceeds by positing normality of the summary statistics $X_i$. If one is doubtful about this assumption, one could work with an appropriate context-specific likelihood~\citep{kline2021reasonable}, 
	robustify the NPMLE procedure~\citep{koenker2026robustifying}, or consider likelihood-free alternatives that exploit access to the micro-data~\citep{coey2019improving, deng2021postselection, ignatiadis2023empiricala, kline2025branching}.

	\section{Quantitative convergence rates under independence}\label{sec:main-res}
	
	In this section, we assume that the $X_i$ are independent,
	\begin{equation}
		X_i \simindep \mu_i,
		\label{eq:marginal_measure}
	\end{equation}
	where $\mu_i$ denotes the $i$-th marginal probability measure that is approximately, but
	not exactly, equal to a normal mixture $\Norm(0, \sigma_i^2) \star \pistar$. 
	We assume that the practitioner constructs an estimator $\pihat\in \cP([-M,M])$ which satisfies 
	\begin{equation}\label{eq:likelihood-ineq}
		\frac{1}{n}\sum_{i=1}^n \log\int\phi\left(X_i-\theta;\sigma_i\right)\,d\pihat(\theta) \ge \frac{1}{n}\sum_{i=1}^n \log\int \phi\left(X_i-\theta;{\sigma_i}\right)\,d\pst(\theta) - \frac{q \log{n}}{n},
	\end{equation}
	for some $\pistar\in\cP([-M,M])$  for a fixed $q\ge 0$. This requirement 
	on $\pihat$ is directly satisfied when it is the maximizer of the marginal log-likelihood over all $\pi \in \mathcal{G}$ as in~\eqref{eq:npmle} and $\pistar \in \mathcal{G}$. Allowing a fixed $q>0$ in \eqref{eq:likelihood-ineq} can help account for additional error induced by optimization routines and also allows for sieves, say optimizing over a class $\mathcal{G}$ that does not contain $\pistar$ but contains another prior sufficiently to it.
	
	In this section, we study three properties of $\pihat$ when $X_i \mid \theta_i$ are only approximately normal.
	
	\begin{enumerate}
		\item[(a)] The Hellinger convergence of the observables under a misspecified Gaussian likelihood, i.e., how fast does
		\[
		\frac{1}{n}\sum_{i=1}^{n}\!\left(\cN(0, \sigma_i^2) * \pihat\right) \quad \text{converge to} \quad \frac{1}{n}\sum_{i=1}^{n}\!\left(\cN(0, \sigma_i^2) * \pistar\right)
		\]
		for an appropriate $\pistar$ in the average Hellinger metric? This is addressed in \cref{sec:marginal-convergence}.
		
		\item[(b)] The convergence rate of $\pihat$ to $\pistar$, measured in the Wasserstein distance. This is addressed in \cref{sec:deconvolution}.
		
		\item[(c)] The regret bound for the ``misspecified'' Bayes optimal denoisers in the sense of \eqref{eq:regret_eq}. This is addressed in \cref{sec:denoising}.
	\end{enumerate}	
	
	\subsection{Marginal Hellinger convergence}\label{sec:marginal-convergence}
	For $\pi \in \cP([-M,M])$ and some $\sigma > 0$, 
	\begin{equation}
		f_{\pi,\sigma}(x) := \int \phi\!\left(x - \theta;\sigma\right) d\pi(\theta).
		\label{eq:marginal_sigma}
	\end{equation}
	In words, the above is the marginal density of the normal measurement $Z \mid \theta \sim N(\theta, \sigma^2)$, marginalizing over $\theta \sim \pi$. Next recall the definition of the squared Hellinger distance between two densities $p,q$ on the real line,
	$$
	\Helsq(p,q) := \frac{1}{2}\int \left(\sqrt{p(x)}-\sqrt{q(x)} \right)^2 dx.
	$$
	We first study the rate of convergence of the average squared Hellinger distance
	$$
	\avgn \Helsq\!\left(f_{\pihat, \sigma_i},\, f_{\pistar, \sigma_i}\right).
	$$
	In words, we ask whether $\pihat$---fitted on observations that are only approximately normal---nonetheless recovers, on average, the marginal density $f_{\pistar,\sigma_i}$ that would arise under exact normality.

	A key contribution of this work is identifying an appropriate notion of approximate normality for our theory. We later show that this notion is compatible with  classical CLT-type implications. For the marginal Hellinger analysis, it suffices to impose this notion directly on $\mu_i$ in~\eqref{eq:marginal_measure}, without separating the contributions of $\theta_i$ and $X_i \mid \theta_i$; this separation will, however, be needed for the denoising results of Section~\ref{sec:denoising}. 
	
	For $T \geq 1$, $y \ge 0$, define
	\begin{align}\label{eq:gT}
		g_{T}(y) := \begin{cases} y \log^{2}\!\left(\frac{T}{y}\right) & \mbox{if } y>0, \\ 0 & \mbox{if } y=0.\end{cases}
	\end{align}
	\begin{defi}\label{asn:assumeindep}
		Fix $M > 0$, $T \geq 1$, and constants $c_1, c_2 > 0$. Let $\mu$ be a probability measure on $\R$, let $\sigma > 0$, and let $\pistar \in \cP([-M,M])$. We say that $\mu$ is an $(r_1, r_2)$-approximate normal convolution of $\pistar$ at scale $\sigma$ if $r_1, r_2 \geq 0$ and, for every $\pi \in \cP([-M,M])$,
		\begin{align}\label{eq:condition-a}
			\E_{X \sim \mu} \log \frac{f_{\pi, \sigma}(X)}{f_{\pistar, \sigma}(X)} &\leq c_1\left(-\Helsq(f_{\pi, \sigma}, f_{\pistar, \sigma}) + r_1\right),
		\end{align}
		and
		\begin{align}\label{eq:condition-b}
			\E_{X \sim \mu} \log^2 \frac{f_{\pi, \sigma}(X)}{f_{\pistar, \sigma}(X)} &\leq c_2\left( g_{T}\!\left(\Helsq(f_{\pi, \sigma}, f_{\pistar, \sigma})\right) + r_2\right).
		\end{align}
	\end{defi}
	\noindent In the sequel we suppress the dependence on $M$, $T$, $c_1$, and $c_2$, treating them as fixed throughout our asymptotic analysis.\footnote{In the local-parameter regime of Theorem~\ref{thm:local}, where the statement takes $M = \mathcal{O}(1/\sqrt{J})$, we apply the fixed-$M$
		theory after rescaling by $\sqrt{J}$, under which the rescaled parameters are
		supported on a fixed interval.}
	
	Definition \ref{asn:assumeindep} is stated as a high-level condition. Bounds on the left hand sides of \eqref{eq:condition-a} and \eqref{eq:condition-b} have been studied in the correctly specified (i.e. when $\mu_i=f_{\pst,\sigma_i}$); see e.g.~\cite{wong1995probability,kaji2026hellinger}. The quantities $r_1$ and $r_2$ quantify the price of likelihood misspecification (see \cref{prop:hellinger-bound} below for an example).


	Below, we state two regularity assumptions that we use in our main result. 
	\begin{assume}[Uniform sub-Gaussianity]\label{assump:subgauss}
		There exists $C_1 > 0$, $C_2 > 1$ such that for all $t \geq 1$, we have
		\[
		\max_{1 \leq i \leq n} \Prob\!\left(|X_i| > t\right) \leq C_1 \exp(-C_2 t^2).
		\]
	\end{assume}
	Traditionally sub-Gaussianity (see \citet[Chapter 2.6]{vershynin2018high}) is an assumption on centered random variables. In that light, the above assumption can be viewed as a combination of sub-Gaussianity and a uniform bound on expectations, i.e., $\lim_{n\to\infty}\sup_{1\le i\le n} |\E[X_i]|<\infty$. We also assume the following.
	
	\begin{assume}[Uniform variance bounds]\label{assump:lowerbound}
		There exists $0 < k < K < \infty$ such that
		\[
		k < \min_{1 \leq i \leq n} \sigma_i \leq \max_{1 \leq i \leq n} \sigma_i < K \mbox{~~for all $n \geq 1$. }
		\]
		
	\end{assume}
	This assumption is common for analyzing empirical Bayes procedures in heteroskedastic Gaussian sequence models, see e.g.,~\citet{Jiang2020,Soloff2024}.
	
	Let us now state our main result.
	\begin{thm}\label{thm:main}
		Choose any $\pihat \in \cP([-M,M])$ satisfying~\eqref{eq:likelihood-ineq} for some $\pistar \in \cP([-M,M])$. Suppose Assumptions~\ref{assump:subgauss} and~\ref{assump:lowerbound} hold, and that for each $i \in \{1,\ldots,n\}$ the marginal $\mu_i$ is an $(r_{1,i}, r_{2,i})$-approximate normal convolution of $\pistar$ at scale $\sigma_i$ in the sense of Definition~\ref{asn:assumeindep}. Define
		\[
		\rbar{1} := \avgn r_{1,i}, \quad \rbar{2} := \avgn r_{2,i}, \quad \text{and} \quad 
		\rho_n^2 := \frac{\log^{4}{n}}{n} + \rbar{1} + \rbar{2}.
		\]	
		Then, for all $t > 1$ sufficiently large,
		\[
		\Prob\pr{
			\avgn \Helsq\!\left(f_{\pihat, \sigma_i},\, f_{\pistar, \sigma_i}\right) \ge t^2
			\rho_n^2
		} \le \frac{1}{n^2}, \text{  and  }
		\E\!\left(\avgn \Helsq\!\left(f_{\pihat, \sigma_i},\, f_{\pistar, \sigma_i}\right)\right) \lesssim \rho_n^2.
		\]
	\end{thm}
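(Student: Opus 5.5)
Theorem~\ref{thm:main} will follow a Wong--Shen type likelihood-ratio argument, adapted to misspecification through Definition~\ref{asn:assumeindep}, combined with a covering of the heteroskedastic normal-mixture class in sup-norm on a compact window.

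\textbf{Setup.} Write $\ell_i(\pi) := \log\{f_{\pi,\sigma_i}(X_i)/f_{\pistar,\sigma_i}(X_i)\}$ and $D(\pi) := \avgn \Helsq(f_{\pi,\sigma_i}, f_{\pistar,\sigma_i})$. By \eqref{eq:likelihood-ineq},
\[
\avgn \ell_i(\pihat) \ge -\frac{q\log n}{n}.
\]
Averaging \eqref{eq:condition-a} over $i$ gives
\[
\avgn \E\,\ell_i(\pi) \le c_1\bigl(-D(\pi) + \rbar{1}\bigr)
\]
for every fixed $\pi$. So on the event $D(\pihat)\ge t^2\rho_n^2$, the centered empirical process $\avgn(\ell_i(\pi)-\E\ell_i(\pi))$, evaluated at $\pi=\pihat$, must exceed roughly $c_1 D(\pihat)/2$ once $t$ is large, because $\rbar{1}$ and $q\log n/n$ are both $\le \rho_n^2$. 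It therefore suffices to show that, uniformly over $\{\pi: D(\pi)\ge t^2\rho_n^2\}$, this deviation stays below $c D(\pi)$ with probability at least $1-n^{-2}$.

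\textbf{Discretization.} The log-likelihood ratio is unbounded below, so I will not use it directly. Instead I will work with a truncated or mixed ratio: replace $\pi$ by $(\pi+\pistar)/2$, or use $\log\max(f_\pi,\eta)$ with $\eta = n^{-2}$. Then:
\begin{itemize}
\item Truncate $X_i$ to $|X_i|\le L_n\asymp\sqrt{\log n}$. The discarded event has probability $\le n\,C_1 e^{-C_2 L_n^2}\le n^{-3}$ by Assumption~\ref{assump:subgauss}.
\item Take an $\eta$-net of the Gaussian mixtures in sup-norm on $[-L_n,L_n]$, uniformly in $\sigma\in[k,K]$ (Assumption~\ref{assump:lowerbound}). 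Moment matching, as in \citet{jiang2009general}/Zhang, gives such a net with $\log$ cardinality $\lesssim \log^2 n$, essentially independent of $\sigma_i$. On the truncated window, $f_{\pistar,\sigma_i}\ge e^{-CL_n^2}$, so the net approximation changes each $\ell_i$ by at most $O(n^{-1})$ and changes Hellinger distances negligibly.
\end{itemize}

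\textbf{Tail bound for each net point.} Fix a net point $\pi_j$ with $D(\pi_j)\ge t^2\rho_n^2/2$. The independent summands $\ell_i(\pi_j)$ are bounded above by $O(L_n^2)$ on the window; only the upper tail matters, since we need $\avgn\ell_i$ to be large to contradict. Bernstein's inequality then applies with variance proxy
\[
\avgn \E\,\ell_i^2 \le c_2\Bigl(\avgn g_T(\Helsq_i) + \rbar{2}\Bigr) \le c_2\bigl(g_T(D) + \rbar{2}\bigr),
\]
using \eqref{eq:condition-b} and the concavity of $g_T$ (Jensen). The deviation probability at level $cD$ is at most
\[
\exp\!\left(-\frac{c\,n D^2}{g_T(D)+\rbar{2}+L_n^2D}\right).
\]
Because $D\ge t^2\rho_n^2\ge t^2(\log^4 n/n+\rbar{2})$ and $g_T(D)\lesssim D\log^2 n$, the exponent is at least of order $t^2\log^2 n$. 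This beats the entropy $\log^2 n$ plus the $2\log n$ needed for the target probability $n^{-2}$, once $t$ is large. A union bound, together with peeling over dyadic shells of $D$ (at most $O(\log n)$ shells), completes the high-probability statement.

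\textbf{Expectation bound and main obstacle.} The bound in expectation follows by integrating the tail over $t$, using $D\le 1$ on the residual event of probability $n^{-2}$. The main obstacle is the interaction between the unbounded log-ratio and the misspecified moment conditions: \eqref{eq:condition-a} and \eqref{eq:condition-b} are stated for the raw log-ratio at exact $\pi$, not for net points or truncated ratios. My first attempt will apply Definition~\ref{asn:assumeindep} directly at each net point $\pi_j$, which is legitimate since $\pi_j\in\cP([-M,M])$. The truncation and net error will then be controlled separately, deterministically on the window, via the lower bound on $f_{\pistar,\sigma}$. This is where the $\log^4 n$ factor arises: $L_n^2$ enters once through the Bernstein range term and once through the entropy.
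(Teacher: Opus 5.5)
Your proposal is correct and follows essentially the same route as the paper's proof. The shared steps are: sub-Gaussian truncation to a window of half-width $\asymp\sqrt{\log n}$; a sup-norm net of the heteroskedastic mixture class with log-cardinality $\lesssim\log^2 n$ (via \citet{Jiang2020}); applying \eqref{eq:condition-a}--\eqref{eq:condition-b} at net points that are genuine elements of $\cP([-M,M])$; Bernstein's inequality with the variance controlled by concavity of $g_T$; and a union bound over dyadic Hellinger shells.

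A few details need tightening. The net radius must be $n^{-s}$ for a large enough $s$, because your $n^{-2}$ need not beat the $n^{-C}$ lower bound on the mixture densities over the window; the paper handles this with $\eta=n^{-s^2}$ and \cref{lem:sup-log-bound}. The mean shift from truncating the summands needs a tail integral using \cref{lem:log-mixture-bound} and \cref{assump:subgauss}; it is not controlled deterministically on the window. Finally, the $\log^4 n$ comes from the $\log^2 n$ factor in $g_T(D)$ combined with the $\log^2 n$ entropy, not from the Bernstein range term, which would only force $\log^3 n$.
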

	
	As mentioned earlier, $\rbar{1}$ and $\rbar{2}$ aim to capture the gap between $\mu_i$ and $\cN(0, \sigma_i^2) * \pistar$ \emph{averaged} across $1\le i\le n$. The fact that these quantities are averaged across observations obviates the need to develop high-dimensional central limit theory, since $r_i$ captures approximate normality of the \emph{marginal} distribution  of $X_i$. 
	
	To preview, in the context of $X_i$ as the sample averages in \eqref{eq:xi_avg}, we will show in \cref{sec:sample-avg} that 
	$$\rbar{1} + \rbar{2}\lesssim \frac{1}{n}\sum_{i=1}^n J_i^{-1},$$
	up to logarithmic factors. Having $\rbar{j}$s as high-level terms is appealing for showing automatic adaptation to the degree of approximate normality. In particular, if the $\varepsilon_{ij}$s in \eqref{eq:xi_avg} are symmetric and light-tailed, we will show that 
	$$\rbar{1}+ \rbar{2}\lesssim \frac{1}{n}\sum_{i=1}^n J_i^{-2}$$
	up to logarithmic factors. Finally, if $\mu_i = \Norm(0,\sigma_i^2) \star \pistar$ is a normal mixture to start with, then $\bar r_1 =\bar r_2 = 0$. 
	The additional $n^{-1}$ term in \cref{thm:main} is the standard parametric rate up to logarithmic factors.  
	
	Therefore \cref{thm:main} recovers the standard parametric rate (up to log factors) in the correctly specified case as in~\citet{ghosal2001entropies}, \cite{zhang2009generalized} and \cite{Jiang2020}.   
	
	When $\mu_i$ admits a Lebesgue density, we can upper bound  $r_{1,i}$ and $r_{2,i}$ as follows.
	\begin{prop}\label{prop:hellinger-bound} Suppose Assumptions~\ref{assump:subgauss} and~\ref{assump:lowerbound} hold, and that $\mu_i$ admits a Lebesgue density. Then, for some constant $T \geq 1$ independent of $i$, the measure $\mu_i$ is an $(r_{1,i}, r_{2,i})$-approximate normal convolution of $\pistar$ at scale $\sigma_i$ in the sense of Definition~\ref{asn:assumeindep}, with \begin{align} r_{1,i} = r_{2,i} = g_{T}\!\left(\Helsq(\mu_i, f_{\pistar,\sigma_i})\right). \label{eq:r-hel} \end{align} In the well-specified case $\mu_i = f_{\pistar, \sigma_i}$, we have $r_{1,i} = r_{2,i} = 0$. 
	\end{prop}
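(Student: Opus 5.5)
Write $p = f_{\pi,\sigma_i}$, $p_\star = f_{\pistar,\sigma_i}$, $\mu=\mu_i$ (a density), and $\ell = \log(p/p_\star)$. The strategy is to compare every expectation under $\mu$ with the same expectation under $p_\star$, where the well-specified bounds are classical. The difference is then controlled by a truncated Hellinger-type argument. Under $p_\star$ we have the classical facts
\[
\E_{p_\star}\ell = -\KL(p_\star,p) \le -2\Helsq(p,p_\star)
\qquad\text{and}\qquad
\E_{p_\star}\ell^2 \lesssim g_T(\Helsq(p,p_\star)).
\]
The second is the Wong--Shen type bound for log-likelihood ratios when the likelihood ratio has controlled tails. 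It holds here because Gaussian mixtures with compactly supported mixing and $\sigma_i\in(k,K)$ satisfy
\[
|\ell(x)| \lesssim 1 + M|x|/k^2
\]
uniformly in $\pi$, since the ratio of $\phi(x-\theta)$ for two $\theta\in[-M,M]$ is bounded by $e^{2M|x|/\sigma^2}$. In particular $\ell$ is at most linear in $|x|$ with constants independent of $i$ and $\pi$. This linear envelope, together with Assumption~\ref{assump:subgauss} for $\mu$ and Gaussian tails for $p_\star$, is the key input. The well-specified claim is immediate, since $g_T(0)=0$ and then only the classical bounds remain.

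Next I bound the change of measure. For any $h$,
\[
\Bigl|\int h\,(d\mu - p_\star)\Bigr| \le \int |h|\,\bigl|\sqrt{\mu}-\sqrt{p_\star}\bigr|\bigl(\sqrt{\mu}+\sqrt{p_\star}\bigr).
\]
By Cauchy--Schwarz this is at most
\[
\sqrt{2}\,\Hel(\mu,p_\star)\,\Bigl(\int h^2 (\sqrt{\mu}+\sqrt{p_\star})^2\Bigr)^{1/2}.
\]
Used naively with $h=\ell$, this gives a bound of order $\Hel(\mu,p_\star)$, not $\Helsq\log^2$, so I truncate. Split $\R$ into $|x|\le L$ and $|x|>L$ with
\[
L \asymp \sqrt{\log(T/\Helsq(\mu,p_\star))}.
\]
On the tail, the linear envelope and sub-Gaussianity of both $\mu$ and $p_\star$ give a contribution $\lesssim L^{2}e^{-cL^2}\lesssim \Helsq(\mu,p_\star)$, provided $T$ is a large constant.

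On the bulk, for \eqref{eq:condition-b} with $h=\ell^2$, I write
\[
\int_{|x|\le L}\ell^2\,d\mu \le 2\int\ell^2 p_\star + 2\int_{|x|\le L}\ell^2(\sqrt\mu-\sqrt{p_\star})^2.
\]
This uses $\mu \le 2p_\star + 2(\sqrt\mu-\sqrt{p_\star})^2$. The last term is at most
\[
\sup_{|x|\le L}\ell^2\cdot 2\Helsq(\mu,p_\star) \lesssim (1+L)^2\,\Helsq(\mu,p_\star),
\]
which is $\lesssim g_T(\Helsq(\mu,p_\star))$. So \eqref{eq:condition-b} follows with the factor-$2$ constant absorbed into $c_2$.

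For \eqref{eq:condition-a} I want the leading term $-\Helsq(p,p_\star)$, not a doubled one, so I handle the upper bound on $\ell$ directly. The cleanest route: on $|x|\le L$, use
\[
\int \ell\,d\mu \le \int \ell\, p_\star + \int_{|x|\le L} |\ell|\,|\mu-p_\star| + \text{tail}.
\]
Then bound
\[
\int|\ell||\mu-p_\star| \le \sqrt2\,\Hel(\mu,p_\star)\Bigl(\int \ell^2(\sqrt\mu+\sqrt{p_\star})^2\Bigr)^{1/2}.
\]
The last factor is $\lesssim \bigl(g_T(\Helsq(p,p_\star)) + (1+L)^2\Helsq(\mu,p_\star)\bigr)^{1/2}$ by the previous step. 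AM--GM then gives
\[
\Hel(\mu,p_\star)\sqrt{g_T(\Helsq(p,p_\star))} \le \epsilon\, \Helsq(p,p_\star)\log^2(T/\Helsq(p,p_\star)) + \epsilon^{-1}\Helsq(\mu,p_\star).
\]

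The main obstacle is that this AM--GM leaves a log factor on $\Helsq(p,p_\star)$, which cannot be absorbed by $-2\Helsq(p,p_\star)$. I would fix this with a sharper bound on the $p_\star$ side. Using $\log u \le 2(\sqrt u-1)$,
\[
\int \ell\,d\mu \le 2\int(\sqrt{p/p_\star}-1)\,d\mu.
\]
Since $\sqrt{p/p_\star}-1$ is bounded on $|x|\le L$ by $e^{O(L)}$ and we can choose $L$ small enough in log scale, we get
\[
2\int(\sqrt{p/p_\star}-1)\,p_\star = -2\Helsq(p,p_\star),
\]
plus a cross term $2\int(\sqrt{p/p_\star}-1)(\mu-p_\star)$. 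Cauchy--Schwarz bounds the cross term by
\[
\Hel(\mu,p_\star)\Bigl(\int(\sqrt p-\sqrt{p_\star})^2\,\bigl(\sqrt{\mu/p_\star}+1\bigr)^2\Bigr)^{1/2}.
\]
On the bulk, the weight $\mu/p_\star$ is controlled by $e^{O(L^2)}$, which is polynomial in $1/\Helsq(\mu,p_\star)$. This is the delicate point: I would instead split using $\mu\le 2p_\star+2(\sqrt\mu-\sqrt{p_\star})^2$ again, bounding
\[
\int (\sqrt p-\sqrt{p_\star})^2\mu/p_\star
\]
via $(\sqrt{p/p_\star}-1)^2\le e^{O(L)}$ times $\Helsq(\mu,p_\star)$. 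Then AM--GM yields
\[
\le \Helsq(p,p_\star) + C\,\Helsq(\mu,p_\star)\,e^{O(L)}.
\]
With $L$ of order $\sqrt{\log}$, the factor $e^{O(L)}$ is subpolynomial, and I expect to need a careful choice of $L$ (possibly $L\asymp \log\log$ scale for this term, paying a separately handled tail) to get the claimed $g_T$ rate. Combining gives \eqref{eq:condition-a} with $c_1$ absolute and $r_{1,i}=g_T(\Helsq(\mu_i,f_{\pistar,\sigma_i}))$.
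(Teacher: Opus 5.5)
Your argument for \eqref{eq:condition-b} is sound, and it takes a more elementary route than the paper. The paper bounds $\E_\mu\ell^2$ by splitting into four regions of the likelihood ratio, controlled by the moment $T_\delta$ (\cref{lem:reweighted-Vk-bound}), and then uses subadditivity of $g_T$. You instead truncate spatially at $L\asymp\sqrt{\log(T/\Helsq(\mu,p_\star))}$ and combine $\mu\le 2p_\star+2(\sqrt\mu-\sqrt{p_\star})^2$ with the linear envelope; this even yields the misspecification term with a single logarithm. The gap is in \eqref{eq:condition-a}. Write $a:=\Hel(\mu,p_\star)$ and $b:=\Hel(p,p_\star)$.

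Your first route is essentially the paper's (\cref{lem:reweighted-KL-hellinger}). Up to a harmless $O(a^2)$ tail, it correctly gives $\E_\mu\ell\le -2b^2+C\,a\sqrt{g_T(a^2)+g_T(b^2)}$. You then abandon it, and the replacement does not reach the claimed rate. After $\log u\le 2(\sqrt u-1)$, the cross term carries the weight $(\sqrt{p/p_\star}-1)^2$. This weight can be as large as $e^{cL}$ on $\{|x|\le L\}$ and grows exponentially beyond it. Making the sub-Gaussian tail $O(a^2)$ forces $L\gtrsim\sqrt{\log(1/a^2)}$, so at best you obtain $r_1\asymp a^2\exp\{O(\sqrt{\log(T/a^2)})\}$, which is not $O(g_T(a^2))$. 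A $\log\log$-scale truncation does not help: tail pieces such as $\int_{|x|>L}e^{O(|x|)}\,d\mu$ are then of order $\exp\{-c(\log\log(1/a^2))^2\}\gg a^2$. So, as written, \eqref{eq:condition-a} with $r_{1,i}=g_T(\Helsq(\mu_i,f_{\pistar,\sigma_i}))$ is not established.

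The missing idea is a case split, not a new estimate; it is the paper's \cref{lem:mixed-entropy-ineq}.
\begin{itemize}
\item If $g_T(b^2)\le g_T(a^2)$, then $C\,a\sqrt{g_T(a^2)+g_T(b^2)}\le\sqrt2\,C\,a^2\log(T/a^2)\le\sqrt2\,C\,g_T(a^2)$, and no AM--GM is needed.
\item If $g_T(b^2)>g_T(a^2)$, then $b>a$, because $g_T$ is increasing on $(0,1)$ once $T>e^2$. Hence $\log(T/b^2)<\log(T/a^2)$. Now apply Young's inequality with the logarithm attached to the $a$-factor rather than the $b$-factor: $\sqrt2\,C\,a\,b\log(T/b^2)\le b^2+\tfrac{C^2}{2}\,a^2\log^2(T/b^2)\le b^2+\tfrac{C^2}{2}\,g_T(a^2)$.
\end{itemize}
The $b^2$ is absorbed by $-2b^2$, leaving $\E_\mu\ell\le -b^2+C'g_T(a^2)$, which is \eqref{eq:condition-a}. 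Your AM--GM failed only because it placed $\log^2(T/b^2)$ on $b^2$, where $-2b^2$ cannot absorb it. The case split shows the logarithm can always be charged to $a$ instead.
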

	Combined with the data-processing inequality and results of~\citet{austern2024bounding}, the above proposition yields concrete rates for Theorem~\ref{thm:main} in specific settings. Although the proposition assumes that $\mu_i$ has a Lebesgue density, this is not required in general; Section~\ref{sec:sample-avg} below instantiates the results for sample averages, including cases where no such density exists.

	We briefly contrast the conclusion of Theorem~\ref{thm:main} with other results on nonparametric maximum likelihood under misspecification. \citet{patilea2001convex} defines the modified squared Hellinger distance,
	$$
	\Helsq_0(p,q;\mu) := \frac{1}{2} \int \left( \sqrt{\frac{p(x)}{q(x)}} - 1\right)^2 \mu(dx). 
	$$
	Notice that when $\mu$ is the measure with density $q$, then $\Helsq_0(p,q;\mu) = \Helsq(p,q)$, but in general they are different.
	Using techniques from \citet{patilea2001convex} and~\citet{geer2000empirical}, we could seek to control $\Helsq_0(f_{\pihat, \sigma_i},\, f_{\pistar, \sigma_i}; \mu_i)$. In an empirical Bayes setting, such a strategy is pursued in~\citet{kim2026empirical}. However, it is unclear  how one would translate rates on this modified Hellinger distance to rates on denoising regret; the main objective of this work.

	\subsection{Deconvolution Rate}\label{sec:deconvolution}
	In this section, we study the convergence of any $\pihat$ satisfying~\eqref{eq:likelihood-ineq} to $\pistar$ in the Wasserstein distance.
	
	\begin{defi}\label{def:wasserstein}
		
		For probability measures $Q_1, Q_2$ on the real line with finite $p$-th moments, $p \geq 1$, the \emph{Wasserstein-$p$ distance} between them is defined as
		\[
		W_p(Q_1, Q_2) := \left( \inf_{\gamma \in \Gamma(Q_1, Q_2)} \int |x - y|^p \, d\gamma(x, y) \right)^{1/p},
		\]
		where $\Gamma(Q_1, Q_2)$ denotes the set of couplings of $Q_1$ and $Q_2$, i.e., probability measures on $\R^2$ with marginals $Q_1$ and $Q_2$.
	\end{defi}
	
	\noindent In the main result of this section, we study the convergence of $\pihat$ to $\pistar$ under $\Wass$. 
	
	\begin{thm}[Deconvolution]\label{thm:deconvolution}
		Suppose the assumptions required for \cref{thm:main} hold. Then
		\[
		\E\!\left[\Wass(\pihat, \pistar)\right] \lesssim \big(1+\log(1+\rho_n^{-1})\big)^{-1/2}.
		\]
	\end{thm}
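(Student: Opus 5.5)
The plan is to reduce the Wasserstein bound to the average Hellinger bound of \cref{thm:main}, using the standard Fourier-analytic deconvolution argument for Gaussian mixtures with compactly supported mixing measures (as in Nguyen, or Soloff et al.\ for the heteroskedastic case). All quantities involved are bounded: since $\pihat,\pistar\in\cP([-M,M])$, we always have $\Wass(\pihat,\pistar)\le 2M$. It therefore suffices to show two things. First, on the high-probability event of \cref{thm:main}, $\Wass(\pihat,\pistar)\lesssim (1+\log(1+\rho_n^{-1}))^{-1/2}$ deterministically. Second, the complement event, which has probability at most $n^{-2}$, contributes at most $2M n^{-2}$, and this is negligible because $\rho_n^2\ge \log^4 n/n$ forces $\log(1/\rho_n)\lesssim \log n$.

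The key deterministic step is a comparison inequality: for $\pi,\pi'\in\cP([-M,M])$ and $\sigma\in(k,K)$,
\[
\Wass(\pi,\pi')\lesssim \bigl(1+\log(1+ h^{-1})\bigr)^{-1/2},\qquad h:=\Hel(f_{\pi,\sigma},f_{\pi',\sigma}).
\]
I would prove this as follows.
\begin{enumerate}[(i)]
\item Bound $\Wass$ on the compact interval by $W_1$, using $W_2^2\le 2M W_1$; alternatively, work directly with a smoothing argument.
\item Convolve both measures with a band-limited kernel $K_\epsilon$ whose Fourier transform is supported in $[-1/\epsilon,1/\epsilon]$. This costs $\epsilon$ in Wasserstein distance.
\item Control the smoothed difference through characteristic functions. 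Since $\hat f_{\pi,\sigma}(\omega)=\hat\pi(\omega)e^{-\sigma^2\omega^2/2}$, we get $|\hat\pi-\hat\pi'|(\omega)\le e^{\sigma^2\omega^2/2}\,|\hat f_{\pi,\sigma}-\hat f_{\pi',\sigma}|(\omega)$.
\item Bound the remaining factor by $\|f_{\pi,\sigma}-f_{\pi',\sigma}\|_{L^1}\lesssim h$.
\end{enumerate}
Optimizing then gives $\Wass\lesssim \epsilon + e^{c/\epsilon^2}h$. Choosing $\epsilon\asymp (\log(1/h))^{-1/2}$ yields the displayed inequality. The restriction $\sigma\le K$ from \cref{assump:lowerbound} keeps the constant $c$ uniform in $i$.

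The main obstacle is that \cref{thm:main} controls only the \emph{average} Hellinger distance over heterogeneous $\sigma_i$, not the distance at a single $\sigma$. I would handle this in one of two ways. The first option is to pick an index $i^\ast$ with $\Helsq(f_{\pihat,\sigma_{i^\ast}},f_{\pistar,\sigma_{i^\ast}})\le \avgn\Helsq(\cdot)$; such an index exists because the minimum is at most the average. Applying the comparison at $\sigma_{i^\ast}\in(k,K)$ then gives the rate with $h\le t\rho_n$, since the comparison constant is uniform over $\sigma\in(k,K)$.

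The second, more robust option is to use that Gaussian smoothing contracts Hellinger distance. For $\sigma\le\sigma'$, $f_{\pi,\sigma'}=f_{\pi,\sigma}\star N(0,\sigma'^2-\sigma^2)$, so by data processing $\Hel(f_{\pi,\sigma'},f_{\pi',\sigma'})\le\Hel(f_{\pi,\sigma},f_{\pi',\sigma})$. Every index's distance therefore dominates the distance at the common scale $K$, and hence the average does too. The comparison inequality can then be applied once at $\sigma=K$.

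Finally, since $\log(1+(t\rho_n)^{-1})\asymp\log(1+\rho_n^{-1})$ for fixed $t$, substituting $h\lesssim t\rho_n$ gives the bound on the good event. Combining this with the tail contribution bounded above yields the claimed expectation bound.
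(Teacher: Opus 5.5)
Your overall structure matches the paper's proof. The paper cites \citet[Theorem 2]{Nguyen2013} for the deterministic bound $\Wass^2(\pihat,\pistar)\le C\bigl(-\log\TV(f_{\pihat,\sigma_i},f_{\pistar,\sigma_i})\bigr)^{-1}$ at every $i$ and uses $\TV\le\sqrt2\,\Hel$. It then averages over $i$, which is equivalent to your minimizing-index option, and splits the expectation over the event of \cref{thm:main} and its complement, as you propose. Your data-processing alternative, which contracts every $\sigma_i<K$ to the common scale $K$, is also correct. It is slightly cleaner, since the comparison constant then only needs to hold at one scale rather than uniformly in $\sigma$.

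\textbf{The gap: step (i) loses a square root.} The Fourier argument naturally gives $W_1(\pi,\pi')\lesssim(\log(1/h))^{-1/2}$. Combined with $\Wass^2\le 2MW_1$, this only yields $\Wass\lesssim(\log(1/h))^{-1/4}$, the square root of the claimed rate. No general inequality recovers this loss from $W_1$ alone: moving mass $p$ a distance $2M$ gives $W_1=2Mp$ but $\Wass=2M\sqrt p$.

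\textbf{The fix.} You must take your ``alternative'' route and bound the \emph{squared} distance of the smoothed measures linearly in the Fourier error. Concretely:
\begin{enumerate}[(a)]
\item Take a nonnegative band-limited probability kernel with finite second moment, e.g.\ a rescaled $(\sin x/x)^4$ kernel, so that smoothing costs $O(\epsilon)$ in $\Wass$.
\item Use $\Wass^2(\mu,\nu)\le 2\int x^2\,d|\mu-\nu|(x)$.
\item On $[-2M,2M]$, apply Cauchy--Schwarz and Plancherel: the smoothed densities $g,g'$ satisfy $\|g-g'\|_{L^2}\lesssim\epsilon^{-1/2}e^{K^2/(2\epsilon^2)}h$.
\item Outside $[-2M,2M]$, the kernel tails contribute $O(\epsilon^3)$.
\end{enumerate}
This gives $\Wass^2(\pi,\pi')\lesssim\epsilon^2+\epsilon^{-1/2}e^{K^2/(2\epsilon^2)}h$, and choosing $\epsilon^2=K^2/\log(1/h)$ yields $\Wass\lesssim(\log(1/h))^{-1/2}$.

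Your display $\Wass\lesssim\epsilon+e^{c/\epsilon^2}h$ also skips this Fourier-to-Wasserstein conversion without comment, and that conversion is where the real work in Nguyen's theorem lies. With this fix, or by simply citing Nguyen as the paper does, the rest of your proof goes through.
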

	
	\noindent The logarithmic rate for the $\Wass$ distance is typical in deconvolution problems; we refer the reader to~\citet{Dedecker2013} for minimax lower bounds that are logarithmic in $n$ even in the well-specified case. \citet[Theorem 10]{Soloff2024} establish an upper bound (logarithmic in $n$) on the $W_2$ distance between the NPMLE and $\pistar$ under correct specification. Our proof closely follows~\citet{Nguyen2013}. 
	
	
	\subsection{Regret bounds for denoising}\label{sec:denoising}

	A primary output of EB procedures is the denoised estimate of each unit's latent parameter $\theta_i$. We now formulate the denoising regret~\citep{jiang2009general} under approximate normality, generalizing the setting of the introduction. Suppose $\theta_i \simiid \pistar \in \cP([-M,M])$, $(X_i,\theta_i)$ are coupled for each $i$, and $(X_i,\theta_i)$ are independent for $i=1,\ldots,n$. The conditional law of $X_i$ given $\theta_i$ need not be normal. This generalizes the sample-average model~\eqref{eq:xi_avg} by dropping the additive-noise structure. For any prior $\pi \in \cP([-M,M])$, define the \emph{normal posterior mean}
	\begin{align}\label{eq:postmean}
		h_\pi(x; \sigma) := \frac{\int \theta \,\phi(x - \theta;\sigma)\,d\pi(\theta)}{f_{\pi,\sigma}(x)}.
	\end{align}
	Given $\pihat$ satisfying~\eqref{eq:likelihood-ineq}, the analyst proceeds as if $X_i\mid \theta_i \sim \cN(\theta_i,\sigma_i^2)$ and estimates $\theta_i$ by
	\[
	\hat{\theta}_i := h_{\pihat}(X_i; \sigma_i).
	\]
	
	\paragraph{Regret against the normal oracle.}
	As in the introduction, our primary benchmark is the \emph{normal oracle} that knows the true prior $\pistar$ and observes a matched normal sequence
	\begin{equation}\label{eq:equivGauss}
		Z_i := \theta_i + \varepsilon_i, \qquad \varepsilon_i \sim \cN(0,\sigma_i^2),
	\end{equation}
	with $\varepsilon_1,\ldots,\varepsilon_n$ independent of each other and of $(\theta_1,\ldots,\theta_n)$. Since $Z_i\mid \theta_i$ is exactly normal, the oracle's posterior mean coincides with the normal posterior mean evaluated at the true prior,
	\[
	\E_{\pistar}[\theta_i\mid Z_i] = h_{\pistar}(Z_i;\sigma_i).
	\]
	The denoising regret relative to this oracle, matching the regret notion of~\eqref{eq:regret_eq}, is
	\begin{equation}\label{eq:regret-z}
		\mathrm{Regret}_n := \sqrt{\avgn \E\bigl|\theta_i - h_{\pihat}(X_i;\sigma_i)\bigr|^2} - \sqrt{\avgn \E\bigl|\theta_i - h_{\pistar}(Z_i;\sigma_i)\bigr|^2}.
	\end{equation}
	To quantify the joint cost of estimating $\pistar$ by $\pihat$ from approximately-normal data and evaluating a normal posterior mean on them, we use a Wasserstein-2 discrepancy between the true conditional law of $X_i$ given $\theta_i$ and the normal working model,
	\[
	\mathcal{W}_n^2 := \avgn \E_{\theta_i \sim \pistar}\!\left[\Wtwo^2\!\left(\mathrm{Law}(X_i \mid \theta_i),\, \cN(\theta_i, \sigma_i^2)\right)\right],
	\]
	where $\Wtwo$ denotes the Wasserstein-2 distance. At an intuitive level, we would expect our normal approximation at the very least to match the first two moments of $X_i$ given $\theta_i$, i.e., we would expect that $\E[X_i \mid \theta_i] \approx \theta_i$ and that $\Var(X_i \mid \theta_i) \approx \sigma_i^2$. The quality of these approximations is indeed controlled by $\mathcal{W}_n^2$:
	$$
	\frac{1}{n}\sum_{i=1}^n \E_{\theta_i \sim \pistar}\!\left[ \left(\E[X_i \mid \theta_i] - \theta_i\right)^2\right] \leq \mathcal{W}_n^2,\;\;\; \frac{1}{n}\sum_{i=1}^n \E_{\theta_i \sim \pistar}\!\left[ \left(\Var[X_i \mid \theta_i]^{1/2} - \sigma_i\right)^2\right] \leq \mathcal{W}_n^2.
	$$
	Our main regret result is as follows.
	
	\begin{thm}[Denoising regret against the normal oracle]\label{thm:denoising-normal}
		Suppose the conditions of \cref{thm:main} hold for some $\pistar\in \cP([-M,M])$. Then
		\[
		\left(\mathrm{Regret}_n\right)_+^2 \;\lesssim\; \rho_n^2 \log^3 n \;+\; \mathcal{W}_n^2.
		\]
	\end{thm}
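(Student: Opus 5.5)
The plan is to reduce the regret to a distance between denoisers, split it into a coupling (CLT) term and an estimation term, and then invoke \cref{thm:main}.

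\textbf{Reduction.} Write $\|V\|_n := (\avgn \E|V_i|^2)^{1/2}$ for an array $V=(V_i)$. By Minkowski's inequality,
\[
\mathrm{Regret}_n \le \bigl\| h_{\pihat}(X_i;\sigma_i) - h_{\pistar}(Z_i;\sigma_i)\bigr\|_n .
\]
The oracle risk depends only on the joint law of $(\theta_i,Z_i)$. So I am free to realize $Z_i$ on the same space as $X_i$. Conditionally on $\theta_i$, I take $(X_i,Z_i)$ to be a $\Wtwo$-optimal coupling of $\mathrm{Law}(X_i\mid\theta_i)$ and $\cN(\theta_i,\sigma_i^2)$, independently across $i$. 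Then $Z_i-\theta_i\sim\cN(0,\sigma_i^2)$ is independent of $\theta_i$, as \eqref{eq:equivGauss} requires, and $\avgn\E|X_i-Z_i|^2=\mathcal{W}_n^2$.

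\textbf{Split.} Next I split
\[
h_{\pihat}(X_i)-h_{\pistar}(Z_i) = \bigl[h_{\pihat}(X_i)-h_{\pihat}(Z_i)\bigr] + \bigl[h_{\pihat}(Z_i)-h_{\pistar}(Z_i)\bigr].
\]
For the first bracket I use that $\partial_x h_\pi(x;\sigma)=\Var_\pi(\theta\mid x)/\sigma^2\le 4M^2/k^2$ for every $\pi\in\cP([-M,M])$, by \cref{assump:lowerbound}. This Lipschitz constant is uniform in $\pi$, so it applies even though $\pihat$ is data-dependent. The first bracket therefore contributes $\lesssim \mathcal{W}_n^2$.

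\textbf{Estimation term, deterministic part.} The second bracket is the heart of the argument. I would first prove a deterministic inequality: for every $\pi\in\cP([-M,M])$,
\[
\avgn \int |h_\pi(x;\sigma_i)-h_{\pistar}(x;\sigma_i)|^2 f_{\pistar,\sigma_i}(x)\,dx \;\lesssim\; \max\Bigl\{\avgn\Helsq(f_{\pi,\sigma_i},f_{\pistar,\sigma_i}),\,\tfrac1n\Bigr\}\,\log^3 n .
\]
The route follows \citet{jiang2009general} and \citet{Soloff2024}.
\begin{enumerate}
\item Use Tweedie's formula $h_\pi = x+\sigma^2 f_{\pi,\sigma}'/f_{\pi,\sigma}$.
\item Truncate to $|x|\le L\asymp\sqrt{\log n}$, using sub-Gaussian tails of $f_{\pistar,\sigma_i}$ together with $|h|\le M$.
\item Regularize the denominator at level $f\vee n^{-c}$; where $f_{\pistar,\sigma_i}<n^{-c}$ the contribution is negligible because $|h|\le M$.
\item Bound $\int (f_\pi'-f_{\pistar}')^2/(f_{\pistar}\vee n^{-c})$ by a Hellinger distance times powers of $\log n$. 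For this I use the smoothness of Gaussian mixtures, $|f'|\lesssim f\sqrt{\log(1/f)}$, and the elementary bound $(\sqrt a-\sqrt b)^2$ versus $(a-b)^2/(a\vee b)$.
\end{enumerate}
Averaging over $i$ is harmless because all constants are uniform in $\sigma_i\in(k,K)$.

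\textbf{Estimation term, removing the dependence.} $\pihat$ depends on $X_i$ and hence on $Z_i$, so I cannot directly plug $\pi=\pihat$ into the deterministic bound. I would handle this with a finite net $\{\pi_j\}_{j\le N}$ of $\cP([-M,M])$ that matches moments up to order $\asymp\log n$. Then $\sup_{|x|\le L}|h_\pi-h_{\pi_j}|\le n^{-1}$ for the nearest $\pi_j$, with $\log N\lesssim\log^2 n$. Writing $g_i(\pi):=|h_\pi(Z_i)-h_{\pistar}(Z_i)|^2\mathbf 1\{|Z_i|\le L\}$, I bound
\[
\avgn g_i(\pihat) \;\le\; \avgn \E_{Z} g_i(\pi_{j(\pihat)}) \;+\; \max_{j\le N}\Bigl|\avgn\bigl(g_i(\pi_j)-\E g_i(\pi_j)\bigr)\Bigr| \;+\; O(n^{-2}).
\]
Each $g_i$ is bounded by $4M^2$, so a Bernstein bound plus a union bound over the net gives the maximum of order $\sqrt{\log N/n}\cdot(\text{variance})^{1/2}+\log N/n$. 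A variance-sensitive (peeling/Bernstein) version is needed here so that this stays $\lesssim \rho_n^2\log^3 n$. The first term is handled by the deterministic inequality together with \cref{thm:main}: I use its expectation bound, and its tail bound $n^{-2}$ on the bad event, where everything is bounded by $4M^2$.

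\textbf{Conclusion and main obstacle.} Combining the two brackets via $(a+b)^2\le2a^2+2b^2$ gives $(\mathrm{Regret}_n)_+^2\lesssim\rho_n^2\log^3 n+\mathcal{W}_n^2$. I expect the main obstacle to be the deterministic Hellinger-to-posterior-mean inequality with only the $\log^3 n$ loss, uniformly across heteroskedastic $\sigma_i$. I would first try to adapt Theorem~3 of \citet{jiang2009general} scale by scale. The decoupling step needs care as well, but is more standard.
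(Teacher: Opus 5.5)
Your proposal is correct and follows essentially the same route as the paper. Both arguments use the same triangle-inequality reduction, the same conditionally $\Wtwo$-optimal coupling of $X_i$ with $Z_i$ together with the uniform Lipschitz bound $\partial_x h_\pi = \Var_\pi(\theta\mid x)/\sigma^2$ for the coupling term, and the Jiang--Zhang/Soloff et al.\ machinery combined with \cref{thm:main} for the estimation term. The only difference is presentational: the paper introduces the regularized rule $h_{\pi,\tau_n}$, shows it agrees with $h_\pi$ up to an $O(1/n)$ error, and cites the argument of \citet[Theorem~9]{Soloff2024} for the regularized regret, whereas you unpack that cited argument yourself (deterministic Hellinger-to-posterior-mean inequality, a finite net, and a variance-sensitive Bernstein bound).
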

	
	\noindent The first term $\rho_n^2 \log^{3} n$ recovers (up to logarithmic factors) the oracle inequality of~\citet{jiang2009general} in the well-specified case, while the second term $\mathcal{W}_n^2$ is the price for evaluating a normal posterior mean on data that are only approximately normal. The second term vanishes in the well-specified normal model. Theorems~\ref{thm:increasing-variance} and~\ref{thm:local} follow by combining this bound with quantitative CLTs that control $\mathcal{W}_n$; see Section~\ref{sec:sample-avg}.
	
	\paragraph{Regret against the true posterior mean given $X_i$.}
	Theorem~\ref{thm:denoising-normal} compares $\hat{\theta}_i$ to an oracle that itself proceeds with matched normal data $Z_i$. One can also ask how $\hat{\theta}_i$ compares to the \emph{true} Bayes-optimal denoiser given the actual summary statistic $X_i$,
	\[
	\delta_i^{\star}(x) := \E[\theta_i \mid X_i = x],
	\]
	which can use any remaining non-normal structure in the law of $X_i\mid \theta_i$. The corresponding regret is
	\[
	\mathrm{Regret}_n^{\star} := \sqrt{\avgn \E\bigl|\theta_i - h_{\pihat}(X_i;\sigma_i)\bigr|^2} - \sqrt{\avgn \E\bigl|\theta_i - \delta_i^{\star}(X_i)\bigr|^2}.
	\]
	To bound $\mathrm{Regret}_n^{\star}$, we assume that $X_i|\theta_i$ admits a conditional Lebesgue density, say $\nu_{\theta_i,i}(\cdot)$. We define an additional discrepancy between the true conditional density of $X_i\mid \theta_i$ and the normal working model, the Hellinger discrepancy,
	\[
	\mathcal{H}_n^2 := \avgn \E_{\theta_i \sim \pistar}\!\left[\Helsq\!\left(\nu_{\theta_i,i}(\cdot),\, \cN(\theta_i, \sigma_i^2)\right)\right].
	\]
	
	\begin{thm}[Denoising regret against the true posterior]\label{thm:denoising-strong}
		Under the conditions of Theorem~\ref{thm:denoising-normal},
		\[
		\left(\mathrm{Regret}_n^{\star}\right)_+^2 \;\lesssim\; \rho_n^2 \log^3 n \;+\; \mathcal{W}_n^2 \;+\; \mathcal{H}_n^2.
		\]
	\end{thm}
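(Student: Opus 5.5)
The plan is to reduce to Theorem~\ref{thm:denoising-normal} by the triangle inequality in the root mean squared error norm. Write $\|Y\|_n := (\avgn \E|Y_i|^2)^{1/2}$ for a vector of random variables. Then
\[
\mathrm{Regret}_n^{\star} = \mathrm{Regret}_n + \Bigl(\|\theta - h_{\pistar}(Z;\sigma)\|_n - \|\theta - \delta^{\star}(X)\|_n\Bigr),
\]
so it suffices to show
\[
\bigl(\|\theta - h_{\pistar}(Z;\sigma)\|_n - \|\theta - \delta^\star(X)\|_n\bigr)_+^2 \lesssim \mathcal{W}_n^2 + \mathcal{H}_n^2.
\]
Indeed, $(a+b)_+^2 \le 2a_+^2 + 2b_+^2$, and the bound on $a=\mathrm{Regret}_n$ is exactly Theorem~\ref{thm:denoising-normal}. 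This remaining quantity compares two Bayes risks: that of the normal oracle, and that of the true Bayes rule given $X_i$. Both are computed under the same prior $\pistar$, and no estimation is involved.

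For each $i$, let $P_i$ be the joint law of $(\theta_i,X_i)$ and $Q_i$ the joint law of $(\theta_i,Z_i)$. The Bayes risk of $\delta_i^\star$ under $P_i$ is $\mathrm{mmse}_i(P)$, and that of $h_{\pistar}(\cdot;\sigma_i)$ under $Q_i$ is $\mathrm{mmse}_i(Q)$. I want to bound $\mathrm{mmse}(Q)^{1/2}-\mathrm{mmse}(P)^{1/2}$ in the averaged sense. The natural route is to evaluate the true Bayes rule $\delta_i^\star$ on the normal data $Z_i$. By optimality of $h_{\pistar}$ under $Q_i$,
\[
\|\theta - h_{\pistar}(Z)\|_n \le \|\theta - \delta^\star(Z)\|_n,
\]
and it then suffices to bound
\[
\|\theta-\delta^\star(Z)\|_n - \|\theta-\delta^\star(X)\|_n.
\]
Since $\delta_i^\star$ may be arbitrary, I will use a change-of-measure argument. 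Both integrands are functions of $(\theta_i,x)$, and $|\theta_i - \delta_i^\star(x)|\le 2M$ because $\delta_i^\star$ takes values in $[-M,M]$. The difference of expectations under $P_i$ and $Q_i$ is therefore bounded by the $L^2$-type Hellinger bound
\[
|\E_P g - \E_Q g| \lesssim \Hel(P_i,Q_i)\,\bigl(\E_P g^2 + \E_Q g^2\bigr)^{1/2}.
\]
Taking $g = (\theta-\delta^\star)^2 \le 4M^2$ gives a difference of order $M^2\,\Hel(P_i,Q_i)$. Moreover $\Helsq(P_i,Q_i) = \E_{\theta_i}\Helsq(\nu_{\theta_i,i},\cN(\theta_i,\sigma_i^2))$, which averages to $\mathcal{H}_n^2$.

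Working at the level of MSE rather than root-MSE, this yields only a bound linear in $\mathcal{H}_n$ on the squared risk. To get the stated quadratic form I will instead use the root-MSE, writing
\[
\|\theta-\delta^\star(Z)\|_n - \|\theta-\delta^\star(X)\|_n \le \|\delta^\star(Z)-\delta^\star(X)\|_n
\]
under a coupling of $(X_i,Z_i)$ given $\theta_i$. This requires controlling $\delta^\star$ along a coupling, and $\delta^\star$ is not Lipschitz in general. So the better idea is to reverse roles: start from $\|\theta-h_{\pistar}(Z)\|_n$, compare it to $\|\theta - h_{\pistar}(X)\|_n$, and bound the gap by
\[
\|h_{\pistar}(Z)-h_{\pistar}(X)\|_n \lesssim \log n\cdot \mathcal{W}_n
\]
using the $W_2$-optimal coupling. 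Here I use that $h_{\pistar}(\cdot;\sigma_i)$ is Lipschitz with constant $\mathrm{Var}_{\pistar}(\theta\mid x)/\sigma_i^2 \le M^2/k^2$ by Tweedie's formula and Assumption~\ref{assump:lowerbound}; this is a bounded constant, so $\mathcal{W}_n$ suffices. It then remains to compare $\|\theta-h_{\pistar}(X)\|_n$ with $\|\theta-\delta^\star(X)\|_n$.

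For that last comparison I use the Pythagorean identity
\[
\E|\theta_i-h(X_i)|^2 = \E|\theta_i-\delta_i^\star(X_i)|^2 + \E|\delta_i^\star(X_i)-h(X_i)|^2,
\]
so the gap in root-MSE is at most $\|\delta^\star(X)-h_{\pistar}(X)\|_n$. I need to show
\[
\avgn \E|\delta_i^\star(X_i)-h_{\pistar}(X_i;\sigma_i)|^2 \lesssim \mathcal{H}_n^2.
\]
Both maps are posterior means under the same prior: one under the likelihood $\nu_{\theta,i}$, the other under $\phi(\cdot-\theta;\sigma_i)$. Their difference is $\mathrm{Cov}$ under the true posterior of $\theta$ with the likelihood ratio $1-\phi/\nu$. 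Writing $\delta^\star - h$ as a ratio and expanding $\sqrt{\nu}-\sqrt{\phi}$ with Cauchy--Schwarz, together with the bound $|\theta|\le M$, should give a squared bound of order $M^2\,\E_{\theta}\Helsq(\nu_{\theta,i},\phi(\cdot-\theta;\sigma_i))$. This is the step I expect to be hardest, because of the denominators $p_X(x)$ and $f_{\pistar,\sigma_i}(x)$. I will first try the inequality
\[
\int \frac{\bigl(\int \theta\,(a_\theta - b_\theta)\,d\pi\bigr)^2}{\int a\,d\pi}\,dx \lesssim M^2 \int\!\!\int (\sqrt{a_\theta}-\sqrt{b_\theta})^2\,dx\,d\pi,
\]
which follows by factoring $a-b=(\sqrt a-\sqrt b)(\sqrt a+\sqrt b)$. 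The remaining denominator mismatch term is handled the same way, with the $\sqrt{b}$ piece absorbed using the same Hellinger factor. Combining the three gaps and squaring gives the bound $\rho_n^2\log^3 n+\mathcal{W}_n^2+\mathcal{H}_n^2$.
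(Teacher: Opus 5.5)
\textbf{There is a genuine gap, in the step you flagged as hardest.} Your reduction is the same as the paper's. You insert $\avgn\E|\theta_i-h_{\pistar}(X_i;\sigma_i)|^2$. The first gap is then controlled by the Lipschitz constant $M^2/k^2$ of $h_{\pistar}$ under a $W_2$-optimal conditional coupling, so it is $(M^2/k^2)\mathcal{W}_n$, not $\log n\cdot\mathcal{W}_n$. The second gap is controlled, via the Pythagorean identity, by $\bigl(\avgn\E|\delta_i^\star(X_i)-h_{\pistar}(X_i;\sigma_i)|^2\bigr)^{1/2}$.

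Your displayed inequality is false. Take $\pi=\delta_M$. Its left side is then $M^2\int(a-b)^2/a\,dx=M^2\chi^2(b\,\|\,a)$ and its right side is $2M^2\Helsq(a,b)$. A $\chi^2$-divergence is not controlled by Hellinger: with $a=\cN(0,1)$ and $b=(1-\epsilon)a+\epsilon\,\cN(0,2)$ one has $\Helsq(a,b)\le\epsilon$ but $\chi^2(b\,\|\,a)=\infty$. In your application ($a_\theta=\nu_{\theta,i}$, $b_\theta=\phi(\cdot-\theta;\sigma_i)$), the left side is infinite as soon as the working normal is sufficiently heavier-tailed than $\nu_{\theta,i}$, however small $\mathcal{H}_n$ is.

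The mechanism shows up in your Cauchy--Schwarz step. Write $m=\int a_\theta\,d\pi$ and $m_G=\int b_\theta\,d\pi$. After factoring $a-b$ you are left with $\int(\sqrt{a_\theta}+\sqrt{b_\theta})^2d\pi/m\le 2(1+m_G/m)$, and $m_G/m$ is unbounded. The ``denominator mismatch'' term has the same defect. More fundamentally, the two pieces cannot be bounded separately. In the point-mass example both posterior means equal $M$, so $\delta^\star-h\equiv 0$, yet each of your two pieces has squared $L^2(m)$-norm $M^2\chi^2(b\,\|\,a)=\infty$.

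\textbf{How the paper avoids this.} It never forms likelihood ratios. Since both posteriors live on $[-M,M]$, $|h_{\pistar}(x;\sigma_i)-\delta_i^\star(x)|\lesssim M\,\TV$ of the two posteriors, and $\TV^2\le 2\Helsq$. Lemma~\ref{lem:hellinger-posteriors} then bounds the $m$-average of the posterior squared Hellinger distance by a constant times $\int\Helsq(f_\theta,g_\theta)\,d\pi$. In that proof, $\sqrt{p_f}-\sqrt{p_g}=\frac{\sqrt{\pi}}{PQ}\bigl(Q(U-V)+V(Q-P)\bigr)$, and the potentially large factor $V^2/Q^2$ integrates to one against $\pi$.

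\textbf{Repairing your direct computation.} Use the combined form $\delta^\star-h=\int(\theta-h(x))(a_\theta-b_\theta)\,d\pi/m$, with $h=h_{\pistar}(\cdot;\sigma_i)$. This is valid since $\int(\theta-h(x))b_\theta\,d\pi=0$. Then split the line:
\begin{itemize}
\item On $\{m_G\le 4m\}$, your Cauchy--Schwarz argument goes through, since $2(1+m_G/m)\le 10$.
\item On $\{m_G>4m\}$, use $|\delta^\star-h|\le 2M$ together with $\int_{m_G>4m}m\le\frac14\int_{m_G>4m}m_G\le 2\Helsq(m,m_G)\le 2\int\Helsq(a_\theta,b_\theta)\,d\pi$. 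This uses Lemma~\ref{lem:hellinger-integral-bound} and joint convexity of $\Helsq$.
\end{itemize}
Either route gives $\avgn\E|\delta_i^\star(X_i)-h_{\pistar}(X_i;\sigma_i)|^2\lesssim M^2\mathcal{H}_n^2$, after which the rest of your argument closes.
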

	
	\noindent The additional $\mathcal{H}_n^2$ term reflects the gap between using the normal likelihood and the exact, possibly non-normal, likelihood of $X_i$. In the well-specified case $X_i\mid \theta_i \sim \cN(\theta_i,\sigma_i^2)$, both $\mathcal{W}_n$ and $\mathcal{H}_n$ vanish, the two oracles coincide, and $\mathrm{Regret}_n = \mathrm{Regret}_n^{\star}$.
	
	\section{Robustness to dependence}\label{sec:general}
	
	So far we have assumed that $X_1,\ldots,X_n$ are independent across units. This is often violated in practice. As one example, \citet{gu2022ranking} fit a Bradley--Terry model and apply the NPMLE to the components of the maximum likelihood estimator $(X_1,\ldots,X_n)$, which are correlated. In this section, we show that the empirical Bayes procedure of Section~\ref{sec:main-res} remains consistent under dependence and approximate normality.
	
	To ground the discussion, suppose momentarily that $\theta_i \sim \pistar$ and $X_i \mid \theta_i \sim N(\theta_i, \sigma_i^2)$, so that normality holds exactly, but that the $\theta_i$ or the $X_i$ (or both) are dependent across $i$. Two conceptual issues arise. First, the objective in~\eqref{eq:npmle} is no longer the joint log-likelihood. Instead, it is a type of ``independence likelihood''~\citep{chandler2007inference}, a special case of composite likelihood~\citep{varin2011overview}. Second, the true Bayes-optimal denoiser is $\mathbb{E}_{\pistar}[\theta_i \mid X_1,\ldots,X_n]$, since under dependence all observations carry information about $\theta_i$, beyond their role in estimating $\pistar$.  If the dependence structure is not explicitly modeled, however, it is natural to benchmark $\hat{\theta}_i$ against the marginal oracle $\mathbb{E}_{\pistar}[\theta_i \mid X_i]$ instead, and we do so below.\footnote{The analogous choice arises in multiple testing, where the marginal local false discovery rate $\mathbb{P}_{\pistar}[\theta_i = 0 \mid X_i]$ remains a meaningful and commonly used
		target under dependence, even though it is the full-vector oracle $\mathbb{P}_{\pistar}[\theta_i = 0 \mid X_1,\ldots,X_n]$ that is optimal~\citep{heller2021optimal, karmakar2025inference}.}
	
	In contrast to Section~\ref{sec:main-res}, our focus here is consistency (without rates) under substantially weaker conditions. Throughout, we assume that $\theta_i \sim \pistar$ marginally for some $\pistar \in \cP([-M,M])$, with the $\theta_i$ and the $X_i$ no longer necessarily independent across $i$, and the $X_i$ only approximately normal given $\theta_i$. We continue to write $\mu_i$ for the marginal law of $X_i$, as in~\eqref{eq:marginal_measure}. Our first condition quantifies a notion of dependence across units.
	
	\begin{assume}[Approximate independence]\label{asn:taildef}
		For any fixed sequence of functions $f_i\colon \mathbb{R} \to \mathbb{R}$, $1\le i\le n$, such that $|f_i'(0)|\le 1$ and $\max_{1\le i\le n}\lVert f_i''\rVert_{\infty}\le 1$, we assume that
		\begin{equation*}
			\frac{1}{n}\sum_{i=1}^{n}\bigl(f_i(X_i) - \E\,f_i(X_i)\bigr)
			\overset{\Prob}{\to} 0
			\qquad \text{and} \qquad
			\limsup_{T\to\infty}\limsup_{n\to\infty} \frac{1}{n}\sum_{i=1}^{n} \E\bigl[X_i^2 \mathbf{1}(|X_i|\ge T)\bigr] = 0.
		\end{equation*}
	\end{assume}
	
	Assumption~\ref{asn:taildef} requires only pointwise---not uniform---convergence over the class of function sequences with second derivatives bounded by $1$. The integrability condition is likewise weaker than the uniform sub-Gaussianity of Assumption~\ref{assump:subgauss}: it only constrains the average second moment of $X_i$. We further weaken the approximate optimality condition in~\eqref{eq:likelihood-ineq} and allow the variances $\sigma_i^2$ to be estimated.
	
	\begin{assume}[Approximate optimality with estimated variances]\label{asn:appopt}
		Let $\hat{\sigma}_i \equiv \hat{\sigma}_i(X_1, \ldots, X_n)$ satisfy $\min_i\{\hat{\sigma}_i, \sigma_i\} \geq c$ and $\max_i\{\hat{\sigma}_i, \sigma_i\} \leq C$ almost surely, for fixed constants $c, C > 0$ and let $\pihat \in \cP([-M,M])$ almost surely. We assume that there exists a deterministic sequence $r_n \to 0$ such that $\pihat, \hat{\sigma}_1, \ldots, \hat{\sigma}_n$ satisfy
		\begin{equation*}
			\frac{1}{n}\sum_{i=1}^{n}\log\int \phi(X_i - \theta;\, \hat{\sigma}_i)\,
			d\pihat(\theta)
			\;\geq\;
			\frac{1}{n}\sum_{i=1}^{n}\log\int \phi(X_i - \theta;\, \hat{\sigma}_i)\,
			d\pistar(\theta)
			\;-\; r_n,
		\end{equation*}
		and
		\begin{equation*}
			\frac{1}{n}\sum_{i=1}^{n}\bigl|\hat{\sigma}_i - \sigma_i\bigr| \overset{\Prob}{\to} 0.
		\end{equation*}
	\end{assume}
	
	The upper and lower bounds on $\hat{\sigma}_i$ can be enforced by clipping standard variance estimators. The sequence $r_n$ may be taken to be zero when $\pihat$ is the NPMLE with the estimated standard deviations $\hat{\sigma}_i$ plugged into the misspecified objective. The average convergence of $\hat{\sigma}_i$ to $\sigma_i$ is mild.
	
	We now state the main result of this section.
	The result is stated in terms of two discrepancies between the observed data and the
	normal working model. At the level of the marginal law $\mu_i$ of $X_i$, define the
	averaged Wasserstein-1 discrepancy
	\[
	\mathcal{V}_n := \avgn W_1\!\left(\mu_i,\, f_{\pistar,\sigma_i}\right),
	\]
	which controls consistent deconvolution. For the regret guarantee we additionally use
	the conditional Wasserstein-2 discrepancy $\mathcal{W}_n$ of Section~\ref{sec:denoising}.
	
	\begin{thm}[Consistency under dependence]\label{thm:depcons}
		Suppose Assumptions~\ref{asn:taildef} and~\ref{asn:appopt} hold. If $\mathcal{V}_n \to 0$,
		then $\pihat$ converges weakly to $\pistar$ in probability and
		\begin{equation}\label{eq:dephelcon}
			\avgn \Helsq\!\left(f_{\pihat,\sigma_i},\, f_{\pistar,\sigma_i}\right)
			\overset{\Prob}{\to} 0.
		\end{equation}
		In addition, assume $(X_i,\theta_i)$ is coupled for each $1\le i\le n$ to the normal
		experiment~\eqref{eq:equivGauss} generating $(Z_i,\theta_i)$ and $\mathcal{W}_n \to 0$. Define the denoisers with the estimated variances, i.e.,
		$$\hat{\theta}_i:=h_{\pihat}(X_i;\hat{\sigma}_i),$$
		with the function $h$ defined as in \eqref{eq:postmean}. Then
		\begin{equation}\label{eq:depregcon}
			\mathrm{Regret}_n \to 0,
		\end{equation}
		with $\mathrm{Regret}_n$ as in~\eqref{eq:regret_eq}.
	\end{thm}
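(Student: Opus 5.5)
We will prove Theorem~\ref{thm:depcons} by a compactness argument in the space $\cP([-M,M])$, which is compact under weak convergence. We do not derive rates.

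\textbf{Step 1: convergence of the empirical objective.} Define the population criterion
\[
L_n(\pi) := \avgn \E_{X\sim f_{\pistar,\sigma_i}} \log f_{\pi,\sigma_i}(X),
\qquad
\hat L_n(\pi) := \avgn \log f_{\pi,\hat\sigma_i}(X_i).
\]
We want $\sup_{\pi}|\hat L_n(\pi) - L_n(\pi)|\overset{\Prob}{\to}0$. We split this into three pieces.

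First, we replace $\hat\sigma_i$ by $\sigma_i$. For $\sigma\in[c,C]$ and $|\theta|\le M$, one has $|\partial_\sigma \log f_{\pi,\sigma}(x)|\lesssim 1+x^2$ uniformly in $\pi$. Truncating at $|X_i|\le T$ and using the tail part of Assumption~\ref{asn:taildef} together with $\avgn|\hat\sigma_i-\sigma_i|\to0$ makes this error negligible.

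Second, we replace $X_i$ by its expectation. The map $x\mapsto \log f_{\pi,\sigma}(x)$ is smooth, with second derivative bounded by a constant depending only on $M,c,C$, because the second derivative is a posterior variance term bounded on $[-M,M]$. Its first derivative at $0$ is also bounded. After rescaling, Assumption~\ref{asn:taildef} gives pointwise convergence in probability for each fixed $\pi$. To make this uniform, we use a finite $\epsilon$-net of $\cP([-M,M])$ in a metric under which $\log f_{\pi,\sigma}(x)$ is equicontinuous on compacts, for instance a bounded-Lipschitz metric on $\pi$. Only pointwise convergence is available, so this finite net is essential.

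Third, we replace $\mu_i$ by $f_{\pistar,\sigma_i}$. The map $x\mapsto\log f_{\pi,\sigma}(x)$ is locally Lipschitz with constant $\lesssim 1+|x|$. Truncating at level $T$, the expectation difference is bounded by $(1+T)\mathcal V_n$ plus a tail term controlled by Assumption~\ref{asn:taildef}. This piece vanishes because $\mathcal V_n\to0$.

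\textbf{Step 2: identification and Hellinger convergence.} We have $L_n(\pistar)-L_n(\pi)=\avgn \KL(f_{\pistar,\sigma_i}\,\|\,f_{\pi,\sigma_i})\ge 2\avgn\Helsq(f_{\pi,\sigma_i},f_{\pistar,\sigma_i})$. Combining this with Step 1 and the approximate optimality in Assumption~\ref{asn:appopt} gives
\[
\avgn \Helsq\!\left(f_{\pihat,\sigma_i}, f_{\pistar,\sigma_i}\right) \le \tfrac12\bigl(L_n(\pistar)-L_n(\pihat)\bigr) \le o_\Prob(1)+r_n,
\]
which is \eqref{eq:dephelcon}. For weak convergence, suppose a subsequence had $\pihat$ staying at distance $\ge\epsilon$ from $\pistar$. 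Then compactness of $\pi\in\cP([-M,M])$ and $\sigma\in[c,C]$, combined with identifiability of Gaussian location mixtures, gives
\[
\inf\Bigl\{\Helsq(f_{\pi,\sigma},f_{\pistar,\sigma}) : d(\pi,\pistar)\ge\epsilon,\ \sigma\in[c,C]\Bigr\}>0,
\]
which contradicts the displayed bound.

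\textbf{Step 3: regret.} Write $\hat\theta_i - \theta_i = [h_{\pihat}(X_i;\hat\sigma_i)-h_{\pihat}(Z_i;\sigma_i)] + [h_{\pihat}(Z_i;\sigma_i)-h_{\pistar}(Z_i;\sigma_i)] + [h_{\pistar}(Z_i;\sigma_i)-\theta_i]$. By Minkowski's inequality, $(\mathrm{Regret}_n)_+$ is at most the root-mean-square of the first two brackets.

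For the first bracket, $h_\pi(\cdot;\sigma)$ takes values in $[-M,M]$. Its derivative is $\Var_\pi(\theta\mid x)/\sigma^2\le M^2/c^2$, and its $\sigma$-derivative is $\lesssim 1+|x|$. So the first bracket is bounded by a constant times $|X_i-Z_i| + \min(2M,(1+|X_i|)|\hat\sigma_i-\sigma_i|)$. Taking the optimal coupling, this has mean square $\lesssim\mathcal W_n^2+o(1)$, using boundedness by $2M$, uniform integrability, and the average convergence of $\hat\sigma_i$.

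For the second bracket, $h_\pi(z;\sigma)$ is jointly continuous in $(\pi,z,\sigma)$ under weak convergence of $\pi$, and it is bounded by $M$. Since $\pihat\to\pistar$ weakly in probability and $Z_i$ is tight uniformly in $i$, dominated convergence together with uniform continuity on $\{|z|\le T\}\times[c,C]\times\cP([-M,M])$ gives mean square $\to0$. One subtlety is that $\pihat$ depends on the $X$'s, which are correlated with $Z_i$. We handle this by bounding by $\sup_{\pi:d(\pi,\pistar)\le\delta}$, which needs no independence.

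\textbf{Main obstacle.} The hardest step is the uniformity in Step 1. Assumption~\ref{asn:taildef} only gives convergence for each fixed function sequence, so we must carefully construct the finite net, together with the uniform Lipschitz-in-$\pi$ control of $\log f_{\pi,\sigma}$ on $|x|\le T$, and then handle the tails separately.
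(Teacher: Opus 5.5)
Your proof is correct. For the first half of the theorem it takes a genuinely different route from the paper.

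\emph{The paper's route.} The paper never compares the empirical criterion uniformly to a population criterion. Instead it builds a one-sided uniform separation statement (\cref{lem:unifbd}) from three ingredients:
\begin{enumerate}
\item a KL-curvature bound in the Gaussian-smoothed metric $d_\eta$, derived from Nguyen's deconvolution inequality and Pinsker (\cref{lem:curvaturebd});
\item pointwise concentration of the log-likelihood ratio (\cref{lem:ptwiseconc});
\item a covering argument in the smoothed-CDF pseudometric $\tilde{d}_{\eta,B}$, using optimal transport maps and posterior-average estimates on the event $\sum_i X_i^2\le C_0 n$ (\cref{lem:coverbd}).
\end{enumerate}
From this it obtains weak convergence first. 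Only afterwards does it deduce \eqref{eq:dephelcon}, via $W_1(\pihat,\pistar)\to 0$, total variation, and $\Helsq\le\TV$.

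\emph{Your route.} You run a Wald-type argument. Equicontinuity comes simply from the lower bound on $f_{\pi,\sigma}$ over $|x|\le T$ together with a bounded-Lipschitz net. The information inequality $\KL\ge 2\Helsq$ then yields \eqref{eq:dephelcon} directly, with the explicit bound $2\avgn\Helsq(f_{\pihat,\sigma_i},f_{\pistar,\sigma_i})\le r_n+2\sup_\pi|\hat L_n(\pi)-L_n(\pi)|$. Weak convergence then follows from identifiability and compactness.

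\emph{Comparison.} Your route is shorter and more elementary, needing neither Nguyen's bound nor transport maps. It also makes transparent how the Hellinger error inherits $r_n$ and the uniform deviation. The paper's route supplies an explicit curvature modulus $\tau(\delta)$ in place of your abstract $\inf>0$, although the final theorem does not use it quantitatively.

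\emph{Minor points.}
\begin{enumerate}
\item If you center at $\log f_{\pistar,\sigma_i}$, as in \cref{lem:ptwiseconc}, the relevant functions are globally Lipschitz in $x$, with derivative at most $2M/c^2$. Your third piece is then $\lesssim\mathcal{V}_n$ with no truncation. Only differences of criteria enter your Step~2, so nothing else changes.
\item The theorem asserts $\mathrm{Regret}_n\to 0$ in both directions. Your Minkowski step already gives this: by the reverse triangle inequality it bounds $|\mathrm{Regret}_n|$, not just the positive part.
\end{enumerate}
The regret argument otherwise matches the paper's, except that you evaluate the prior-estimation error at $Z_i$ rather than at $X_i$.
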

	
	Theorem~\ref{thm:depcons} shows that consistent deconvolution and vanishing regret can be achieved by nonparametric maximum likelihood under mild average integrability of the $X_i$, a weak notion of approximate normality, and an averaged notion of approximate independence. To accommodate these weaker conditions, we depart from the proof techniques of~\citet{zhang2009generalized} and~\citet{ghosal2001entropies}, which rely on exponential moment bounds or concentration inequalities and thus require considerably stronger tail conditions. Our argument is instead inspired by recent developments in signal-distribution recovery for principal component analysis~\citep{zhong2022empirical} and linear regression~\citep{fan2023gradient}, which we adapt to the approximately normal, dependent-data setting.


	\section{Concrete instantiations of results}
	
	We have stated our main results in Sections~\ref{sec:main-res} and~\ref{sec:general} under substantial generality, to accommodate the plethora of constructions of approximately normal $X_i$ one may encounter in practice. In this section, we instantiate these results in stylized, but important special cases. Section~\ref{sec:sample-avg} instantiates rates when $X_i$ are sample averages of iid micro-data, while Section~\ref{sec:gaussian} provides a result under dependence when $(X_1,\ldots,X_n)$ are multivariate normal.

	\subsection{Rates for independent sample averages}
	\label{sec:sample-avg}
	
	This section presents the implications of our main results in the context
	of \eqref{eq:xi_avg}. Let us recall the model here for convenience:
	\begin{equation}\label{eq:xi_avg_rev}
		X_i = \theta_i + \frac{1}{J_i}\sum_{j=1}^{J_i} \varepsilon_{ij},
	\end{equation}
	where $\E[\varepsilon_{ij}|\theta_i] = 0$, the $\varepsilon_{i1}, \varepsilon_{i2}, \ldots , \varepsilon_{i J_i}$s are jointly independent conditional on $\theta_i$.  Suppose $\theta_i \overset{iid}{\sim} \pst$ with
	$\pst \in \mathcal{P}([-M, M])$ for some fixed $M > 0$. The following result is a formal version of \cref{thm:incvar} in the Introduction.
	
	\begin{thm}\label{thm:main_rates}
		Under model \eqref{eq:xi_avg_rev}, suppose that $\mathrm{Var}[\varepsilon_{ij}|\theta_i] =
		\sigma_i^2 J_i$, where $\sigma_1^2, \ldots, \sigma_n^2$ are deterministic and satisfy
		\cref{assump:lowerbound}. We further assume that $\varepsilon_{ij}/\sqrt{J_i}$ conditioned on $\theta_i$ 
		are uniformly sub-Gaussian. Then the following bound holds:
		\begin{equation*}
			\E\!\left(\frac{1}{n}\sum_{i=1}^{n}
			\Hel^2\!\left(f_{\hat{\pi}_n,\sigma_i},\, f_{\pst,\sigma_i}\right)
			\right)
			\lesssim
			\log^4n\left(\frac{1}{n} + \frac{1}{n}\sum_{i=1}^{n}\frac{1}{J_i}\right).
		\end{equation*}
		Recall the definition of $\mathrm{Regret}_n$ from \eqref{eq:regret_eq}. We have the
		following bound:
		\begin{equation*}
			\bigl(\mathrm{Regret}_n^+\bigr)^2
			\lesssim
			\log^7n\left(\frac{1}{n} + \frac{1}{n}\sum_{i=1}^{n}\frac{1}{J_i}\right).
		\end{equation*}
	\end{thm}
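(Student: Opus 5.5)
The plan is to reduce \cref{thm:main_rates} to \cref{thm:main} and \cref{thm:denoising-normal}. That means verifying their hypotheses and bounding three quantities: $\rbar{1}$, $\rbar{2}$, and $\mathcal{W}_n^2$, each by $\widetilde{\mathcal{O}}(n^{-1}\sum_i J_i^{-1})$.

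\textbf{Step 1: regularity assumptions.} \cref{assump:lowerbound} is assumed directly. For \cref{assump:subgauss}, write $X_i=\theta_i+J_i^{-1/2}\sum_j(\varepsilon_{ij}/\sqrt{J_i})/\sqrt{J_i}\cdot$; more simply, $X_i-\theta_i=J_i^{-1}\sum_j \varepsilon_{ij}$. This is a normalized sum of $J_i$ independent, centered, sub-Gaussian variables $\varepsilon_{ij}/\sqrt{J_i}$, so conditionally on $\theta_i$ it is $C'$-sub-Gaussian by Hoeffding-type bounds. Since $|\theta_i|\le M$, we obtain the uniform tail bound $\Prob(|X_i|>t)\le C_1 e^{-C_2t^2}$. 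One may need a rescaling of the constants so that $C_2>1$; I would handle this by absorbing a scaling, or by noting that the proof of \cref{thm:main} only uses some Gaussian tail.

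\textbf{Step 2: the Wasserstein term $\mathcal{W}_n^2$.} Conditionally on $\theta_i$, $X_i-\theta_i$ is a normalized sum of $J_i$ iid mean-zero terms with variance $\sigma_i^2$ (after dividing by $\sqrt{J_i}$) and uniformly bounded higher moments. A quantitative CLT in $W_2$ for iid sums, e.g.\ the bound $W_2(\text{sum}/\sqrt{J},N)\lesssim J^{-1/2}$ under finite fourth moments (Rio; Bonis), gives
\[
W_2^2\bigl(\mathrm{Law}(X_i\mid\theta_i),N(\theta_i,\sigma_i^2)\bigr)\lesssim J_i^{-1},
\]
uniformly in $\theta_i$. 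Averaging yields $\mathcal{W}_n^2\lesssim n^{-1}\sum_i J_i^{-1}$.

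\textbf{Step 3: the misspecification terms $r_{1,i},r_{2,i}$ — the main obstacle.} The difficulty is that $\mu_i$ need not have a density, since the $\varepsilon_{ij}$ may be lattice. This rules out \cref{prop:hellinger-bound} in general. I would instead verify \cref{asn:assumeindep} directly using smoothness of the test functions. Set $\ell_\pi(x)=\log f_{\pi,\sigma_i}(x)/f_{\pistar,\sigma_i}(x)$; it is smooth with derivatives growing polynomially in $|x|$. Let $Z_i\sim f_{\pistar,\sigma_i}$, coupled with $X_i$ so that $\E|X_i-Z_i|^2=W_2^2\lesssim J_i^{-1}$. Then
\[
\E\ell_\pi(X_i)-\E\ell_\pi(Z_i)\le \E\bigl[\sup|\ell_\pi'|\cdot|X_i-Z_i|\bigr].
\]
Two facts are available on the well-specified side: $\E\ell_\pi(Z_i)=-\KL\le -2\Helsq$, and $\E\ell_\pi^2(Z_i)\lesssim g_T(\Helsq)$ by the standard Wong--Shen-type bounds.

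The crude derivative bound gives only an error of order $J_i^{-1/2}$ in the first moment, not $J_i^{-1}$. To get $J_i^{-1}$ I would use a second-order expansion. The key is that $\ell_\pi'(Z_i)$ has size controlled by the Hellinger distance. Since $\ell_\pi'=(h_\pi-h_{\pistar})/\sigma_i^2$, its $L^2$ norm under $Z_i$ is bounded by $\Hel\cdot\mathrm{polylog}$ via the usual posterior-mean/Hellinger comparison (as in \citet{jiang2009general}). This gives
\[
|\E\ell_\pi(X_i)-\E\ell_\pi(Z_i)|\lesssim \Hel\, J_i^{-1/2}\,\mathrm{polylog}+J_i^{-1}\mathrm{polylog},
\]
with the second-order remainder controlled by $\E|X_i-Z_i|^2$. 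AM-GM applied to the first term, $\Hel J_i^{-1/2}\le \tfrac12\Helsq+\tfrac12J_i^{-1}$, then absorbs the cross term into $-\Helsq$, leaving $r_{1,i}\lesssim J_i^{-1}\mathrm{polylog}\,n$.

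For $r_{2,i}$ I would argue similarly using $|\ell_\pi^2(X_i)-\ell_\pi^2(Z_i)|\le|X_i-Z_i|\cdot 2|\ell_\pi\ell_\pi'|$ plus sub-Gaussian truncation at $|x|\lesssim\sqrt{\log n}$. This yields $r_{2,i}\lesssim J_i^{-1}\mathrm{polylog}$, possibly after again trading a $\Hel$ factor via AM-GM against $g_T(\Helsq)$.

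\textbf{Step 4: conclusion.} With these bounds, $\rho_n^2\lesssim \log^4 n\,(n^{-1}+n^{-1}\sum_iJ_i^{-1})$, after absorbing the polylog factors into the $\log^4 n$. \cref{thm:main} then gives the Hellinger bound, and \cref{thm:denoising-normal} gives $(\mathrm{Regret}_n)_+^2\lesssim \rho_n^2\log^3n+\mathcal{W}_n^2\lesssim\log^7 n(\cdots)$.
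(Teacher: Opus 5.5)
Your proposal is correct, but it obtains the key misspecification estimate by a different route than the paper.

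\emph{The paper's route.} The paper proves \cref{thm:main_rates} as the case $k=2$ of \cref{thm:moment_adapt}. It swaps the $\varepsilon_{ij}$ for Gaussians one at a time (Lindeberg). It Taylor-expands each swap of $t_\pi=\log(f_{\pi,\sigma_i}/f_{\pst,\sigma_i})$ to fourth order, using the bounded derivatives from \cref{lem:deriv-cumulant-bound}, and the terms of order $p\le 2$ cancel by moment matching. After a second Lindeberg pass, it uses Stein's identity to rewrite the surviving third-order coefficient as $\E[t_\pi(\cdot)H_3(Z)]$. This is at most $\sqrt{\E t_\pi^2}\lesssim\sqrt{g_T(\Helsq)}$ by \cref{lem:Vk-hellinger-bound}. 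The result is a bound $J_i^{-1/2}\Hel\log(1/\Hel)+J_i^{-1}$ followed by AM--GM, which is the same shape you reach.

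\emph{Your route.} You use a single $W_2$ coupling and a first-order expansion with globally bounded $\ell_\pi''$, giving $|\E_{\mu_i}\ell_\pi-\E_{f_{\pst,\sigma_i}}\ell_\pi|\le\|\ell_\pi'\|_{L^2(f_{\pst,\sigma_i})}W_2+CW_2^2$. You then extract the factor $\Hel$ from the posterior-mean comparison $\|h_\pi-h_{\pst}\|_{L^2(f_{\pst,\sigma_i})}\lesssim\Hel\cdot\mathrm{polylog}(1/\Hel)$.

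\emph{What each buys.} Your route is shorter and reuses the same CLT input that bounds $\mathcal{W}_n$. It also makes the $r_{2,i}$ step almost trivial. It would give \cref{thm:moment_adapt} as well, once $W_2\lesssim J_i^{-(k-1)/2}$ under moment matching; that is the Bobkov-type bound the paper already invokes for $\mathcal{W}_n$. The price is importing the Jiang--Zhang comparison, which is stated for regularized posterior means. You must remove the regularization using compact support, taking the truncation level polynomially small in $\Hel$. The paper's integration-by-parts trick needs only the Wong--Shen-type bound proved in its appendix.

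Two details to tighten.
\begin{enumerate}
\item The polylogarithmic factor in the posterior-mean comparison is in $\log(1/\Hel)$, not $\log n$. Split on $\Hel\le n^{-2}$ as the paper does; this is where the additive $1/n$ in $r_{1,i}$ comes from.
\item For $r_{2,i}$, bounding $|\ell_\pi\ell_\pi'|$ by its supremum over a $\sqrt{\log n}$ window only gives $J_i^{-1/2}\sqrt{\log n}$. Keep $\ell_\pi(Z_i)$ inside the expectation via $|\ell_\pi(\xi)|\le|\ell_\pi(Z_i)|+\|\ell_\pi'\|_\infty|X_i-Z_i|$. This gives $|\E\ell_\pi^2(X_i)-\E\ell_\pi^2(Z_i)|\lesssim W_2\sqrt{\E\ell_\pi^2(Z_i)}+W_2^2\lesssim J_i^{-1/2}\sqrt{g_T(\Helsq)}+J_i^{-1}$. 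Then use $J_i^{-1/2}\sqrt{g_T}\le g_T+J_i^{-1}$.
\end{enumerate}
Finally, $\ell_\pi'=(h_\pi-h_{\pst})/\sigma_i^2$ and $\ell_\pi''$ are globally bounded by \cref{lem:deriv-cumulant-bound}, not merely polynomially growing. So no truncation is needed anywhere in Step~3.
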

	
	\noindent We reiterate that the above rates demonstrate that standard empirical Bayes procedures only require a central limit theorem to kick in coordinatewise, instead of jointly across all $n$ coordinates.
	
	\noindent Another interesting feature of our main results is that the NPMLE adapts
	automatically to the number of conditional moments of $\varepsilon_{ij}/\sqrt{J_i}$ that
	match those of $N(0,\sigma_i^2)$. The following result makes this precise.
	
	\begin{thm}\label{thm:moment_adapt}
		Consider the same setup as in \cref{thm:main_rates}. We further assume that
		for $1 \le \ell \le k$, with $k\ge 2$, it holds that
		\begin{equation*}
			\E\!\left[\left(\frac{\varepsilon_{ij}}{\sqrt{J_i}}\right)^{\ell} \big|\theta_i\right]
			= \sigma_i^\ell\, \E(Z^\ell), \qquad Z \sim N(0,1).
		\end{equation*}
		Then we have the following conclusions:
		\begin{equation*}
			\E\!\left(\frac{1}{n}\sum_{i=1}^{n}
			\Hel^2\!\left(f_{\hat{\pi}_n,\sigma_i},\, f_{\pst,\sigma_i}\right)
			\right)
			\lesssim
			\log^4n\left(\frac{1}{n} + \frac{1}{n}\sum_{i=1}^{n}\frac{1}{J_i^{k-1}}\right),
		\end{equation*}
		and
		\begin{equation*}
			\bigl(\mathrm{Regret}_n^+\bigr)^2
			\lesssim
			\log^7n\left(\frac{1}{n} + \frac{1}{n}\sum_{i=1}^{n}\frac{1}{J_i^{k-1}}\right).
		\end{equation*}
	\end{thm}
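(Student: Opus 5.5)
The plan is to reduce \cref{thm:moment_adapt} to \cref{thm:main} and \cref{thm:denoising-normal}. It then suffices to show two bounds, up to polylogarithmic factors. First, the average misspecification terms satisfy $\rbar{1}+\rbar{2}\lesssim \log^{c} n\cdot \avgn J_i^{-(k-1)}$. Second, the conditional Wasserstein term satisfies $\mathcal{W}_n^2\lesssim \avgn J_i^{-(k-1)}$. Plugging these into $\rho_n^2=\log^4 n/n+\rbar{1}+\rbar{2}$ gives the Hellinger bound directly from \cref{thm:main}. The regret bound follows from $\rho_n^2\log^3 n+\mathcal{W}_n^2$, exactly as in the proof of \cref{thm:main_rates}. \cref{assump:subgauss} holds because $\theta_i\in[-M,M]$ and $X_i-\theta_i$ is an average of $J_i$ independent sub-Gaussian variables with variance proxy of order $\sigma_i^2$. \cref{assump:lowerbound} is assumed. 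So all the work is in the two CLT-quality bounds, and I take the argument of \cref{thm:main_rates} (the case $k=2$) as the template. The only change is that each CLT error now carries an extra factor of $J_i^{-(k-2)}$ from moment matching.

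For $\mathcal{W}_n^2$, I condition on $\theta_i$ and write $X_i-\theta_i=J_i^{-1/2}\sum_j \xi_{ij}$, where $\xi_{ij}=\varepsilon_{ij}/\sqrt{J_i}$ has its first $k$ moments equal to those of $N(0,\sigma_i^2)$. I would invoke a quantitative CLT in Wasserstein distance for sums with matching moments, e.g.\ the results of Bobkov or Rio. These give $W_2(\mathrm{Law}(J_i^{-1/2}\textstyle\sum_j\xi_{ij}),N(0,\sigma_i^2))\lesssim J_i^{-(k-1)/2}$ under sub-Gaussianity. The mechanism is that the Edgeworth expansion starts at the $(k+1)$-th cumulant, which is the first one not matched. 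Squaring and averaging over $i$ and $\theta_i$ gives $\mathcal{W}_n^2\lesssim \avgn J_i^{-(k-1)}$. The constants are uniform because the sub-Gaussian constant is uniform and $\sigma_i$ is bounded away from $0$.

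For $r_{1,i},r_{2,i}$ I cannot simply use \cref{prop:hellinger-bound}, since $\mu_i$ need not have a density (lattice micro-data). Instead, I will bound the left-hand sides of \eqref{eq:condition-a}--\eqref{eq:condition-b} directly. Write $\ell_\pi(x)=\log(f_{\pi,\sigma_i}(x)/f_{\pistar,\sigma_i}(x))$. The strategy has three steps.
\begin{enumerate}
\item Split $\E_{\mu_i}\ell_\pi=\E_{f_{\pistar,\sigma_i}}\ell_\pi+(\E_{\mu_i}-\E_{f_{\pistar,\sigma_i}})\ell_\pi$, and similarly for $\ell_\pi^2$. The well-specified pieces are controlled by $-\Helsq$ and $g_T(\Helsq)$ via the standard arguments of \citet{wong1995probability}.
\item For the difference, condition on $\theta_i$ and compare $\E[\ell_\pi(\theta_i+S_i)]$ with $\E[\ell_\pi(\theta_i+\sigma_i Z)]$, where $S_i=X_i-\theta_i$. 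On $|x|\le C\sqrt{\log n}$ the function $\ell_\pi$ is smooth: normal mixtures with compactly supported mixing measure have derivatives bounded polylogarithmically there. Outside this range, the tails of $S_i$ and of the Gaussian contribute $O(n^{-2})$ by sub-Gaussianity, with $|\ell_\pi(x)|\lesssim 1+x^2$.
\item Apply a smooth-function Edgeworth or Lindeberg-type expansion with $k$ matched moments. This gives $|\E h(S_i)-\E h(\sigma_iZ)|\lesssim \|h^{(k+1)}\|_\infty J_i^{-(k-1)/2}$. Pairing it with a bound on the $(k+1)$-th derivative that scales like $\Hel(f_{\pi,\sigma_i},f_{\pistar,\sigma_i})$ times polylogs, and then applying AM--GM against $\Helsq$, yields $r_{1,i}\lesssim \log^c n\cdot J_i^{-(k-1)}$. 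The same treatment of $\ell_\pi^2$ gives $r_{2,i}$.
\end{enumerate}

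The main obstacle is the last step, that is, obtaining a rate $J_i^{-(k-1)}$ rather than $J_i^{-(k-1)/2}$. This requires that the smooth-test-function discrepancy be proportional to a Hellinger-type size of $\ell_\pi$, so that the first-order CLT error can be absorbed into $-\Helsq$ with only the squared error remaining. I would first try to control the derivatives of $\ell_\pi$ by $\|f_{\pi,\sigma_i}-f_{\pistar,\sigma_i}\|$ on the bulk region, using analyticity of Gaussian mixtures (Cauchy estimates on a strip). I would then relate that quantity to Hellinger distance, losing only logarithmic factors, mirroring whatever was done for $k=2$.
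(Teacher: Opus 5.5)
Your reduction to \cref{thm:main} and \cref{thm:denoising-normal} matches the paper's proof. So do bounding $r_{1,i},r_{2,i}$ directly rather than through \cref{prop:hellinger-bound}, the split into a well-specified part plus a CLT discrepancy, and Bobkov's Wasserstein CLT for $\mathcal{W}_n$. There is, however, no $k=2$ template to mirror: the paper obtains \cref{thm:main_rates} as the case $k=2$ of this very theorem.

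The gap is exactly the step you flag as the main obstacle, and your suggested route fails. Condition \eqref{eq:condition-a} must hold for every $\pi\in\cP([-M,M])$, and uniformly over such $\pi$ a bound $\sup_{|x|\le C\sqrt{\log n}}|\ell_\pi^{(k+1)}(x)|\lesssim \Hel(f_{\pi,\sigma_i},f_{\pistar,\sigma_i})\cdot\mathrm{polylog}(n)$ is false. Take $\sigma_i=1$, $\pistar=\delta_0$ and $\pi=(1-\epsilon)\delta_0+\epsilon\delta_M$. Then $\ell_\pi(x)=\log(1-\epsilon+u(x))$ with $u(x)=\epsilon e^{Mx-M^2/2}$. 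Also $\Helsq\le\frac12\chi^2=\frac12\epsilon^2(e^{M^2}-1)$, so $\Hel\lesssim\epsilon$. But $\ell_\pi^{(m)}(x)\sim M^m u(x)$ whenever $u(x)\to0$. With $\epsilon=n^{-1/2}$ and $x=C\sqrt{\log n}$ you get $|\ell_\pi^{(k+1)}(x)|\gtrsim\Hel\,e^{CM\sqrt{\log n}}$, which exceeds $\Hel\log^c n$ for every $c$. Cauchy estimates on $f_\pi-f_{\pistar}$ cannot fix this, since passing to $\ell_\pi$ divides by $f_{\pistar}$, which is as small as $n^{-A}$ on the window. Pushed through your AM--GM step, this yields only $r_{1,i}\lesssim J_i^{-(k-1)}e^{c\sqrt{\log n}}$. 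Shrinking the window until $e^{cw}$ is polylogarithmic does not help either: it leaves tails $e^{-cw^2}$ that are not $O(J_i^{-(k-1)})$ once $J_i$ grows polynomially in $n$. The root problem is that a one-shot Lindeberg bound $\|h^{(k+1)}\|_\infty J_i^{-(k-1)/2}$ puts a sup norm on the leading term, whereas Hellinger only controls integrated quantities.

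The missing idea is to keep the leading terms as Gaussian expectations and integrate by parts. First, no truncation is needed: $\ell_\pi'$ is a difference of posterior means and $\ell_\pi^{(m)}$, $m\ge2$, are differences of posterior cumulants (\cref{lem:deriv-cumulant-bound}), so every derivative of order $\ge1$ is \emph{globally} bounded. The paper then proceeds as follows.

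\begin{itemize}
\item It runs the Lindeberg swap with a Taylor expansion to order $2k$. The remainder is $J_i\cdot O(J_i^{-k})=O(J_i^{-(k-1)})$, with no Hellinger factor required.
\item Orders $p\le k$ cancel by moment matching.
\item For $k+1\le p\le 2k-1$, a second Lindeberg pass replaces the hybrid sum by a Gaussian $\tilde Y_j\sim N(0,1-J_i^{-1})$, at total cost $O(J_i^{-(k-1)})$.
\item Stein's identity, with $H_p$ the probabilists' Hermite polynomial, then gives
\[
\E\bigl[\ell_\pi^{(p)}(\theta+\tilde Y_j)\mid\theta\bigr]=s^{-p}\,\E\bigl[\ell_\pi(\theta+sZ)H_p(Z)\mid\theta\bigr]\lesssim\sqrt{\E\bigl[\ell_\pi^2(\theta+sZ)\mid\theta\bigr]},\qquad s^2=1-J_i^{-1}.
\]
After averaging over $\theta\sim\pistar$, this is $\lesssim\sqrt{V_2(f_{\pistar,1}\|f_{\pi,1})}\lesssim\sqrt{g_T(\Helsq)}$ by \cref{lem:Vk-hellinger-bound}.
\end{itemize}

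Hence $|\E_{\mu_i}\ell_\pi-\E_{f_{\pistar,\sigma_i}}\ell_\pi|\lesssim J_i^{-(k-1)/2}\sqrt{g_T(\Helsq)}+O(J_i^{-(k-1)})$. Only now does AM--GM against $-\Helsq$ give $r_{1,i}\lesssim J_i^{-(k-1)}\log^2 n+n^{-1}$. The same scheme applied to $\ell_\pi^2$ handles $r_{2,i}$. There the derivatives have at most linear growth, the data are sub-Gaussian, and $\sqrt{\E\ell_\pi^4}\lesssim\Hel\log^2(1/\Hel)$ by \cref{lem:Vk-hellinger-bound} with $k=4$.
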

	
	Therefore the NPMLE requires fewer observations per group if more conditional moments of the
	observed data match those of the approximating Gaussian convolution. A simple
	implication of \cref{thm:moment_adapt} is the case where the $\varepsilon_{ij}$'s
	are symmetrically distributed around $0$ conditional on $\theta_i$. In that case,
	\begin{equation*}
		\bigl(\mathrm{Regret}_n^+\bigr)^2
		\lesssim
		\log^7n\left(\frac{1}{n} + \frac{1}{n}\sum_{i=1}^{n}\frac{1}{J_i^{2}}\right),
	\end{equation*}
	as we had highlighted in \cref{sec:marginal-convergence}.

	\begin{remark}
		The same conclusion as in \cref{thm:moment_adapt} can also be proved in the context of the local parameters setting from the Introduction; see \cref{thm:local} in \cref{subsubsec:local}. The proofs follow immediately by simply rescaling the observables by $\sqrt{J_i}$s and repeating the proof of \cref{thm:moment_adapt}. We skip the details for brevity.
	\end{remark}
	
	\subsection{Convergence under dependence: multivariate normal statistics}\label{sec:gaussian}
	
	
	We discuss the implications of \cref{thm:depcons} in the context of a
	multivariate normal location problem.  To wit, consider the following
	natural extension of the normal sequence model
	\begin{align}
		\label{eq:mvn}
		X_i = \theta_i + \varepsilon_i\,, \qquad i = 1, 2, \ldots, n\,,
	\end{align}
	where $\theta_i \overset{\mathrm{iid}}{\sim} \pst$,
	$(\varepsilon_1, \varepsilon_2, \ldots, \varepsilon_n) \sim N(0, \Sigma_n)$,
	and $(\varepsilon_1, \varepsilon_2, \ldots, \varepsilon_n)$ is independent of
	$(\theta_1, \theta_2, \ldots, \theta_n)$.  In the special case where
	$\Sigma_n$ is a diagonal matrix with potentially unequal entries on the
	diagonal, this model is the same as the one studied in~\cite{Jiang2020}.  In
	the sequel, we provide a tight characterization of $\Sigma_n$ such that the
	class of approximate nonparametric maximum likelihood based estimators
	(as in~\cref{asn:appopt}) is consistent for estimating $\pst$.
	
	\begin{cor}
		\label{cor:mvncons}
		Consider the setup from~\eqref{eq:mvn} and suppose that
		$\hat{\pi}_n, \hat{\sigma}_1, \hat{\sigma}_2, \ldots, \hat{\sigma}_n$
		satisfy~\cref{asn:appopt} where $(\Sigma_n)(i,i) = \sigma_i^2 > 0$ for
		$1 \leq i \leq n$.  We assume that
		\begin{align}
			\label{eq:covcon}
			\|\Sigma_n\|_{\mathrm{op}} = o(n).
		\end{align}
		Then $\hat{\pi}_n$ converges weakly to $\pst$ in probability, and
		\eqref{eq:dephelcon} and~\eqref{eq:depregcon} hold.
	\end{cor}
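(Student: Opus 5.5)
Corollary~\ref{cor:mvncons} follows by checking the hypotheses of Theorem~\ref{thm:depcons}. Assumption~\ref{asn:appopt} is assumed directly. It remains to verify Assumption~\ref{asn:taildef}, $\mathcal{V}_n\to 0$, and $\mathcal{W}_n\to 0$ (with a suitable coupling). The last two are immediate because here normality is exact marginally. Since $\varepsilon_i\sim N(0,\sigma_i^2)$ is independent of $\theta_i\sim\pst$, we have $\mu_i = f_{\pst,\sigma_i}$, so $\mathcal{V}_n=0$. For the regret part, couple with $Z_i:=X_i$ itself. Then $\mathrm{Law}(X_i\mid\theta_i)=N(\theta_i,\sigma_i^2)$, $\mathcal{W}_n=0$, and $(Z_i,\theta_i)$ has the law required in \eqref{eq:equivGauss} for each fixed $i$. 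Only marginal coupling per $i$ is used in the definitions of $\mathcal{W}_n$ and $\mathrm{Regret}_n$.

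For the integrability half of Assumption~\ref{asn:taildef}, I would argue as follows. $|X_i|\le M+|\varepsilon_i|$ with $\varepsilon_i\sim N(0,\sigma_i^2)$, and $\sigma_i\le C$ by Assumption~\ref{asn:appopt}. Hence $\E[X_i^2\mathbf{1}(|X_i|\ge T)]$ is bounded uniformly in $i$ by a Gaussian tail quantity that tends to $0$ as $T\to\infty$. (If one worries that the bound $\sigma_i\le C$ is only imposed through Assumption~\ref{asn:appopt}, note that it is stated there for $\sigma_i$ as well.)

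The main step is the weak law $\frac1n\sum_i (f_i(X_i)-\E f_i(X_i))\to 0$ in probability. I would bound the variance and use Chebyshev, conditioning on $\theta=(\theta_1,\dots,\theta_n)$. Decompose
\[
\Var\Bigl(\tfrac1n\textstyle\sum f_i(X_i)\Bigr) = \E\,\Var\Bigl(\tfrac1n\sum f_i(X_i)\,\Big|\,\theta\Bigr) + \Var\Bigl(\tfrac1n\sum g_i(\theta_i)\Bigr),
\]
where $g_i(t)=\E f_i(t+\varepsilon_i)$. The second term is $O(1/n)$, since the $\theta_i$ are iid and the $g_i$ are uniformly bounded on $[-M,M]$. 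Indeed, $|f_i(x)-f_i(0)|\le |x|+x^2/2$ and the Gaussian moments are bounded.

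For the first term, conditional on $\theta$ the map $\varepsilon\mapsto \frac1n\sum_i f_i(\theta_i+\varepsilon_i)$ is a function of the Gaussian vector $\varepsilon=\Sigma_n^{1/2}\xi$. Apply the Gaussian Poincaré inequality: with $F(\varepsilon)=\frac1n\sum_i f_i(\theta_i+\varepsilon_i)$, we get $\Var(F\mid\theta)\le \E[\nabla F^\top\Sigma_n\nabla F]\le \|\Sigma_n\|_{\mathrm{op}}\,\E\|\nabla F\|^2$. Here $\nabla F=\frac1n(f_i'(X_i))_i$ and $|f_i'(x)|\le 1+|x|$, so $\E\|\nabla F\|^2\le \frac{1}{n^2}\sum_i\E(1+|X_i|)^2=O(1/n)$. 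Hence the variance is $O(\|\Sigma_n\|_{\mathrm{op}}/n + 1/n)\to 0$ by \eqref{eq:covcon}, which gives the weak law.

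This Poincaré step is the crux; it is exactly where $\|\Sigma_n\|_{\mathrm{op}}=o(n)$ enters. The only subtlety is that the $f_i$ are not globally Lipschitz, and I handle that through the gradient-moment bound above rather than a Lipschitz constant. With all hypotheses verified, Theorem~\ref{thm:depcons} gives weak convergence of $\pihat$, \eqref{eq:dephelcon}, and \eqref{eq:depregcon}.
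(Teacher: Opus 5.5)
Your proposal is correct and follows the paper's proof essentially step for step. Both use the law of total variance conditioning on $(\theta_1,\ldots,\theta_n)$, the Gaussian Poincar\'e inequality bounding the conditional variance by $n^{-1}\|\Sigma_n\|_{\mathrm{op}}\,\E\bigl[\frac{1}{n}\sum_{i=1}^n f_i'(X_i)^2\bigr]$ (exactly where $\|\Sigma_n\|_{\mathrm{op}}=o(n)$ enters), and independence of the $\theta_i$ to get an $O(1/n)$ second term after centering at $f_i(0)$. Your checks that $\mathcal{V}_n=\mathcal{W}_n=0$ and of the tail-integrability part of Assumption~\ref{asn:taildef} are steps the paper leaves implicit.
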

	
	As an example, suppose that the $\Sigma_n$s are a sequence of
	equicorrelation matrices given by
	\[
	\Sigma_n
	\,:=\,
	(1-\rho_n)\,I_n + \rho_n\,\mathbf{1}_n\mathbf{1}_n^{\top},
	\qquad \rho_n > -\frac{1}{n-1},\quad n > 2.
	\]
	The eigenvalues of $\Sigma_n$ are $1+(n-1)\rho_n$ with multiplicity $1$ and
	$1-\rho_n$ with multiplicity $n-1$.  Therefore the choice
	$\rho_n > -(n-1)^{-1}$ ensures non-negative definiteness.  In this
	example~\eqref{eq:covcon} holds whenever $\rho_n = o(1)$.  An important
	feature of~\cref{cor:mvncons} is that it does not impose any uniform lower
	bound on the minimum eigenvalue of $\Sigma_n$.  For example, by choosing
	$\rho_n = -n^{-1}$, we have
	\[
	\lambda_{\min}(\Sigma_n) \to 0
	\qquad\text{and}\qquad
	\lambda_{\max}(\Sigma_n) \to 1.
	\]
	In this setting \cref{cor:mvncons} still applies.  We also note that the
	condition~\eqref{eq:covcon} is tight in the worst case. This is because one can
	choose $\Sigma_n = \mathbf{1}_n\mathbf{1}_n^{\top}$ and $\pst=\delta_{\{0\}}$, which corresponds to
	taking $(X_1, \ldots, X_n) = (Z, Z, \ldots, Z)$ where $Z \sim N(0,1)$.
	Here $\|\Sigma_n\|_{\mathrm{op}} = n$ and any maximum likelihood type
	estimator would approximately maximize the criterion
	\[
	\frac{1}{n}\sum_{i=1}^{n} \log \int \phi(X_i - \theta;\,1)\,d\pi(\theta)
	\;=\;
	\log \int \phi(Z - \theta;\,1)\,d\pi(\theta)
	\]
	over $\pi \in \mathcal{P}([-M,M])$.  The criterion itself is now free of
	$n$, so we can never have consistency.

	\paragraph*{Acknowledgments.}  N.I. gratefully acknowledges support from NSF (DMS 2443410).
	
	\bibliographystyle{apalike}
	\bibliography{References}
	
	\appendix
	\section{Proofs of Main Results}\label{sec:pfmain}
	In this Section we will prove our main results, namely Theorems \ref{thm:main}, \ref{thm:deconvolution}, \ref{thm:denoising-normal}, \ref{thm:denoising-strong}, and \cref{prop:hellinger-bound}. The technical Lemmas required in the proofs will be introduced here. We defer the reader to \cref{sec:auxiliary-lemmas} for their proof. Let us begin with two simple properties of Gaussian convolution.
	
	\begin{lemma}\label{lem:log-mixture-bound}
		For any $\pi \in \cP([-M,M])$, we have
		\[
		\left| \log \left(  \int \phi\!\left(x-\theta;{\sigma}\right) d\pi(\theta) \right) \right| \leq \left| \log(\sqrt{2\pi}\sigma) \right| + \frac{1}{2\sigma^2}(|x|+M)^2 \quad \forall\; x \geq 0.
		\]
	\end{lemma}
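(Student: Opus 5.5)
The bound will come from sandwiching the mixture density between an upper and a lower bound on the Gaussian kernel over $\theta\in[-M,M]$, and then taking logarithms. The statement is for $x\ge 0$, but the argument is symmetric, so I treat general $x$ with $|x|$.

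\textbf{Upper bound.} Since $\phi(x-\theta;\sigma)\le (\sqrt{2\pi}\sigma)^{-1}$ and $\pi$ is a probability measure,
\[
\log \int \phi(x-\theta;\sigma)\,d\pi(\theta) \le -\log(\sqrt{2\pi}\sigma).
\]

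\textbf{Lower bound.} For $|\theta|\le M$ we have $|x-\theta|\le |x|+M$. Hence $\phi(x-\theta;\sigma)\ge (\sqrt{2\pi}\sigma)^{-1}\exp\bigl(-(|x|+M)^2/(2\sigma^2)\bigr)$. Integrating against $\pi$, which is supported on $[-M,M]$, preserves this bound and gives
\[
\log \int \phi(x-\theta;\sigma)\,d\pi(\theta) \ge -\log(\sqrt{2\pi}\sigma) - \frac{(|x|+M)^2}{2\sigma^2}.
\]

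\textbf{Combining.} The log-mixture therefore lies in an interval $[-a-b,\,-a]$, where $a=\log(\sqrt{2\pi}\sigma)$ and $b=(|x|+M)^2/(2\sigma^2)\ge 0$. Any number $y$ in this interval satisfies $|y|\le |a|+b$. This is exactly the stated inequality.

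There is no genuine obstacle here. The only point needing care is that $a$ may have either sign, which is why the bound carries the absolute value $|\log(\sqrt{2\pi}\sigma)|$ rather than $\log(\sqrt{2\pi}\sigma)$ itself.
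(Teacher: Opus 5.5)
Your proof is correct and follows essentially the same route as the paper: the paper separates the normalizing constant $|\log(\sqrt{2\pi}\sigma)|$ via the triangle inequality and then bounds the remaining Gaussian integral between $\exp\bigl(-(|x|+M)^2/(2\sigma^2)\bigr)$ and $1$. That is exactly your sandwich argument with the final combining step done slightly earlier. Your remark that the bound in fact holds for all real $x$, not only $x\ge 0$, is also right.
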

	
	\begin{lemma}\label{lem:sup-log-bound}
		For any $\pi \in \cP([-M,M])$ and any $c, T > 0$, we have
		\[
		\sup_{|x| \leq T} \log \left( 1 + \frac{2c}{\int \phi\!\left(x-\theta;{\sigma}\right) d\pi(\theta)} \right) \leq 2\sqrt{2\pi}\, c\sigma \exp\!\left(\tfrac{1}{\sigma^2}(T^2 + M^2)\right).
		\]
	\end{lemma}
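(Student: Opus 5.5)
The statement to prove is Lemma~\ref{lem:sup-log-bound}. The plan is to bound the Gaussian mixture from below uniformly on $|x|\le T$ and then use $\log(1+u)\le u$.

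\textbf{Step 1 (lower bound on the mixture).} For $|x|\le T$ and $|\theta|\le M$ we have $(x-\theta)^2\le 2x^2+2\theta^2\le 2(T^2+M^2)$. Hence
\[
\phi(x-\theta;\sigma)=\frac{1}{\sqrt{2\pi}\sigma}\exp\!\Big(-\frac{(x-\theta)^2}{2\sigma^2}\Big)\ge \frac{1}{\sqrt{2\pi}\sigma}\exp\!\Big(-\frac{T^2+M^2}{\sigma^2}\Big).
\]
Since $\pi$ is a probability measure supported on $[-M,M]$, integrating gives
\[
\int\phi(x-\theta;\sigma)\,d\pi(\theta)\ge (\sqrt{2\pi}\sigma)^{-1}e^{-(T^2+M^2)/\sigma^2}
\]
uniformly over $|x|\le T$.

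\textbf{Step 2 (conclude).} Apply $\log(1+u)\le u$ for $u\ge 0$ with $u=2c/f_{\pi,\sigma}(x)$. The lower bound from Step~1 gives
\[
u\le 2c\sqrt{2\pi}\,\sigma\, e^{(T^2+M^2)/\sigma^2}.
\]
This holds for every $|x|\le T$, so it also holds for the supremum, which is exactly the claimed bound.

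There is no real obstacle. The only point needing care is using the crude inequality $(x-\theta)^2\le 2(T^2+M^2)$, which is what produces the constant $1/\sigma^2$ in the exponent rather than $1/(2\sigma^2)$.
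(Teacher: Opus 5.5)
Your proposal is correct and matches the paper's proof: the paper likewise bounds $(x-\theta)^2 \le (|x|+M)^2 \le 2(T^2+M^2)$ to get a uniform lower bound on the mixture over $|x|\le T$. It then concludes with $\log(1+u)\le u$. Your route via $(x-\theta)^2\le 2x^2+2\theta^2$ is the same inequality in slightly different dress and yields exactly the stated constant.
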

	
	\begin{proof}[Proof of \cref{thm:main}]
		We begin the proof with a number of preliminary notations. Recall that $\fmix{\pi}{\sigma_i}(\cdot) = \int \phi\!\left(\cdot - \theta;{\sigma_i}\right)d\pi(\theta)$. We will use the generic notation $\fmix{\pi}{\cdot}(\cdot) = (\fmix{\pi}{\sigma_1}(\cdot), \ldots, \fmix{\pi}{\sigma_n}(\cdot))$. Choose a sequence $\delta_n$ satisfying
		\[
		\delta_n^2 \geq c^*\!\left(\frac{1}{n}\log^{4}n + \rbar{1} + \rbar{2}\right)=c^*\rho_n^2.
		\]
		The constant $c^*$ in the above display is universal and will be chosen large enough in the proof. Also fix $t \geq 1$ large enough and define
		\[
		\cF_\ell(t) := \biggl\{\fmix{\pi}{\cdot}(\cdot) : \pi \in \cP([-M,M]),\; 2^\ell t\delta_n \leq \sqrt{\avgn \Helsq(\fmix{\pi}{\sigma_i},\, \fmix{\pistar}{\sigma_i})} \leq 2^{\ell+1}t\delta_n\biggr\},
		\]
		for $\ell\ge 0$. Note that
		\begin{align*}
			\cF(t) &:= \bigcup_{\ell=0}^{\infty}\cF_\ell(t) = \biggl\{\fmix{\pi}{\cdot}(\cdot) : \pi \in \cP([-M,M]),\; \avgn \Helsq(\fmix{\pi}{\sigma_i},\, \fmix{\pistar}{\sigma_i}) \geq t^2\delta_n^2\biggr\}.
		\end{align*}
		Also define the interval $S_{n,t} := [-M - 2t\sqrt{\log n},\; M + 2t\sqrt{\log n}]$. By the sub-Gaussianity condition in \cref{assump:subgauss}, we have
		\begin{align}
			\Prob(\exists\, i : |X_i| \notin S_{n,t}) &\leq n\max_{1\leq i\leq n}\Prob\!\left(|X_i| > M + 2t\sqrt{\log{n}}\right) \notag\\
			&\leq nC_1\exp\!\left(-C_2\bigl(M + 2t\sqrt{\log{n}}\bigr)^2\right) \notag\\
			&\leq nC_1\exp\!\left(-C_2M^2 - 4C_2t^2\log{n}\right) \notag\\
			&= C_1\exp(-C_2M^2)\exp\!\left((1-4C_2t^2)\log{n}\right) \notag\\
			&\leq C_1\exp(-C_2M^2)\exp\!\left(-2C_2t^2\log{n}\right),\label{eq:proof-subgauss-tail}
		\end{align}
		where the last inequality follows by choosing $t \geq 1$ large enough. By choosing $C_1' := C_1\exp(-C_2M^2)$ and $C_2' = 2C_2$, we get
		\[
		\Prob(\exists\, i : |X_i| \notin S_{n,t}) \leq C_1'\exp(-C_2't^2\log{n}) = C_1'n^{-C_2't^2}.
		\]
		Define $\tilde{S}_{n,t}:=[-M-4t\sqrt{\log{n}}, M+4t\sqrt{\log{n}}]$. Construct a bump function $\psi:\R\to [0,1]$ which is infinitely differentiable such that $$\psi(x)=\begin{cases} 0 & \mbox{if } x\le 0, \\ 1 & \mbox{if } x\ge 1.\end{cases}$$
		Then define 
		$$\mcn(x):=1-\psi\left(\frac{|x|-M-2t\sqrt{\log{n}}}{2t\sqrt{\log{n}}}\right).$$
		Clearly if $|x|\le M+2t\sqrt{\log{n}}$ then $\mcn(x)=1$, whereas if $|x|\ge M+4t\sqrt{\log{n}}$ then $\mcn(x)=0$. As $\mcn(x)$ is identically $1$ in a neighborhood of $x=0$, we also have that $\mcn$ is infinitely differentiable. In particular, 
		$$\lVert \mcn'\rVert_{\infty}\le \frac{1}{2t\sqrt{\log(n)}}\lVert \psi'\rVert_{\infty}\quad \mbox{and} \quad \lVert \mcn''\rVert_{\infty}\le \frac{1}{4t^2 \log{(n)}}\lVert \psi''\rVert_{\infty}.$$
		Our proof strategy involves truncating all the $X_i$'s within the interval $S_{n,t}$. To wit, note that as $\pihat$ satisfies \eqref{eq:likelihood-ineq}, we have:
		\begin{small}
			\begin{align}
				&\;\;\;\;\Prob\!\left(\avgn \Helsq(\fmix{\pihat}{\cdot},\, \fmix{\pistar}{\cdot}) \geq t^2\delta_n^2\right) \notag\\
				&\leq \Prob\!\left(\exists\, \fmix{\pi}{\cdot} \in \cF(t):\; \avgn \log \fmix{\pi}{\sigma_i}(X_i) \geq \avgn \log \fmix{\pistar}{\sigma_i}(X_i) - \frac{q\log{n}}{n}\right) \notag\\
				&\leq \Prob\!\left(\exists\, \fmix{\pi}{\cdot} \in \cF(t):\; \avgn (\log \fmix{\pi}{\sigma_i}(X_i))\mcn(X_i) \geq \avgn (\log \fmix{\pistar}{\sigma_i}(X_i))\mcn(X_i) - \frac{q\log{n}}{n}\right) \notag\\
				&\qquad + \Prob(\exists\, i : X_i \notin S_{n,t}) \notag\\
				&\leq \sum_{\ell=0}^{\infty}\Prob\!\left(\exists\, \fmix{\pi}{\cdot} \in \cF_\ell(t):\; \avgn (\log \fmix{\pi}{\sigma_i}(X_i))\mcn(X_i) \geq \avgn (\log \fmix{\pistar}{\sigma_i}(X_i))\mcn(X_i) - \frac{q\log{n}}{n}\right) \notag\\
				&\qquad + C_1'n^{-C_2't^2},\label{eq:proof-shell-decomp}
			\end{align}
		\end{small}
		\noindent where the last inequality follows from~\eqref{eq:proof-subgauss-tail}. The rest of the proof is devoted to bounding the first term in the above display.
		
		Fix $\eta := n^{-s^2}$ for some $s \geq 1$ large enough and to be chosen later depending on $t$. Define $\sigvec := (\sigma_1, \sigma_2, \ldots, \sigma_n)$ and
		\[
		\cF_{\mathrm{Gauss},\sigvec} := \bigl\{(\fmix{\pi}{\sigma_1}(\cdot), \ldots, \fmix{\pi}{\sigma_n}(\cdot)),\; \pi \in \cP([-M,M])\bigr\}.
		\]
		Let $\fmix{\pi_{0,1}}{\cdot}, \fmix{\pi_{0,2}}{\cdot}, \ldots, \fmix{\pi_{0,N}}{\cdot}$ denote an $\eta$-covering subset of $\cF_{\mathrm{Gauss},\sigvec}$ under the pseudometric given by $\|\cdot\|_{\infty,\tilde{S}_{n,t}}$ where
		\[
		\|\fmix{\pi}{\cdot} - \fmix{\tilde{\pi}}{\cdot}\|_{\infty,\tilde{S}_{n,t}} := \max_{1\le i\le n} \sup_{x\in \tilde{S}_{n,t}} |\fmix{\pi}{\sigma_i}(x) - \fmix{\tilde{\pi}}{\sigma_i}(x)|, \qquad \text{where } h,\tilde{h}\in\cF_{\mathrm{Gauss},\sigvec}.
		\]
		It follows as a consequence of \citet[Lemma~4]{Jiang2020} that there exists a constant $C$ depending on $M, k, K, s$, and $t$ such that
		\begin{equation}
			\log N \leq C\log^2n.\label{eq:proof-covering}
		\end{equation}
		For any $\ell \geq 0$, let $J_\ell \subseteq \{j : 1\leq j\leq N\}$ be the subset of all $j$ for which there exist $\fmix{\pi_{0,j}}{\cdot}\in\cF_{\mathrm{Gauss},\sigvec}$ satisfying
		\begin{equation}
			\|\fmix{\pi_{0,j}}{\cdot} - \fmix{\pi_j}{\cdot}\|_{\infty,\tilde{S}_{n,t}} \leq \eta \quad\text{and}\quad \avgn \Helsq(\fmix{\pi_{0,j}}{\sigma_i},\, \fmix{\pistar}{\sigma_i}) \geq 2^{2\ell}t^2\delta_n^2.\label{eq:proof-cover-cond}
		\end{equation}
		By~\eqref{eq:proof-covering}, we note that
		\[
		\sup_{\ell\geq 0}\log|J_\ell| \leq C\log^2n.
		\]
		Note that for any $\fmix{\pi}{\cdot}(\cdot)\in\cF_\ell(t)$, by the definition of a covering set, we have the existence of some $j$ such that $\|\fmix{\pi}{\cdot} - \fmix{\pi_{j}}{\cdot}\|_{\infty,\tilde{S}_{n,t}} \leq \eta$. As $\fmix{\pi}{\cdot}(\cdot)\in\cF_\ell(t)$, it follows that such a $j\in J_\ell$. Pick the corresponding $\fmix{\pi_{0,j}}{\cdot}$ and note that by~\eqref{eq:proof-cover-cond}, we have
		\[
		\|\fmix{\pi_{0,j}}{\cdot} - \fmix{\pi}{\cdot}\|_{\infty,\tilde{S}_{n,t}} \leq 2\eta \quad\text{and}\quad \avgn \Helsq(\fmix{\pi_{0,j}}{\cdot},\, \fmix{\pistar}{\cdot}) \geq 2^{2\ell}t^2\delta_n^2.
		\]
		Therefore, for any $\fmix{\pi}{\cdot}\in\cF_\ell(t)$, we have
		\begin{align*}
			\avgn (\log \fmix{\pi}{\sigma_i}(X_i))\,\mcn(X_i) &\leq \avgn (\log(\fmix{\pi_{0,j}}{\sigma_i}(X_i) + 2\eta))\,\mcn(X_i)\\
			&\leq \max_{j\in J_\ell}\avgn (\log(\fmix{\pi_{0,j}}{\sigma_i}(X_i) + 2\eta))\,\mcn(X_i).
		\end{align*}
		In the above inequality we have used the fact that $\mcn(x)=0$ for $x\notin \tilde{S}_{n,t}$.
		As a consequence, we get:
		\begin{small}
			\begin{align}
				&\;\;\;\;\Prob\!\left(\exists\,\fmix{\pi}{\cdot}\in\cF_\ell(t):\; \avgn (\log \fmix{\pi}{\sigma_i}(X_i))\,\mcn(X_i)\geq \avgn (\log \fmix{\pistar}{\sigma_i}(X_i))\,\mcn(X_i)-\frac{q\log{n}}{n}\right) \notag\\
				&\leq |J_\ell|\max_{j\in J_\ell}\Prob\!\left(\avgn (\log(\fmix{\pi_{0,j}}{\sigma_i}(X_i)+2\eta))\,\mcn(X_i) \geq \avgn (\log \fmix{\pistar}{\sigma_i}(X_i))\,\mcn(X_i)-\frac{q\log{n}}{n}\right) \notag\\
				&\leq \exp(C\log^2n)\max_{j\in J_\ell}\Prob\!\left(\avgn \mcn(X_i)\log\!\left(\frac{\fmix{\pi_{0,j}}{\sigma_i}(X_i)+2\eta}{\fmix{\pistar}{\sigma_i}(X_i)}\right) \geq -\frac{q\log{n}}{n}\right).\label{eq:proof-union-bound}
			\end{align}
		\end{small}
		By using \cref{lem:sup-log-bound}, we observe that:
		\begin{align*}
			\avgn \mcn(X_i)\log\!\left(1+\frac{2\eta}{\fmix{\pi_{0,j}}{\sigma_i}(X_i)}\right) &\lesssim \eta\exp\!\left(\frac{1}{k^2}\bigl(M^2+(M+4t\sqrt{\log{n}})^2\bigr)\right)\\
			&\lesssim \eta\exp\!\left(32\cdot\frac{t^2}{k^2}\log{n}\right)\\
			&= n^{-s^2+32t^2k^{-2}} \lesssim n^{-3},
		\end{align*}
		by choosing $s$ large enough depending on $t$. For all large enough $n$, we can then bound \eqref{eq:proof-union-bound} as 
		\begin{align}\label{eq:proof-union-bound-1}
			\exp(C\log^2n)\max_{j\in J_\ell}\Prob\!\left(\avgn \mcn(X_i)\log\!\left(\frac{\fmix{\pi_{0,j}}{\sigma_i}(X_i)}{\fmix{\pistar}{\sigma_i}(X_i)}\right) \geq -\frac{2q\log{n}}{n}\right)
		\end{align}
		Define
		\[
		A_{i,j}^{(n)}(x) := \mcn(x)\log\!\left(\frac{\fmix{\pi_{0,j}}{\sigma_i}(x)}{\fmix{\pistar}{\sigma_i}(x)}\right).
		\]
		To bound~\eqref{eq:proof-union-bound-1}, we apply Bernstein's inequality \citep[Theorem~2.9.1]{vershynin2018high}, which requires bounds on the supremum norm, the mean, and the variance of $A_{i,j}^{(n)}(\cdot)$.
		
		\medskip
		\noindent\ \emph{Uniform norm bound}. 
		First we note that for any $\pi\in\cP([-M,M])$, we have
		\[
		\fmix{\pi}{\sigma_i}(x) \leq \frac{1}{k}\cdot\frac{1}{\sqrt{2\pi}} 
		\]
		for all $x\in\R$. Moreover for $x\in\tilde{S}_{n,t}$, we also have
		\begin{align*}
			\fmix{\pi}{\sigma_i}(x) &\geq \frac{1}{K}\inf_{\substack{x\in \tilde{S}_{n,t}\\ \theta\in[-M,M]}}\phi\!\left(\frac{x-\theta}{\sigma_i}\right)\\
			&\geq \frac{1}{K}\exp\!\left(-\frac{1}{2k^2}\bigl(2M^2+4M^2+32t^2\log(n)\bigr)\right)\geq \exp\!\left(-17\cdot\frac{t^2}{k^2}\log{n}\right).
		\end{align*}
		Also for $x\notin \tilde{S}_{n,t}$, by definition $\mcn(x)=0$. 
		As a result, we have for any $n$ large enough
		\begin{equation}
			\max_{1\leq i\leq n}\max_{1\leq j\leq N}\|A_{i,j}^{(n)}\|_\infty \lesssim t^2\log n.\label{eq:proof-supnorm}
		\end{equation}
		
		\medskip
		\noindent\ \emph{Bound on the mean}.
		We note the following identity for any $j\in J_\ell$:
		\begin{align}
			&\;\;\;\;\avgn \E\, A_{i,j}^{(n)}(X_i) \notag \\ &= \avgn \int_{x\in \tilde{S}_{n,t}}\log\!\left(\frac{\fmix{\pi_{0,j}}{\sigma_i}(x)}{\fmix{\pistar}{\sigma_i}(x)}\right)d\mu_i(x) \notag\\
			&= - \avgn \int_{x\in \tilde{S}_{n,t}^c}\log\!\left(\frac{\fmix{\pi_{0,j}}{\sigma_i}(x)}{\fmix{\pistar}{\sigma_i}(x)}\right)d\mu_i(x) + \avgn \int \log\!\left(\frac{\fmix{\pi_{0,j}}{\sigma_i}(x)}{\fmix{\pistar}{\sigma_i}(x)}\right)d\mu_i(x).\label{eq:proof-mean-decomp}
		\end{align}
		
		\noindent Let us bound the first term of~\eqref{eq:proof-mean-decomp}. By \cref{lem:log-mixture-bound}, we have the following:
		\[
		\avgn \int_{x\in \tilde{S}_{n,t}^c}\left|\log\!\left(\frac{\fmix{\pi_{0,j}}{\sigma_i}(x)}{\fmix{\pistar}{\sigma_i}(x)}\right)\right|d\mu_i(x) \leq \avgn \int_{x\in \tilde{S}_{n,t}^c}(1+x^2)\,d\mu_i(x) \lesssim \frac{1}{n^{3}},
		\]
		where the last inequality follows from~\eqref{eq:proof-subgauss-tail} by choosing $t$ large enough. 
		
		\noindent For the second term in \eqref{eq:proof-mean-decomp}, we note that from~\eqref{eq:proof-cover-cond}, we have
		\begin{align*}
			\avgn \int \log\!\left(\frac{\fmix{\pi_{0,j}}{\sigma_i}(x)}{\fmix{\pistar}{\sigma_i}(x)}\right)d\mu_i(x) &\lesssim -\avgn \Helsq(\fmix{\pi_{0,j}}{\sigma_i},\, \fmix{\pistar}{\sigma_i}) + \avgn r_{1,i}\\
			&\leq -2^{2\ell}t^2\delta_n^2 + \rbar{1} \leq -2^{2\ell-1}t^2\delta_n^2,
		\end{align*}
		where the final two inequalities follow from the fact that $j\in J_\ell$ and our choice of $\delta_n$, by choosing $c^*>2$ large enough. Combining the above observations with~\eqref{eq:proof-mean-decomp}, we have:
		\begin{equation}
			\avgn \E\, A_{i,j}^{(n)}(X_i) \lesssim \frac{1}{n^3} - 2^{2\ell}t^2\delta_n^2 \lesssim -2^{2\ell}t^2\delta_n^2\label{eq:proof-mean-bound}
		\end{equation}
		for any $j\in J_\ell$, by leveraging the choice of $\delta_n$ above.
		
		\medskip
		\noindent \emph{Bound on the variance}.
		We note the following inequality for any $j\in J_\ell$:
		\begin{align}
			&\;\;\;\;\Var\!\left(\avgn A_{i,j}^{(n)}(X_i)\right) \nonumber \\ &\leq \frac{1}{n}\sum_{i=1}^{n}\E\bigl(A_{i,j}^{(n)}(X_i)\bigr)^2 \notag\\
			&\lesssim \avgn \int \log^2\!\left(\frac{\fmix{\pi_{0,j}}{\sigma_i}(x)}{\fmix{\pistar}{\sigma_i}(x)}\right)d\mu_i(x) + \avgn \int_{x\in S_{n,t}^c}\log^2\!\left(\frac{\fmix{\pi_{0,j}}{\sigma_i}(x)}{\fmix{\pistar}{\sigma_i}(x)}\right)d\mu_i(x).\label{eq:proof-var-decomp}
		\end{align}	
		For the first term in~\eqref{eq:proof-var-decomp}, we note that from condition~\eqref{eq:condition-b}, we have:
		\begin{align*}
			\avgn \int \log^2\!\left(\frac{\fmix{\pi_{0,j}}{\sigma_i}(x)}{\fmix{\pistar}{\sigma_i}(x)}\right)d\mu_i(x) &\lesssim \avgn g_{T}\bigl(\Helsq(\fmix{\pi_{0,j}}{\sigma_i},\, \fmix{\pistar}{\sigma_i})\bigr) + \avgn r_{2,i}.
		\end{align*}
		Now we observe that by increasing $T$, if necessary, the function $g_{T}(\cdot)$ is increasing and concave. Therefore as $j\in J_\ell$, it follows that
		\begin{align*}
			\avgn \int \log^2\!\left(\frac{\fmix{\pi_{0,j}}{\sigma_i}(x)}{\fmix{\pistar}{\sigma_i}(x)}\right)d\mu_i(x) &\leq g_{T}\!\left(\avgn \Helsq(\fmix{\pi_{0,j}}{\sigma_i},\, \fmix{\pistar}{\sigma_i})\right) + \rbar{2}\\
			&\leq g_{T}\!\left(2^{2\ell+2}t^2\delta_n^2\right) + \rbar{2}.
		\end{align*}
		Finally we bound the third term in~\eqref{eq:proof-var-decomp}. By \cref{lem:log-mixture-bound}, we have the following:
		\[
		\avgn \int_{x\in S_{n,t}^c}\log^2\!\left(\frac{\fmix{\pi_{0,j}}{\sigma_i}(x)}{\fmix{\pistar}{\sigma_i}(x)}\right)d\mu_i(x) \lesssim \avgn \int_{x\in S_{n,t}^c}(1+x^4)\,d\mu_i(x) \lesssim \frac{1}{n^3},
		\]
		where the last inequality again follows from~\eqref{eq:proof-subgauss-tail} by choosing $t$ large enough. Combining the above observations with~\eqref{eq:proof-var-decomp}, we get the following bound:
		\begin{equation}
			\Var\!\left(\avgn A_{i,j}^{(n)}(X_i)\right) \lesssim \rbar{2} + g_{T}\!\left(2^{2\ell+2}t^2\delta_n^2\right)\label{eq:proof-var-bound}
		\end{equation}
		for any $j\in J_\ell$, by leveraging the choice of $\delta_n$, with $c^*$ large enough.
		
		\medskip
		\noindent \emph{Applying Bernstein's inequality}.
		We now have all the ingredients to apply Bernstein's inequality. Combining the uniform bound in~\eqref{eq:proof-supnorm}, the bound on the mean from~\eqref{eq:proof-mean-bound}, and the variance bound from~\eqref{eq:proof-var-bound}, by choosing $t$ large enough, we obtain
		\begin{align}\label{eq:conbd1}
			&\;\;\;\;\;\Prob\!\left(\avgn A_{i,j}^{(n)}(X_i) \geq - \frac{2q\log{n}}{n}\right)\notag \\
			&\leq \Prob\!\left(\avgn \bigl(A_{i,j}^{(n)}(X_i)-\E\, A_{i,j}^{(n)}(X_i)\bigr) \geq -\avgn \E\, A_{i,j}^{(n)}(X_i)-\frac{2q\log{n}}{n}\right)\notag \\
			&\leq \Prob\!\left(\avgn \bigl(A_{i,j}^{(n)}(X_i)-\E\, A_{i,j}^{(n)}(X_i)\bigr) \gtrsim 2^{2\ell}t^2\delta_n^2\right)\notag \\
			&\leq 2\exp\!\left(-c_1\cdot\frac{2^{4\ell}n^2t^4\delta_n^4}{n\rbar{2}+ng_{T}(2^{2\ell+2}t^2\delta_n^2)}\right) + 2\exp\!\left(-c_1\cdot\frac{2^{2\ell}nt^2\delta_n^2}{t^2\log{n}}\right)
		\end{align}
		for some constant $c_1>0$.
		
		\noindent Combining the above observation with~\eqref{eq:proof-union-bound}, we get:
		\begin{align}
			&\;\;\;\;\;\Prob\!\left(\exists\,\fmix{\pi}{\cdot}\in\cF_\ell(t):\; \avgn (\log \fmix{\pi}{\sigma_i}(X_i))\mcn(X_i) \geq \avgn (\log \fmix{\pistar}{\sigma_i}(X_i))\mcn(X_i)-\frac{q\log{(n)}}{n}\right) \notag\\
			&\leq 2\exp\!\left(-c_1\cdot\frac{2^{4\ell}n^2t^4\delta_n^4}{n\rbar{2}+ng_{T}(2^{2\ell+2}t^2\delta_n^2)} + C\log^2n\right)  + 2\exp\!\left(-c_1\cdot\frac{2^{2\ell}nt^2\delta_n^2}{t^2\log{n}} + C\log^2n\right).\label{eq:proof-bernstein}
		\end{align}	
		Let us now bound the two terms in~\eqref{eq:proof-bernstein}. For the first term in~\eqref{eq:proof-bernstein}, we note that by the construction of $\delta_n$, we have:
		\[
		n\rbar{2} \leq \frac{1}{c^*}n\delta_n^2
		\]
		and
		\begin{align*}
			ng_{T}(2^{2\ell+2}t^2\delta_n^2) &= n\cdot 2^{2\ell+2}t^2\delta_n^2\cdot\log^{2}\!\left(\frac{8}{2^{\ell+1}t\delta_n}\right)\\
			&\lesssim n\cdot 2^{2\ell+2}t^2\delta_n^2\cdot\log^{2}n\\
			&\lesssim \frac{2^{2\ell+2}t^2\cdot n^2\delta_n^4}{c^*\cdot\log^2{n}},
		\end{align*}
		where the above $\lesssim$ hides a constant depending on $\alpha$ and $T$, but not on $c^*$. As a result, we have:
		\[
		n\rbar{2} + ng_{T}(2^{2\ell+2}t^2\delta_n^2) \lesssim \frac{2^{2\alpha+3}t^2n^2\delta_n^4}{c^*\log^2n}.
		\]
		Therefore, we observe that
		\begin{align*}
			-c_1\cdot\frac{2^{4\ell}n^2t^4\delta_n^4}{n\rbar{2}+ng_{T}(2^{2\ell+2}t^2\delta_n^2)} + C\log^2n &\lesssim -c_1\cdot c^*\cdot 2^{2\ell-3}t^2\log^2n + C\log^2n\\
			&\lesssim -c^*\cdot 2^{2\ell}t^2\log^2n,
		\end{align*}
		where the last inequality follows by choosing $c^*$ large enough.
		As a consequence, we have:
		\[
		2\exp\!\left(-c_1\cdot\frac{2^{4\ell}n^2t^4\delta_n^4}{n\rbar{2}+ng_{T}(2^{2\ell+2}t^2\delta_n^2)} + C\log^2n\right) \lesssim (n^{-2})^{2^{2\ell}}.
		\]
		For the second term in~\eqref{eq:proof-bernstein}, we note that
		\begin{align*}
			-c_1\cdot\frac{2^{2\ell}n\delta_n^2}{\log n} + C\log^2n &\lesssim -c_1 c^*\cdot 2^{2\ell}\log^2n + C\log^2n\\
			&\lesssim -c^*\cdot 2^{2\ell}\log^2n,
		\end{align*}
		where the last inequality follows again by choosing $c^*$ large enough.
		As a result, we obtain
		\[
		2\exp\!\left(-c_1\cdot\frac{2^{2\ell}n\delta_n^2}{\log n} + C\log^2n\right) \lesssim (n^{-2})^{2^{2\ell}}.
		\]
		By combining the above observations with~\eqref{eq:proof-bernstein}, we get:
		\begin{align*}
			&\;\;\;\;\Prob\!\left(\exists\,\fmix{\pi}{\cdot}\in\cF_\ell(t):\; \avgn (\log \fmix{\pi}{\sigma_i}(X_i))\mcn(X_i) \geq \avgn (\log \fmix{\pistar}{\sigma_i}(X_i))\mcn(X_i)\right) \nonumber \\ & \lesssim (n^{-2})^{2^{2\ell}}.
		\end{align*}
		Using the above observation in~\eqref{eq:proof-shell-decomp}, we have for all $t\geq 1$ large enough, the following inequality:
		\begin{align*}
			\Prob\!\left(\avgn \Helsq(\fmix{\pihat}{\sigma_i},\, \fmix{\pistar}{\sigma_i}) \geq t^2\delta_n^2\right) &\lesssim n^{-2} + \sum_{\ell=0}^{\infty}(n^{-2})^{2^{2\ell}} \lesssim \frac{1}{n^2}.
		\end{align*}
		As $n^{-1}\sum_{i=1}^{n}\Helsq(\fmix{\pihat}{\sigma_i},\, \fmix{\pistar}{\sigma_i})\leq 1$, we have
		\begin{align*}
			\E\!\left(\avgn \Helsq(\fmix{\pihat}{\sigma_i},\, \fmix{\pistar}{\sigma_i})\right) &\leq t^2\delta_n^2 + \Prob\!\left(\avgn \Helsq(\fmix{\pihat}{\sigma_i},\, \fmix{\pistar}{\sigma_i}) \geq t^2\delta_n^2\right)\\
			&\lesssim t^2\delta_n^2 + n^{-2} \lesssim \delta_n^2.
		\end{align*}
		This completes the proof.
	\end{proof}
	
	\begin{proof}[Proof of \cref{thm:deconvolution}]
		The bound on the $\Wass$ distance was established in~\citet[Theorem 2]{Nguyen2013}, which yields
		\[
		\Wass^2(\pihat, \pistar) \leq C\bigl(-\log\bigl(\TV\bigl(\cN(0, \sigma_i^2) * \pihat,\; \cN(0, \sigma_i^2) * \pistar\bigr)\bigr)\bigr)^{-1}
		\]
		for all $1 \leq i \leq n$, some fixed $C > 0$. Here $\TV$ denotes the total variation distance. As a result, we have:
		\begin{align*}
			\exp\!\left(-\frac{C}{\Wass^2(\pihat, \pistar)}\right) &\leq \TV\bigl(\cN(0, \sigma_i^2) * \pihat,\; \cN(0, \sigma_i^2) * \pistar\bigr) \\
			&\leq \sqrt{2}\,\Hel\bigl(\cN(0, \sigma_i^2) * \pihat,\; \cN(0, \sigma_i^2) * \pistar\bigr).
		\end{align*}
		This implies that
		\[
		\exp\!\left(-\frac{2C}{\Wass^2(\pihat, \pistar)}\right) \leq \frac{2}{n}\sum_{i=1}^{n}\Helsq\bigl(\cN(0, \sigma_i^2) * \pihat,\; \cN(0, \sigma_i^2) * \pistar\bigr).
		\]
		By \cref{thm:main}, the above display implies
		\[
		\Prob\!\left(\exp\!\left(-\frac{2C}{\Wass^2(\pihat, \pistar)}\right) \geq t\,\rho_n^2\right) \lesssim \frac{1}{n^2}
		\]
		for large enough $t > 1$. As $\pihat, \pistar \in \cP([-M, M])$, the moment bound on $\Wass(\pihat, \pistar)$ follows by noting that 
		\begin{align*}
			&\;\;\;\;\;\E\Wass^2(\pihat,\pst) \\ & = \E\left[\Wass^2(\pihat,\pst)\mathbf{1}\left(\exp\!\left(-\frac{2C}{\Wass^2(\pihat, \pistar)}\right) \leq t\,\rho_n^2 \right)\right]+\E\left[\Wass^2(\pihat,\pst)\mathbf{1}\left(\exp\!\left(-\frac{2C}{\Wass^2(\pihat, \pistar)}\right) > t\,\rho_n^2 \right)\right] \\ &\lesssim \big(1+\log{(1+\rho_n^{-1})}\big)^{-1} + 4M^2 \PP\!\left(\exp\!\left(-\frac{2C}{\Wass^2(\pihat, \pistar)}\right) \geq t\,\rho_n^2\right)\lesssim \big(1+\log{(1+\rho_n^{-1})}\big)^{-1}.
		\end{align*}
		This completes the proof.
	\end{proof}
	
	\begin{proof}[Proof of \cref{thm:denoising-normal}]
		We can decompose the positive part of regret into: 
		\begin{align}
			&\pr{\sqrt{\avgn \E|\theta_i - h_{\pihat}(X_i; \sigma_i)|^2} - \sqrt{\rmis
					(\pst)}}_+
			\nonumber \\
			&\leq \underbrace{\left|\sqrt{\avgn \E|\theta_i - h_{\pihat}(X_i; \sigma_i)|^2} - 
				\sqrt{\avgn
					\E|\theta_i - h_{\pihat}(Z_i; \sigma_i)|^2}\right|}_{\text{(I)}} \\
			&\quad + \underbrace{
				\pr{
					\sqrt{\avgn \E|\theta_i - h_{\pihat}(Z_i; \sigma_i)|^2} - \sqrt{\rmis(\pistar)}}_+}_{
				\text{(II)}}. \label{eq:piece-I-split}
		\end{align}
		
		\medskip\noindent\emph{Bound for Term (I)}. For any $\pi \in \cP([-M,M])$, the Gaussian posterior mean $h_\pi(\cdot; \sigma)$ satisfies
		\begin{align}\label{eq:lipschitzmean}
			\frac{\partial}{\partial x} h_\pi(x; \sigma) = \frac{\Var_\pi(\theta \mid X = x)}{\sigma^2} \leq \frac{M^2}{\sigma^2} \leq \frac{M^2}{k^2},
		\end{align}
		since $\theta \in [-M, M]$ under any posterior derived from a prior supported on $[-M,M]$. In particular, $h_\pi(\cdot; \sigma_i)$ is $(M^2/k^2)$-Lipschitz uniformly over $\pi \in \cP([-M,M])$ and $1 \leq i \leq n$. Therefore,
		\begin{align}\label{eq:lipcoup}
			\text{(I)} &\leq \sqrt{\avgn \E\bigl|h_{\pihat}(X_i; \sigma_i) - h_{\pihat}(Z_i; \sigma_i)\bigr|^2} \leq \frac{M^2}{k^2}\sqrt{\avgn \E|X_i - Z_i|^2}.
		\end{align}
		We couple $(X_i, Z_i)$ conditionally on $\theta_i$ by choosing an optimal $\Wtwo$-coupling of $\mathrm{Law}(X_i \mid \theta_i)$ and $\cN(\theta_i, \sigma_i^2)$, independently across $i$, to obtain
		\begin{equation}\label{eq:Ia-bound}
			\text{(I)} \leq \frac{M^2}{k^2}\,\mathcal{W}_n.
		\end{equation}
		
		\medskip\noindent\emph{Bound for Term (II)}. We claim that \[
		\text{(II)} \lesssim \pr{
			\frac{1}{n}\sum_{i=1}^n \E|h_{\hat\pi_n,\tau_n}(Z_i, \sigma_i) - h_{\pistar,
				\tau_n}(Z_i; \sigma_i)|^2
		}^{1/2} +\frac{1}{n} \numberthis \label{eq:decompose-term-ii}
		\]
		where $h_{\pi,\taun}(x; \sigma_i) := x + \sigma_i^2 \fmix{\pi}{\sigma_i}'(x)/(
		\fmix{\pi}
		{\sigma_i}(x) \vee (\taun/\sigma_i))$ for some $\tau_n \rateeq n^{-C}$ to be chosen.
		Upon
		verifying \eqref{eq:decompose-term-ii}, we may apply the argument for Theorem 9 
		(Supplement D) in \citet{Soloff2024} (also in Supplement OA3.2 in 
		\citet{chen2024empirical}) to show that the regularized regret is dominated by the
		squared Hellinger rate in \cref{thm:main},\footnote{This application uses (i)
			$\pihat,\pistar$ are both compactly supported, (ii) \cref{assump:lowerbound}, (iii)
			the truncation $\tau_n$ has $\log(1/\tau_n) \lesssim \log n$, and (iv) $\Prob\left(
			\avgn \Helsq(f_{\pihat,\sigma_i},f_{\pistar,\sigma_i})
			> C\rho_n^2
			\right)\lesssim n^{-2}$. With these inputs the verification is a straightforward
			application of \citet{Soloff2024} and \citet{chen2024empirical}.}
		\[
		\frac{1}{n} \sum_{i=1}^n \E |h_{\hat\pi_n,\tau_n}(Z_i, \sigma_i) - h_{\pistar;
			\tau_n}(Z_i; \sigma_i)|^2 \lesssim \rho_n^2 \log^3 n. \numberthis 
		\label{eq:Ib-bound}
		\]
		The theorem statement then follows by combining with term (I). 
		
		We thus justify \eqref{eq:decompose-term-ii}. By the triangle inequality, we have that
		\[
		\text{(II)} \le \sqrt{
			\frac{1}{n} \sum_{i=1}^n \E
			|h_{\hat\pi_n}(Z_i;\sigma_i) - h_{\pistar}(Z_i;\sigma_i)|^2
		} =: \norm{
			h_{\hat\pi_n} - h_{\pistar}
		}_{2,n}
		\]
		where $\norm{U}^2_{2,n}  := \frac1n \sum_{i=1}^n \E|U_i|^2$. We thus have \[
		\text{(II)} \le  \sqrt{\avgn \E\bigl|h_{\pihat,\tau_n}(Z_i; \sigma_i) - h_
			{\pistar,\tau_n}(Z_i;
			\sigma_i)\bigr|^2} + \norm{h_{\hat \pi_n} - h_{\hat\pi_n, \tau_n}}_{2,n} + \norm{h_{\pistar} - h_{\pistar, \tau_n}}_{2,n}.
		\]
		
		It remains to show that $\norm{h_{\pi} - h_{\pi, \tau_n}}_{2,n} \lesssim 1/n$ for $\pi =
		\hat\pi_n$ or
		$\pistar$. Define the event \[
		\Omega_n(B) := \br{\max_{i\in [n]} |Z_i| \le B\sqrt{\log n}}.
		\]
		By  bounded variance 
		\cref{assump:lowerbound}, we have that $Z$ is
		marginally sub-Gaussian and thus $B$ can be chosen large enough such that $\Prob
		(\Omega_n(B)^{c}) \lesssim n^{-10}$. 
		
		On the event $\Omega_n(B)$, uniformly over $i \in [n]$, $\pi \in \mathcal P([-M,M])$,
		we can choose constants $c_B, A_B$ such that \[
		f_{\pi, \sigma_i}(Z_i) \ge \frac{1}{\sigma_u\sqrt{2\pi}} \exp\pr{-
			\frac{(B\sqrt{\log n} + M)^2}{2\sigma_\ell^2} 
		} \ge c_B n^{-A_B}
		\]
		where $\sigma_u$ is a uniform upper bound on $\sigma_i$ and $\sigma_\ell > 0$ is a
		uniform lower bound by \cref{assump:lowerbound}. Thus for both $\pi = \hat \pi_n,
		\pistar$, upon choosing $\tau_n := \sigma_\ell c_B n^{-A_B}$, we have \[
		\norm{h_\pi - h_{\pi,\tau_n}}_{2,n}^2 = \frac{1}{n}\sum_{i=1}^n \E[\one(\Omega_n(B)^c) 
		(h_\pi(Z_i, \sigma_i) - h_{\pi, \tau_n}(Z_i, \sigma_i))^2
		] \numberthis \label{eq:gap_h_truncation}
		\]
		We observe that $\pi$ is supported within $[-M,M]$ and $|Z_i - h_{\pi, \tau_n}(Z_i,
		\sigma_i)|
		\lesssim
		\sqrt{\log(1/\tau_n)} \lesssim \sqrt{\log n}$ by Lemma 2 in \citet{Jiang2020} (see
		Lemma SM6.8 in \citet{chen2024empirical}). Thus \[
		(h_\pi(Z_i, \sigma_i) - h_{\pi, \tau_n}(Z_i, \sigma_i))^2 \lesssim 1 + \log n +
		Z_i^2.
		\]
		Plugging this into \eqref{eq:gap_h_truncation}, our choice of $\Omega_n(B)$ is sufficiently low-probability
		such
		that $ \norm{h_\pi - h_ {\pi,\tau_n}}_ {2,n}^2 \lesssim n^ {-2}$ by Cauchy--Schwarz
		inequality and bounding subgaussian moments.
	\end{proof}
	
	We now move on to the proof of \cref{thm:denoising-strong}. Let us first state a useful technical lemma which controls the expected squared Hellinger distance between two posteriors with the same prior. 
	
	\begin{lemma}\label{lem:hellinger-posteriors}
		Let $\pi \in \cP([-M,M])$ and let $f_\theta$, $g_\theta$ be two families of conditional densities on~$\R$ indexed by $\theta \in [-M,M]$, with marginals $m(x) := \int f_\theta(x)\,d\pi(\theta)$ and $m_G(x) := \int g_\theta(x)\,d\pi(\theta)$. Define the corresponding posteriors
		\[
		p_f(\theta \mid x) := \frac{f_\theta(x)\,\pi(\theta)}{m(x)}, \qquad p_g(\theta \mid x) := \frac{g_\theta(x)\,\pi(\theta)}{m_G(x)}.
		\]
		Then
		\[
		\int \Helsq\!\left(p_f(\cdot \mid x),\, p_g(\cdot \mid x)\right) m(x)\,dx \leq 2\int \Helsq(f_\theta, g_\theta)\,d\pi(\theta).
		\]
	\end{lemma}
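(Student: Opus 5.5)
The plan is to compare both posteriors through the joint laws of $(\theta,X)$ and then peel off the marginal discrepancy. Let $P_f(d\theta,dx) := \pi(d\theta) f_\theta(x)\,dx$ and $P_g(d\theta,dx) := \pi(d\theta) g_\theta(x)\,dx$. Since the two joints share the $\theta$-marginal $\pi$, we have
\[
\Helsq(P_f,P_g) = \frac12\int\!\!\int \bigl(\sqrt{f_\theta(x)}-\sqrt{g_\theta(x)}\bigr)^2 dx\,d\pi(\theta) = \int \Helsq(f_\theta,g_\theta)\,d\pi(\theta).
\]
So it suffices to show that the $m$-averaged posterior Hellinger distance is at most $2\Helsq(P_f,P_g)$.

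Next, factor the joints through $X$: $P_f = m(x)\,p_f(\theta\mid x)$ and $P_g = m_G(x)\,p_g(\theta\mid x)$. Write $a := \sqrt{m(x)\,p_f(\theta\mid x)}$ and $b := \sqrt{m_G(x)\,p_g(\theta\mid x)}$, and introduce the intermediate quantity $c := \sqrt{m(x)\,p_g(\theta\mid x)}$. By the definition of the posterior Hellinger distance,
\[
\frac12\int\!\!\int (a-c)^2 = \int \Helsq\bigl(p_f(\cdot\mid x),p_g(\cdot\mid x)\bigr)\,m(x)\,dx .
\]
Applying the triangle inequality in $L^2$ and then $(u+v)^2\le 2u^2+2v^2$ gives
\[
\frac12\|a-c\|^2 \le \|a-b\|^2 + \|b-c\|^2 .
\]
The first term satisfies $\|a-b\|^2 = 2\Helsq(P_f,P_g)$. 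The second term equals $\int (\sqrt{m_G}-\sqrt{m})^2\,dx = 2\Helsq(m,m_G)$, because $\int p_g(\theta\mid x)\,d\theta = 1$.

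By data processing (marginalizing out $\theta$), $\Helsq(m,m_G)\le \Helsq(P_f,P_g)$. Combining the pieces gives a bound of $4\Helsq(P_f,P_g)$, which is a factor of $2$ too weak. This constant is the main obstacle.

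To get the sharp constant $2$, avoid the crude $(u+v)^2$ bound and expand directly instead. Write $\int\!\!\int ac = \int m \cdot \mathrm{BC}(x)$, where $\mathrm{BC}(x) := \int\sqrt{p_f p_g}$ is the Bhattacharyya coefficient of the two posteriors at $x$. Similarly, $\int\!\!\int ab = \int \sqrt{m\,m_G}\,\mathrm{BC}(x)$. The posterior term is then $\int m(1-\mathrm{BC})$, while $\Helsq(P_f,P_g) = 1-\int\sqrt{m m_G}\,\mathrm{BC}$. The inequality to prove becomes
\[
\int m(1-\mathrm{BC}) \le 2 - 2\int \sqrt{m m_G}\,\mathrm{BC}.
\]
Using $0\le \mathrm{BC}\le 1$, the AM--GM bound $2\sqrt{m m_G}\le m+m_G$ is not enough directly. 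Instead, rearrange the target as
\[
\int \mathrm{BC}\,\bigl(2\sqrt{m m_G}-m\bigr) \le 1 = \int m_G .
\]
This follows pointwise from $2\sqrt{m m_G}-m \le m_G$ (which is equivalent to $(\sqrt{m}-\sqrt{m_G})^2\ge 0$), together with $\mathrm{BC}\le 1$ and $m_G\ge 0$.

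If $\int \Helsq(f_\theta,g_\theta)\,d\pi(\theta)$ is infinite or the densities vanish, there is nothing to prove. Null sets of $m$ do not contribute to the left-hand side, so those cases are trivially handled.
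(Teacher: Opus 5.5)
Your proof is correct, and it gets more than the paper's own proof does. Your first, ``crude'' route is exactly the paper's argument. The paper writes $UQ-VP=Q(U-V)+V(Q-P)$, which after multiplying by $\sqrt{m}$ is your decomposition $a-c=(a-b)+(b-c)$. It then bounds the two pieces by $2\int\Helsq(f_\theta,g_\theta)\,d\pi(\theta)$ and $2\Helsq(m,m_G)\le 2\int\Helsq(f_\theta,g_\theta)\,d\pi(\theta)$.

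As you noticed, adding these gives the constant $4$, not $2$. So the paper's proof as written only establishes the lemma with constant $4$. This is harmless for Theorem~\ref{thm:denoising-strong}, which is stated up to $\lesssim$, though the explicit ``$\le 4\mathcal{H}_n^2$'' there would become $8\mathcal{H}_n^2$.

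Your Bhattacharyya-coefficient computation loses nothing, whereas the paper's $(u+v)^2\le 2u^2+2v^2$ step does. It amounts to the exact identity
\[
2\Helsq(P_f,P_g)-\int m\,(1-\mathrm{BC})\,dx=\int\Bigl[(1-\mathrm{BC})\,m_G+\mathrm{BC}\,\bigl(\sqrt{m}-\sqrt{m_G}\bigr)^2\Bigr]\,dx\;\ge\;0,
\]
which proves the statement with the claimed constant $2$.

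Two small clean-ups:
\begin{itemize}
\item $\int\Helsq(f_\theta,g_\theta)\,d\pi\le 1$, so it is never infinite.
\item The case $m_G(x)=0<m(x)$ is not ``nothing to prove.'' There $p_g(\cdot\mid x)$ is $0/0$, and the left-hand side needs a convention. Your argument still goes through verbatim for any choice of probability measure $p_g(\cdot\mid x)$ on that set. It only uses $0\le\mathrm{BC}\le1$, $\iint c^2=\int m=1$, and the fact that $b=\sqrt{g_\theta(x)\pi(\theta)}=0$ $\pi$-a.e.\ at such $x$.
\end{itemize}
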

	
	\begin{proof}[Proof of \cref{thm:denoising-strong}]
		We insert the intermediate quantity $D := n^{-1}\sum_{i=1}^n \E|\theta_i - h_{\pistar}(X_i; \sigma_i)|^2$ and decompose:
		\begin{equation}
			\left|\sqrt{\rmis(\pst)} - \sqrt{\avgn \E|\theta_i - \delta_i^*(X_i)|^2}\right|\leq \underbrace{\left|\sqrt{\rmis(\pistar)} - \sqrt{D}\right|}_{\text{(I)}} + \underbrace{\left|\sqrt{D} - \sqrt{\avgn \E|\theta_i - \delta_i^*(X_i)|^2}\right|}_{\text{(II)}}. \label{eq:piece-II-split}
		\end{equation}
		
		\medskip\noindent\emph{Bound for Term (I)}. By the same Lipschitz and coupling argument as in \eqref{eq:lipcoup}, we get:
		\begin{equation}\label{eq:IIa-bound}
			\text{(I)} \leq \frac{M^2}{k^2}\,\mathcal{W}_n.
		\end{equation}
		
		\medskip\noindent\emph{Bound for Term (II)}. Since $\delta_i^*(x) = \E[\theta_i \mid X_i = x]$ minimizes the MSE, the Pythagorean identity gives
		\[
		D - \avgn \E|\theta_i - \delta_i^*(X_i)|^2 = \avgn \E\bigl|h_{\pistar}(X_i; \sigma_i) - \delta_i^*(X_i)\bigr|^2 \geq 0.
		\]
		Using $\sqrt{a} - \sqrt{b} \leq \sqrt{a - b}$ for $a \geq b \geq 0$, we get
		\[
		\text{(II)} \leq \sqrt{\avgn \E\bigl|h_{\pi_\star}(X_i; \sigma_i) - \delta_i^*(X_i)\bigr|^2}.
		\]
		Since $\theta_i \in [-M, M]$ under both posteriors, both $h_{\pistar}(x; \sigma_i)$ and $\delta_i^*(x)$ lie in $[-M, M]$, so
		\[
		\bigl|h_{\pistar}(x; \sigma_i) - \delta_i^*(x)\bigr|\leq M\,\TV\!\left(p_{G,i}(\cdot \mid x),\, p_{\mathrm{true},i}(\cdot \mid x)\right),
		\]
		where $p_{G,i}(\theta \mid x) := \phi\!\left(x - \theta;{\sigma_i}\right)\pistar(\theta)/\fmix{\pistar}{\sigma_i}(x)$ is the ``normal'' posterior and $p_{\mathrm{true},i}(\theta \mid x) := \nu_{\theta,i}(x)\,\pistar(\theta)/m_i(x)$ is the true posterior, with $m_i(x) := \int \nu_{\theta,i}(x)\,d\pistar(\theta)$. Here $\nu_{\theta,i}$ denotes the conditional distribution of $X_i$ given $\theta$ as in \cref{sec:denoising}. By $\TV^2 \leq 2\,\Helsq$ and \cref{lem:hellinger-posteriors}, we obtain
		\begin{align*}
			&\;\;\;\;\avgn \E_{X_i \sim m_i}\!\left[\TV^2\!\left(p_{G,i}(\cdot \mid X_i),\, p_{\mathrm{true},i}(\cdot \mid X_i)\right)\right] \\ &\leq 2\avgn \E_{X_i\sim m_i}\!\left[\Helsq\!\left(p_{G,i}(\cdot|X_i),\, p_{\mathrm{true},i}(\cdot|X_i)\right)\right] \leq 4\,\mathcal{H}_n^2.
		\end{align*}
		Therefore,
		\begin{align}\label{eq:IIb-bound}
			\text{(II)} \leq M\sqrt{\avgn \E_{X_i \sim m_i}\!\left[\TV^2\!\left(p_{G,i}(\cdot \mid X_i),\, p_{\mathrm{true},i}(\cdot \mid X_i)\right)\right]} \leq 2M\sqrt{2}\,\mathcal{H}_n.
		\end{align}	
		Combining \eqref{eq:piece-I-split}, \eqref{eq:Ia-bound}, \eqref{eq:Ib-bound},  \eqref{eq:piece-II-split}, \eqref{eq:IIa-bound} and~\eqref{eq:IIb-bound}:
		\[
		\text{(I)} + \text{(II)} \lesssim \rho_n \log^{3/2} n + \mathcal{W}_n + \mathcal{H}_n. \qedhere
		\]
	\end{proof}
	
	Finally we present the proof of \cref{prop:hellinger-bound}. This requires a few technical lemmas which we present first. The first couple of lemmas explore some analytic properties of the function $g_T(\cdot)$ defined in \eqref{eq:gT}. 
	
	\begin{lemma}[Subadditivity of $g_T$]\label{lem:gT-subadditivity}
		Let $T > 2e$ and recall the definition of $g_T(\cdot)$ from \eqref{eq:gT}. Then for all $0 < x, y < 1$,
		\[
		g_T(x + y) \leq g_T(x) + g_T(y).
		\]
	\end{lemma}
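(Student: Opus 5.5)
The plan is to deduce subadditivity from concavity of $g_T$ on $[0,2]$ together with $g_T(0)=0$. This is the standard fact that a concave function vanishing at the origin is subadditive. Since $0<x,y<1$, we have $x+y\in(0,2)$, so it suffices to work on the interval $[0,2]$.

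\emph{Step 1: continuity at $0$.} First, $g_T$ is continuous at $0$, because $y\log^2(T/y)\to 0$ as $y\downarrow 0$. Hence $g_T$ is continuous on $[0,2]$ with $g_T(0)=0$.

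\emph{Step 2: concavity on $(0,2]$.} Differentiate for $y>0$:
\[
g_T'(y)=\log^2(T/y)-2\log(T/y), \qquad g_T''(y)=\frac{2}{y}\bigl(1-\log(T/y)\bigr).
\]
For $0<y\le 2$ we have $\log(T/y)\ge \log(T/2)>\log e=1$, using $T>2e$. So $g_T''<0$ on $(0,2]$. This step is exactly where the hypothesis $T>2e$ enters. Combined with continuity at $0$, it shows that $g_T$ is concave on $[0,2]$.

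\emph{Step 3: the concavity argument.} Set $s=x+y\in(0,2)$. Writing $x=\tfrac{x}{s}\,s+\tfrac{y}{s}\cdot 0$, concavity gives
\[
g_T(x)\ge \tfrac{x}{s}g_T(s)+\tfrac{y}{s}g_T(0)=\tfrac{x}{s}g_T(s).
\]
In the same way, $g_T(y)\ge \tfrac{y}{s}g_T(s)$. Adding the two inequalities yields $g_T(x)+g_T(y)\ge g_T(x+y)$.

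The only point requiring care is that concavity must hold up to $x+y<2$, not merely on $(0,1)$. That is why the threshold is $T>2e$ rather than $T>e$. Once this is checked via the sign of $g_T''$, the remaining steps are routine.
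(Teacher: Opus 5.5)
Your proposal is correct and follows essentially the same argument as the paper: compute $g_T''(z)=2\bigl(1-\log(T/z)\bigr)/z$, use $T>2e$ to get concavity on $(0,2)$, extend continuously with $g_T(0)=0$, and add the two concavity inequalities $g_T(x)\ge \frac{x}{x+y}g_T(x+y)$ and $g_T(y)\ge \frac{y}{x+y}g_T(x+y)$. You also correctly identify that concavity has to hold up to $x+y<2$, and this is exactly why the paper's proof needs the hypothesis $T>2e$.
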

	
	\begin{lemma}[Mixed entropy-type inequality]\label{lem:mixed-entropy-ineq}
		For all $T > e^2$ and all $0 < a, b < 1$,
		\[
		\sqrt{2}\,a\sqrt{g_T(a^2)+g_T(b^2)} - b^2 \leq  2 g_{T}(a^2) - \frac{1}{2}\,b^2.
		\]
	\end{lemma}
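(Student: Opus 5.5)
The plan is to split the square root, bound each resulting piece by $g_T(a^2)$ (at the cost of at most $\tfrac12 b^2$), and handle the cross term by a case split on whether $a\le b$ or $b<a$.

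Write $L_a:=\log(T/a^2)$ and $L_b:=\log(T/b^2)$. Since $0<a,b<1$ and $T>e^2$, both satisfy $L_a,L_b>\log T>2$. We have $g_T(a^2)=a^2L_a^2$ and $\sqrt{g_T(b^2)}=bL_b$. Using $\sqrt{u+v}\le\sqrt u+\sqrt v$, the left-hand side is at most
\[
\sqrt2\,a^2L_a+\sqrt2\,a\,bL_b-b^2 .
\]

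The first term is handled directly. Because $L_a>2>\sqrt2$, we get $\sqrt2\,a^2L_a\le a^2L_a^2=g_T(a^2)$. It therefore remains to show
\[
\sqrt2\,a\,bL_b-b^2\le g_T(a^2)-\tfrac12 b^2 .
\]

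For the cross term we split into two cases.

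\emph{Case $b\ge a$.} Then $L_b\le L_a$. By AM--GM,
\[
\sqrt2\,abL_b\le a^2L_b^2+\tfrac12 b^2\le a^2L_a^2+\tfrac12 b^2=g_T(a^2)+\tfrac12 b^2 .
\]
Subtracting $b^2$ gives exactly the required bound.

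\emph{Case $b<a$.} Here we use that $y\mapsto y\log(T/y^2)$ is increasing on $(0,1)$. Its derivative is $\log(T/y^2)-2$, which is positive because $T/y^2>T>e^2$. This is the only place where the hypothesis $T>e^2$ is genuinely used beyond $L>\sqrt2$. Monotonicity gives $bL_b\le aL_a$, so
\[
\sqrt2\,abL_b\le\sqrt2\,a^2L_a\le g_T(a^2),
\]
where the last step is the same estimate as for the first term. Finally, $-b^2\le-\tfrac12 b^2$ trivially.

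Combining the two pieces yields $\sqrt{2}\,a\sqrt{g_T(a^2)+g_T(b^2)}-b^2\le 2g_T(a^2)-\tfrac12 b^2$. The only mildly delicate point is choosing the case split so that AM--GM is used only when $L_b\le L_a$, and monotonicity of $y\log(T/y^2)$ covers the other case.
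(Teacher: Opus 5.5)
Your proof is correct and follows essentially the paper's route: a case split on whether $b \ge a$ (the paper phrases it as $g_T(b^2)$ versus $g_T(a^2)$, which is equivalent by monotonicity of $g_T$ on $(0,1)$), Young's inequality together with $\log(T/b^2)\le\log(T/a^2)$ when $b\ge a$, and domination by the $a$-term via monotonicity otherwise. The only cosmetic difference is how the square root is handled. You first use $\sqrt{u+v}\le\sqrt u+\sqrt v$ and treat the diagonal term $\sqrt2\,a^2\log(T/a^2)$ separately, whereas the paper bounds $g_T(a^2)+g_T(b^2)$ by twice its larger summand.
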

	
	\noindent For our next set of results, we introduce a reweighted Kullback-Leibler divergence given by
	\[
	\KL(q;\, p_1 | p_2) = \int q \log \frac{p_1}{p_2}.
	\]
	Clearly if $q = p_1$, then $\KL(q;\, p_1 | p_2) = \KL(p_1 \| p_2)$ and if $q = p_2$, then $\KL(q;\, p_1 | p_2) = -\KL(p_2 \| p_1)$. Note that the quantity $\KL(q;\, p_1 | p_2)$ arises naturally in the context of \cref{asn:assumeindep} for bounding the left hand side of~\eqref{eq:condition-a}. In particular, the left hand side of \eqref{eq:condition-a} equals $\KL(q;\, p_1 | p_2)$ with $q=\mu_i$, $p_1=f_{\pi,\sigma_i}$, and $p_2=f_{\pst,\sigma_i}$. 
	
	\noindent In a similar vein, we introduce a reweighted $k$-th order Kullback-Leibler variation given by
	\[
	V_k(q;\, p_1, p_2) := \int q \left| \log \frac{p_1}{p_2} \right|^k.
	\]
	The quantity $V_k(q;\, p_1, p_2)$ arises naturally in the context of \cref{asn:assumeindep} for bounding the left hand side of~\eqref{eq:condition-b}. In particular, the left hand side of \eqref{eq:condition-b} equals $V_k(q;\, p_1 | p_2)$ with $k=2$, $q=\mu_i$, $p_1=f_{\pi,\sigma_i}$, and $p_2=f_{\pst,\sigma_i}$.
	
	If $q = p_1$ or $p_2$, then clearly the reweighted $k$-th order Kullback-Leibler variation is equal (up to permuting $p_1$ and $p_2$) to the usual $k$-th order Kullback-Leibler variation given by 
	\[
	V_k(p_1 \| p_2) = \int p_1 \left| \log \frac{p_1}{p_2} \right|^k.
	\]
	Bounds for the usual Kullback-Leibler variation have been studied extensively (see \cite{wong1995probability,kaji2026hellinger}). Our subsequent bounds can be viewed as extensions of these existing results to the reweighted setting. 
	
	Our first result studies a bound on $V_k(p_1 \| p_2)$ in terms of the squared Hellinger distance between $p_1$ and $p_2$. Related bounds appear in \citet[Theorem~2 and Proposition~4]{kaji2026hellinger} and \citet[Theorem~5]{wong1995probability}; the proof techniques are similar. We present a version tailored to the setting of our paper.
	
	\begin{lemma}\label{lem:Vk-hellinger-bound}
		Suppose there exists some $\delta \in (0,1)$ such that $$M_\delta := \int p_1 \left(\frac{p_1}{p_2}\right)^\delta < \infty.$$
		Then we have
		\[
		V_k(p_1 \| p_2) \leq 10\,\delta^{-k}\,\Helsq(p_1, p_2) \left[ C_{k,\delta} + \left|\log\!\left(\frac{M_\delta}{5\,\Helsq(p_1, p_2)}\right)\right|^k \right]
		\]
		for some constant $C_{k,\delta} > 0$, provided $k \geq 2$.
	\end{lemma}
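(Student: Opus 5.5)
We follow the classical route of \citet{wong1995probability} and \citet{kaji2026hellinger}: split the integral defining $V_k(p_1\|p_2)$ according to the size of the likelihood ratio $R := p_1/p_2$. Write $h^2 := \Helsq(p_1,p_2)$. Fix a threshold $\lambda \ge e^{k}$, to be chosen later of order $(M_\delta/h^2)^{1/\delta}$. Then decompose
\[
V_k(p_1\|p_2) = \int_{\{R \le \lambda\}} p_1 |\log R|^k + \int_{\{R>\lambda\}} p_1 |\log R|^k =: \mathrm{(A)} + \mathrm{(B)}.
\]

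\emph{Bounding (A).} The key pointwise fact is that on $\{R\le \lambda\}$,
\[
R\,|\log R|^k \;\le\; C_k\,\bigl(1+\log^k\lambda\bigr)\,(\sqrt{R}-1)^2 .
\]
To see this, note that $|\log R| = 2|\log\sqrt R|$. Near $R=1$, $|\log \sqrt R| \asymp |\sqrt R-1|$; since $k\ge 2$, the power $|\log R|^k$ is dominated by $(\sqrt R-1)^2$ times a bounded factor. For $R$ near $0$, $R|\log R|^k\to 0$ while $(\sqrt R -1)^2\to 1$. For $R\in[e,\lambda]$, we have $R \lesssim (\sqrt R-1)^2$ and $|\log R|^k\le \log^k \lambda$.

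Integrating against $p_2$ gives $\mathrm{(A)} \le 2C_k h^2 (1+\log^k \lambda)$. The case $k\ge2$ is exactly what makes the region near $R=1$ harmless. This is the step where I would be careful with constants, but it is routine calculus: analyze the function $u\mapsto u^2|2\log u|^k/(u-1)^2$ on $(0,\sqrt\lambda]$.

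\emph{Bounding (B).} This is the main obstacle, and it is where $M_\delta$ enters. On $\{R>\lambda\}$ with $\lambda\ge e^{k/\delta}$, the map $r\mapsto (\log r)^k r^{-\delta}$ is decreasing. Hence $|\log R|^k \le R^{\delta}\,(\log\lambda)^k \lambda^{-\delta}$, and so
\[
\mathrm{(B)} \le (\log\lambda)^k \lambda^{-\delta} \int p_1 R^{\delta} = M_\delta (\log \lambda)^k \lambda^{-\delta}.
\]
Now choose $\lambda$ with $\lambda^{\delta} = M_\delta/(5h^2)$, provided this is at least $e^{k}$. Then $\mathrm{(B)} \le 5h^2 \delta^{-k}\log^k\!\bigl(M_\delta/(5h^2)\bigr)$, and $\log^k \lambda = \delta^{-k}\log^k(M_\delta/(5h^2))$ in (A) as well. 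Summing the two pieces gives
\[
V_k(p_1\|p_2) \le 10\,\delta^{-k} h^2\bigl[C_{k,\delta}+|\log(M_\delta/(5h^2))|^k\bigr],
\]
after absorbing constants into $C_{k,\delta}$.

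\emph{Remaining case.} If $M_\delta/(5h^2) < e^{k}$, take $\lambda = e^{k/\delta}$ instead. Then (B) is at most $M_\delta (k/\delta)^k e^{-k} \lesssim_{k} \delta^{-k} h^2$, and (A) is at most $h^2$ times a constant depending on $k,\delta$. Both are absorbed into the $C_{k,\delta}$ term. The absolute values in the statement cover this regime, where the logarithm may be small or negative. Finally, $M_\delta\ge 1$ by Jensen's inequality, which keeps the argument of the logarithm bounded below by $1/5$ since $h^2 \le 1$.
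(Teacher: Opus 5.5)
Your proposal is correct and is essentially the paper's argument: the tail $\{R>\lambda\}$ is treated identically (monotonicity of $(\log r)^k r^{-\delta}$ past $e^{k/\delta}$, then $\lambda^\delta = M_\delta/(5h^2)$), while on $\{R\le\lambda\}$ the paper only splits once more at $R=4$, using $|\log x|^k\le C_k(\sqrt{x}-1)^2$ with $x=p_2/p_1\ge 1/4$ below $4$ and \cref{lem:hellinger-integral-bound} (the integrated form of your observation $R\lesssim(\sqrt{R}-1)^2$ for large $R$) between $4$ and $\lambda$, which your single pointwise inequality packages together. The only loose point is that an additive $C_{k,\delta}$ cannot absorb the numerical factor ($2C_k+5$ in your bound) multiplying the logarithmic term, so getting the literal constant $10$ means tracking that factor; the paper's own arithmetic for this constant is equally loose, and it is immaterial downstream, where the lemma is only used up to constants.
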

	
	In our next result, we provide a bound on the reweighted Kullback-Leibler variation, which is new to the best of our knowledge. We show that $V_k(q;p_1,p_2)$ can be bounded in terms of $\Helsq(p_1,p_2)$ and $\Helsq(q,p_2)$.  
	
	\begin{lemma}\label{lem:reweighted-Vk-bound}
		Suppose there exists $\delta \in (0,1)$ such that
		\[
		T_\delta := \int q \left[ \left(\frac{p_1}{p_2}\right)^\delta + \left(\frac{p_2}{p_1}\right)^\delta \right] < \infty.
		\]
		Set $A := \Helsq(p_1, p_2) + \Helsq(q, p_2)$. Then we have:
		\[
		V_k(q;\, p_1, p_2) \leq 96\,\delta^{-k} \left(C_{k,\delta} \vee \left|\log\!\left(\frac{T_\delta}{A}\right)\right|^k\right) A
		\]
		for some constant $C_{k,\delta} > 0$, provided $k \geq 2$.
	\end{lemma}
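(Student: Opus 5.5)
We plan to split $\R$ according to the size of the log-likelihood ratio, bounding the region where $\log(p_1/p_2)$ is small by Hellinger distances and the tails by the fractional moment $T_\delta$. Write $L := \log(p_1/p_2)$, so that $V_k(q;p_1,p_2) = \int q|L|^k$. Fix a threshold $u>0$, to be chosen of order $\delta^{-1}\log(T_\delta/A)$ (or a constant when this logarithm is small), and split the integral into $\{|L|\le u\}$ and $\{|L|>u\}$.

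\emph{Tail region.} On $\{|L|>u\}$ we use $|L|^k e^{-\delta|L|}\le \sup_{s>u} s^k e^{-\delta s}$. For $u\ge k/\delta$ this supremum equals $u^k e^{-\delta u}$. Since $e^{\delta|L|}\le (p_1/p_2)^\delta+(p_2/p_1)^\delta$, we get
\[
\int_{\{|L|>u\}} q|L|^k \le u^k e^{-\delta u}\, T_\delta .
\]
Choosing $e^{-\delta u}=A/T_\delta$, i.e.\ $u=\delta^{-1}\log(T_\delta/A)$, makes this at most $\delta^{-k}|\log(T_\delta/A)|^k A$. If $\log(T_\delta/A)<k$, we instead take $u=k/\delta$. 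This produces the constant $C_{k,\delta}$, using $T_\delta e^{-k}\lesssim A$ in that case.

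\emph{Bulk region.} On $\{|L|\le u\}$, write $q=p_2+(q-p_2)$.

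For the $p_2$ part, we use the elementary inequality $|\log x|^2\le C(u)\,(\sqrt{x}-1)^2$ for $|\log x|\le u$. The constant satisfies $C(u)\lesssim u^2$ for $u\ge 1$, since $(\sqrt x-1)^2\ge (1-e^{-u/2})^2\cdots$; more precisely, $|\log x|/|\sqrt x-1|$ is bounded by $2\max(1,x^{-1/2})$, which is problematic when $x$ is small. So we instead bound $|L|^k\le u^{k-2}L^2$. For $x\ge e^{-u}$ we then use $L^2\le 4e^{u}(\sqrt x-1)^2$? That factor $e^u$ is too large. Better: handle small ratios $p_1/p_2<1/4$ separately. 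On that set, $(\sqrt{p_1/p_2}-1)^2\ge 1/4$, so $\int p_2\mathbf 1\{p_1<p_2/4\}\le 8\Helsq(p_1,p_2)$, and $|L|^k\le u^k$ there. On $p_1/p_2\ge1/4$ we have $L^2\le 16(\sqrt{p_1/p_2}-1)^2$. Together these give $\int p_2|L|^k\mathbf 1\{|L|\le u\}\lesssim u^k\Helsq(p_1,p_2)$.

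For the $(q-p_2)$ part, we bound $\int |q-p_2|\,|L|^k\mathbf 1\{|L|\le u\}\le u^k\int|q-p_2|$. An $L^1$ bound alone gives $\TV\le \sqrt2\Hel$, which is not quadratic. To fix this, write $|q-p_2|=|\sqrt q-\sqrt{p_2}|(\sqrt q+\sqrt{p_2})$ and apply Cauchy--Schwarz with weight $|L|^k$. This gives
\[
\int |q-p_2|\,|L|^k\mathbf 1\{|L|\le u\} \le \Hel(q,p_2)\Big(\int (\sqrt q+\sqrt{p_2})^2|L|^{2k}\mathbf 1\{|L|\le u\}\Big)^{1/2},
\]
and the last integral is at most $2u^k\big(\int (q+p_2)|L|^k\big)^{1/2}$. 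This is a self-bounding inequality: with $V:=\int q|L|^k\mathbf 1\{|L|\le u\}$ and $P:=\int p_2|L|^k\mathbf 1\{|L|\le u\}\lesssim u^k\Helsq(p_1,p_2)$, we have
\[
V\le P+ 2u^{k/2}\Hel(q,p_2)\sqrt{2(V+P)} .
\]
Solving the quadratic yields $V\lesssim P+u^k\Helsq(q,p_2)\lesssim u^kA$.

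Combining the bulk and tail estimates with $u\asymp\delta^{-1}(\log(T_\delta/A)\vee k)$ gives $V_k\lesssim \delta^{-k}(C_{k,\delta}\vee|\log(T_\delta/A)|^k)A$. The explicit constant $96$ then comes from tracking the factors above. The main obstacle is the cross term $(q-p_2)$: obtaining quadratic rather than linear dependence on $\Hel(q,p_2)$ requires the self-bounding Cauchy--Schwarz step. If that step proved lossy, we would fall back on splitting by $q/p_2\gtrless 4$, mirroring the treatment of small ratios above.
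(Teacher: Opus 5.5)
Your proof is correct, but it handles the key difficulty differently from the paper. The paper splits by the ratio $p_1/p_2$ into four regions with thresholds $1/M$, $1/4$, $M$. The two tails are handled by $T_\delta$ exactly as you do. On each bulk piece the paper splits further according to whether $q\le 4p_2$ (respectively $q\le 4p_1$):
\begin{itemize}
\item where $q\le 4p_2$, it replaces $q$ by $4p_2$ and uses $|\log x|^k\le C_k(\sqrt x-1)^2$ for $x\ge 1/4$;
\item where $q>4p_2$, it bounds $|L|^k$ by $\log^k M$ and uses $\int_{q>4p_2}q\le 8\Helsq(q,p_2)$ (Lemma~\ref{lem:hellinger-integral-bound});
\item the piece with $p_1$ as reference density needs the Hellinger triangle inequality $\Helsq(q,p_1)\le 2\Helsq(q,p_2)+2\Helsq(p_1,p_2)$ to get back to $A$.
\end{itemize}

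You instead write $q=p_2+(q-p_2)$ and control the cross term by Cauchy--Schwarz against $\sqrt q-\sqrt{p_2}$, followed by the self-bounding inequality $V\le P+2u^{k/2}\Hel(q,p_2)\sqrt{V+P}$. This gives $V\le 3P+4u^k\Helsq(q,p_2)$. The step is legitimate only because $V$ and $P$ carry the indicator $\mathbf{1}\{|L|\le u\}$, so $V\le u^k<\infty$ before you solve the quadratic. In the write-up, fix three things:
\begin{itemize}
\item keep that indicator in the sentence preceding your display, where you dropped it;
\item the first Cauchy--Schwarz factor is $\sqrt2\,\Hel(q,p_2)$, not $\Hel(q,p_2)$;
\item the second factor is bounded by $(2u^k(V+P))^{1/2}$, not ``$2u^k(\cdot)^{1/2}$'';
\item remove the false starts in the bulk paragraph.
\end{itemize}

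What your route buys:
\begin{itemize}
\item a single threshold on $|L|$ that treats both tails symmetrically;
\item no triangle-inequality detour;
\item a mirror of the Cauchy--Schwarz step already used in Lemma~\ref{lem:reweighted-KL-hellinger};
\item fully explicit constants. With $u=\delta^{-1}\max\{k,\log(T_\delta/A)\}\ge 2$, the tail is at most $u^kA$ and the bulk at most roughly $48u^kA$. So the stated bound with the factor $96$ holds with $C_{k,\delta}=k^k$.
\end{itemize}
The paper's route is purely case-by-case and pointwise. It needs no quadratic inequality, and it reuses the same ``mass where $q>4p$'' lemma that drives Lemma~\ref{lem:Vk-hellinger-bound}. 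Your stated fallback of splitting by $q/p_2\gtrless 4$ is exactly the paper's argument.
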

	
	Our final technical lemma provides a bound on the reweighted Kullback-Leibler divergence, also in terms of $\Helsq(p_1,p_2)$ and $\Helsq(q,p_2)$.
	
	\begin{lemma}\label{lem:reweighted-KL-hellinger}
		Suppose there exists some $\delta \in (0,1)$ such that $M_\delta$ and $T_\delta$ from \cref{lem:Vk-hellinger-bound,lem:reweighted-Vk-bound} are finite. Then we have
		\[
		\KL(q;\, p_1 | p_2) \leq \sqrt{2}\,\Hel(q, p_2)\,\sqrt{V_2(q;\, p_1, p_2) + V_2(p_2 \| p_1)} - 2\Helsq(p_1, p_2).
		\]
		Note that $V_2(q;\, p_1, p_2)$ and $V_2(p_2 \| p_1)$ can be further bounded in terms of $\Helsq(q, p_2)$ and $\Helsq(p_1, p_2)$ from \cref{lem:Vk-hellinger-bound,lem:reweighted-Vk-bound}.
	\end{lemma}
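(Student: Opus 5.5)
The plan is to start from the decomposition
\[
\KL(q;\,p_1|p_2) = \int q\log\frac{p_1}{p_2} = \int (q-p_2)\log\frac{p_1}{p_2} + \int p_2\log\frac{p_1}{p_2} = \int (q-p_2)\log\frac{p_1}{p_2} - \KL(p_2\|p_1).
\]
The second term is handled by the classical inequality $\KL(p_2\|p_1)\ge 2\Helsq(p_1,p_2)$. This follows from $\log u \le 2(\sqrt{u}-1)$ with $u=p_1/p_2$, which gives $\int p_2\log(p_1/p_2)\le 2\int(\sqrt{p_1p_2}-p_2) = -2\Helsq(p_1,p_2)$ under the paper's normalization $\Helsq=\tfrac12\int(\sqrt p-\sqrt q)^2$. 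So it remains to show
\[
\int (q-p_2)\log\frac{p_1}{p_2} \le \sqrt2\,\Hel(q,p_2)\sqrt{V_2(q;p_1,p_2)+V_2(p_2\|p_1)}.
\]

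For the cross term I would factor $q-p_2=(\sqrt q-\sqrt{p_2})(\sqrt q+\sqrt{p_2})$ and apply Cauchy--Schwarz. Writing $L:=\log(p_1/p_2)$,
\[
\Bigl|\int(\sqrt q-\sqrt{p_2})(\sqrt q+\sqrt{p_2})L\Bigr| \le \Bigl(\int(\sqrt q-\sqrt{p_2})^2\Bigr)^{1/2}\Bigl(\int(\sqrt q+\sqrt{p_2})^2L^2\Bigr)^{1/2}.
\]
The first factor equals $\sqrt2\,\Hel(q,p_2)$. For the second, use $(\sqrt q+\sqrt{p_2})^2\le 2(q+p_2)$, which gives $\int(\sqrt q+\sqrt{p_2})^2L^2 \le 2\bigl(V_2(q;p_1,p_2)+\int p_2 L^2\bigr)$. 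Since $\int p_2\log^2(p_1/p_2)=V_2(p_2\|p_1)$, this route yields the bound with constant $2\Hel(q,p_2)$ rather than $\sqrt2\,\Hel(q,p_2)$.

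To recover the stated constant $\sqrt2$, I would tighten the Cauchy--Schwarz step. One option is to weight differently, for instance bounding $\int(\sqrt q-\sqrt{p_2})\sqrt q\,L$ and $\int(\sqrt q-\sqrt{p_2})\sqrt{p_2}\,L$ separately. Each is at most $\sqrt2\,\Hel\cdot\sqrt{V}$, and the sum is at most $\sqrt2\,\Hel(\sqrt{V_q}+\sqrt{V_{p_2}})$. That gives $\sqrt2\,\Hel\sqrt{2(V_q+V_{p_2})}$, still off by $\sqrt2$. If the sharp constant cannot be obtained, I would note that a factor of 2 in place of $\sqrt2$ is harmless for the downstream use, since the paper invokes \cref{lem:mixed-entropy-ineq} with absolute constants anyway. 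Reconciling this constant is the main obstacle. It is possible that the paper's Hellinger normalization absorbs the factor, so I would check carefully which normalization of $\int(\sqrt q-\sqrt{p_2})^2$ is being used.

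Finiteness of $M_\delta$ and $T_\delta$ is used only to guarantee that $V_2(q;p_1,p_2)$ and $V_2(p_2\|p_1)$ are finite via \cref{lem:Vk-hellinger-bound,lem:reweighted-Vk-bound}. This makes every integral above well defined and legitimizes splitting $\int q L$ into the two pieces.
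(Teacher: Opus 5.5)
Your argument is the paper's argument step for step. You split $\KL(q;p_1|p_2)=\int(q-p_2)\log\frac{p_1}{p_2}+\int p_2\log\frac{p_1}{p_2}$, apply Cauchy--Schwarz to $(\sqrt q-\sqrt{p_2})(\sqrt q+\sqrt{p_2})\log\frac{p_1}{p_2}$ using $(\sqrt q+\sqrt{p_2})^2\le 2(q+p_2)$, and handle the last term with $\log u\le 2(\sqrt u-1)$. The constant $2\,\Hel(q,p_2)$ you obtain is what this argument really gives. The paper's second line equals $\sqrt2\,\Hel(q,p_2)\cdot\sqrt2\,\sqrt{V_2(q;p_1,p_2)+V_2(p_2\|p_1)}$, and one factor $\sqrt2$ is lost in passing to the third line. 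The normalization offers no escape. With $\Helsq=\frac12\int(\sqrt p-\sqrt q)^2$, the first Cauchy--Schwarz factor is exactly $\sqrt2\,\Hel(q,p_2)$. Switching to the unnormalized convention would make the first term read $\sqrt2\,\Hel$ but turn the last term into $-\Helsq(p_1,p_2)$, so no convention rescues the stated combination.

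You should therefore stop trying to tighten, because the inequality with $\sqrt2$ is false. Take $p_2\equiv 1$ on $[0,1]$, $q=1.1$ on $[0,\tfrac12)$ and $0.9$ on $[\tfrac12,1]$, and $p_1=1.06$ on $[0,\tfrac12)$ and $0.94$ on $[\tfrac12,1]$. All ratios are bounded, so $M_\delta,T_\delta<\infty$. Then:
\begin{itemize}
\item $\KL(q;p_1|p_2)\approx 4.20\times 10^{-3}$;
\item $\Helsq(q,p_2)\approx 1.254\times10^{-3}$;
\item $V_2(q;p_1,p_2)+V_2(p_2\|p_1)\approx 7.20\times10^{-3}$;
\item $\Helsq(p_1,p_2)\approx 4.50\times 10^{-4}$.
\end{itemize}
So the stated right-hand side is about $4.25\times10^{-3}-0.90\times10^{-3}=3.35\times10^{-3}$, below the left-hand side. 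With $2\,\Hel(q,p_2)$ in place of $\sqrt2\,\Hel(q,p_2)$ it is about $5.11\times10^{-3}$, and the inequality holds. More systematically, take $q=(1+\epsilon h)p_2$ and $p_1\propto e^{a\epsilon h}p_2$ with $\int p_2h=0$ and $\int p_2h^2=1$. Then the left side minus the stated right side equals $\epsilon^2\bigl[(1-1/\sqrt2)a-a^2/4\bigr]+O(\epsilon^3)$, which is positive for $0<a<4(1-1/\sqrt2)$.

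Your assessment of the downstream impact is correct. In the proof of \cref{prop:hellinger-bound}, the first term already carries an unspecified constant from bounding $V_2$ by $g_T$. The two-case proof of \cref{lem:mixed-entropy-ineq} works verbatim with $Ca\sqrt{\cdot}$ in place of $\sqrt2\,a\sqrt{\cdot}$, yielding $\max(\sqrt2 C,C^2)\,g_T(a^2)-\tfrac12 b^2$. Hence the corrected statement with $2\,\Hel(q,p_2)$, which is exactly what your proof establishes, suffices everywhere the lemma is used.
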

	
	\begin{proof}[Proof of \cref{prop:hellinger-bound}]
		We verify conditions~\eqref{eq:condition-a}--\eqref{eq:condition-b} with $r_{1,i}$ and $r_{2,i}$ as defined in the Proposition. Define 
		\begin{equation*}
			M_{\delta,i} := \int f_{\pst,\sigma_i}\left(\frac{f_{\pst,\sigma_i}}{f_{\pi,\sigma_i}}\right)^\delta.
		\end{equation*}
		Since both $f_{\pi,\sigma}$ and $f_{\pistar,\sigma}$ are normal location mixtures with the same variance~$\sigma^2$, we have:
		\[
		\frac{f_{\pst,\sigma_i}(x)}{f_{\pi,\sigma_i}(x)} = \frac{\int \exp(\theta x/\sigma_i^2)\exp(-\theta^2/(2\sigma_i^2))\,d\pst(\theta)}{\int \exp(\theta x/\sigma_i^2)\exp(-\theta^2/(2\sigma_i^2))\,d\pi(\theta)}.
		\]
		As both $\pi$ and $\pistar$ are supported on $[-M, M]$, the numerator is at most $\exp(M|x|/\sigma_i^2)$ and the denominator is at least $\exp(-M|x|/\sigma_i^2 - M^2/(2\sigma_i^2))$, so
		\begin{equation}\label{eq:ratio-growth}
			\frac{f_{\pst,\sigma_i}(x)}{f_{\pi,\sigma_i}(x)} \leq \exp\!\left(\frac{2M|x|}{\sigma_i^2} + \frac{M^2}{2\sigma_i^2}\right) \leq \exp\!\left(\frac{2M|x|}{k^2} + \frac{M^2}{2k^2}\right).
		\end{equation}
		As $f_{\pistar,\sigma_i}$ has sub-Gaussian tails with uniformly upper bounded sub-Gaussian norm, this implies 
		\begin{equation}\label{eq:moment-cond-wass}
			M_{\delta}:=\sup_{n\ge 1}\max_{1\le i\le n} M_{\delta,i}<\infty
		\end{equation}
		for any $\delta \in (0, 1)$. Next define 
		$$T_{\delta,i}:=\int \left[\left(\frac{f_{\pi,\sigma_i}}{f_{\pst,\sigma_i}}\right)^{\delta}+\left(\frac{f_{\pst,\sigma_i}}{f_{\pi,\sigma_i}}\right)^{\delta}\right]\,d\mu_i.$$
		As the $\mu_i$s are uniformly sub-Gaussian by \cref{assump:subgauss}, by using \eqref{eq:ratio-growth}, we have
		\begin{equation}\label{eq:moment-cond-wass-ver}
			T_{\delta}:=\sup_{n\ge 1}\max_{1\le i\le n} T_{\delta,i}<\infty
		\end{equation}
		for any $\delta\in (0,1)$. Therefore \eqref{eq:moment-cond-wass} and \eqref{eq:moment-cond-wass-ver} verify the conditions needed to apply Lemmas \ref{lem:Vk-hellinger-bound}--\ref{lem:reweighted-KL-hellinger}. For notational convenience, let us define $q_i:=\mu_i$, $p_{1,i}:=f_{\pi,\sigma_i}$, $p_{2,i}:=f_{\pst,\sigma_i}$ and 
		\begin{align}\label{eq:kpi}
			\kpi(x) := \log\frac{f_{\pi,\sigma_i}(x)}{f_{\pistar,\sigma_i}(x)}
		\end{align}
		for the rest of the proof.
		
		\medskip
		\noindent \emph{Bound on $r_2$}. Applying \cref{lem:reweighted-Vk-bound} with $k = 2$, and setting $A_i := \Helsq(p_{1,i}, p_{2,i}) + \Helsq(q_i, p_{2,i})$, we obtain
		\[
		\E_{X\sim \mu_i}\kpi(X)^2 \lesssim A_i\log^2\!\left(\frac{T}{A_i}\right),
		\]
		for some $T>T_{\delta}\vee 2e$ large enough. Recall the definition of $g_T(\cdot)$ from \eqref{eq:gT}. By our choice $T>2e$, note that \cref{lem:gT-subadditivity} gives
		\begin{equation}\label{eq:hel-gT-split}
			g_T(A_i) = g_T\big(\Helsq(q_i,p_{2,i})+\Helsq(p_{1,i},p_{2,i}\big) \leq g_T\big(\Helsq(q_i,p_{2,i})\big) + g_T\big(\Helsq(p_{1,i},p_{2,i}\big).
		\end{equation}
		The term $g_{T}(\Helsq(p_{1,i}, p_{2,i}))$ is absorbed into the leading term of condition~\eqref{eq:condition-b}. Therefore \eqref{eq:hel-gT-split} implies that \eqref{eq:condition-b} holds with
		$r_{2,i} = g_{T}\!\left(\Helsq(q_i, p_{2,i})\right)$,
		which in turn establishes~\eqref{eq:r-hel}.
		
		\medskip
		\noindent \emph{Bound on $r_1$}. Applying \cref{lem:reweighted-KL-hellinger}, we obtain
		\begin{equation}\label{eq:hel-r1-step1}
			\E_{X\sim\mu_i} \kpi(X) \leq \sqrt{2}\,\Hel(q_i, p_{2,i})\,\sqrt{\int q_i\log^2\left(\frac{p_{1,i}}{p_{2,i}}\right) + \int p_{2,i}\log^2\left(\frac{p_{1,i}}{p_{2,i}}\right)} - \Helsq(p_{1,i}, p_{2,i}).
		\end{equation}
		Let us bound the terms inside the square root. From \cref{lem:reweighted-Vk-bound} (already applied above) and \cref{lem:Vk-hellinger-bound} with $k = 2$:
		\begin{align*}
			\int q_i\log^2\left(\frac{p_{1,i}}{p_{2,i}}\right) &\lesssim g_T(\Helsq\big(q_i,p_{2,i}\big) + g_T\big(\Helsq(p_{1,i},p_{2,i})\big), \\
			\int p_{2,i}\log^2\left(\frac{p_{1,i}}{p_{2,i}}\right) &\lesssim g_{T}\big(\Helsq(p_{1,i},p_{2,i})\big),
		\end{align*}
		where $T>0$ is some large enough constant depending on $M_\delta$ and $T_\delta$. Therefore
		\begin{align*}
			\int q_i\log^2\left(\frac{p_{1,i}}{p_{2,i}}\right) + 	\int p_{2,i}\log^2\left(\frac{p_{1,i}}{p_{2,i}}\right) \lesssim g_T\big(\Helsq(q_i,p_{2,i})\big) + g_T\big(\Helsq(p_{1,i},p_{2,i})\big) 
		\end{align*}
		for $T>0$ large enough. By \eqref{eq:hel-r1-step1}, we then have
		\begin{align*}
			\E_{X\sim\mu_i} \kpi(X) &\lesssim \Hel(q_{i},p_{2,i})\,\sqrt{g_T\big(\Helsq(q_i,p_{2,i})\big)+g_T\big(\Helsq(p_{1,i},p_{2,i}))\big)} - \Helsq(p_{1,i},p_{2,i}) \\ &\lesssim g_T\big(\Helsq(q_i,p_{2,i})\big) - \Helsq(p_{1,i},p_{2,i}),
		\end{align*}
		where the last inequality follows from  \cref{lem:mixed-entropy-ineq}. This establishes condition~\eqref{eq:condition-a} with $r_{1,i}$ as in~\eqref{eq:r-hel}.
	\end{proof}
	\section{Proof of results from \cref{sec:general}}\label{sec:pfgeneral}
	In order to prove \cref{thm:depcons}, we introduce the following normal-convolution regularized Wasserstein distance
	\begin{align}\label{eq:GOT}
		d_{\eta}(\pi_1,\pi_2):=W_1(f_{\pi_1,\eta},f_{\pi_2,\eta})
	\end{align}
	for any $\eta>0$ where $W_1$ is the $1$-Wasserstein distance from \cref{def:wasserstein}. By \citet[Theorem 1]{goldfeld2020gaussian}, $d_{\eta}$ metrizes weak convergence for all fixed $\eta>0$. The proof of \cref{thm:depcons} proceeds through a sequence of Lemmas. The first result shows that the KL divergence has curvature with respect to $d_{\eta}$ from \eqref{eq:GOT} for all $\eta>0$. 
	\begin{lemma}\label{lem:curvaturebd}
		Fix arbitrary $\delta,\eta>0$ and $1\le i\le n$ and assume \cref{asn:appopt} holds. Then there exists a constant $C>0$ (free of $\eta, \delta, i$) such that if $d_{\eta}(\pi_1,\pi_2)\ge \delta$ for some $\pi_1,\pi_2\in\mathcal{P}([-M,M])$, then
		\begin{align}\label{eq:curvaturebd}
			\inf_{1\le i\le n} \mathrm{KL}(f_{\pi_1,\sigma_i}|f_{\pi_2,\sigma_i})\ge \frac{1}{2}\exp\left(-2\frac{C^2}{\delta^2}\right)=:\tau(\delta).
		\end{align}
		Note that $\tau(\delta)\to 0$ as $\delta\to 0$.
	\end{lemma}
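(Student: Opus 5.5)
The plan is to pass from $d_\eta$ down to KL in three steps: first to a total variation distance between smoothed marginals, then to Hellinger distance, and finally to KL. The heart of the argument is a deconvolution inequality that turns separation of the priors into separation of the normal mixtures at scale $\sigma_i$.

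\emph{Step 1: from $d_\eta$ to the priors.} Smoothing by $\cN(0,\eta^2)$ is a $1$-Lipschitz operation in $W_1$. Hence $d_\eta(\pi_1,\pi_2)\ge\delta$ forces $W_1(\pi_1,\pi_2)\ge\delta$. Because both priors are supported on $[-M,M]$, this gives $W_2(\pi_1,\pi_2)\ge W_1(\pi_1,\pi_2)\ge \delta$.

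\emph{Step 2: the Nguyen-type deconvolution bound.} As in the proof of \cref{thm:deconvolution}, \citet[Theorem 2]{Nguyen2013} gives
\[
W_2^2(\pi_1,\pi_2)\le C_0\bigl(-\log \TV(f_{\pi_1,\sigma_i},f_{\pi_2,\sigma_i})\bigr)^{-1}
\]
for priors on $[-M,M]$. The constant $C_0$ can be taken uniform over $\sigma_i\in[c,C]$, using \cref{asn:appopt}; this is the point that needs care. Rearranging yields
\[
\TV(f_{\pi_1,\sigma_i},f_{\pi_2,\sigma_i})\ge \exp\!\left(-C_0/\delta^2\right).
\]
The main obstacle is this uniformity in $\sigma_i$. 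To handle it, I will invoke Nguyen's argument directly. The Fourier transform of $\phi(\cdot;\sigma)$ is $e^{-\sigma^2\omega^2/2}$, which is bounded below by $e^{-C^2\omega^2/2}$ on a frequency window. Smoothing the $W_1$ test function at bandwidth $\asymp\delta$ then gives an inequality of the form $\delta\lesssim \TV\cdot e^{C'\omega_0^2}+1/\omega_0$. Optimizing with $\omega_0\asymp 1/\delta$ produces $\TV\gtrsim \exp(-C''/\delta^2)$, with constants depending only on $M$ and the upper bound $C$ on the $\sigma_i$.

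\emph{Step 3: from TV to KL.} I will chain the standard inequalities $\TV\le \sqrt{2}\,\Hel$ and $\KL\ge 2\Helsq\ge \TV^2$ (Pinsker's inequality works equally well). This gives
\[
\KL(f_{\pi_1,\sigma_i}\,|\,f_{\pi_2,\sigma_i})\ge \tfrac12\TV^2\ge \tfrac12\exp(-2C_0/\delta^2).
\]
This matches \eqref{eq:curvaturebd} with $C^2=C_0$. Since the bound is uniform in $i$, taking the infimum over $1\le i\le n$ is immediate, and $\tau(\delta)\to0$ as $\delta\to0$ is clear from its form.
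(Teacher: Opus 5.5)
Your proposal is correct and follows the paper's proof essentially step for step. The chain is the same: the contraction $d_\eta\le W_1$, then Nguyen's deconvolution inequality with a constant uniform over $\sigma_i\in[c,C]$ from \cref{asn:appopt}, then Pinsker, giving $\tau(\delta)=\frac12\exp(-2C^2/\delta^2)$. The differences are cosmetic: you route through $W_2\ge W_1$ instead of using the $W_1$ form of Nguyen's bound; using $\KL\ge\frac12\TV^2$ unconditionally removes the need for the paper's WLOG reduction to $\KL\le 1/2$; and your Fourier sketch makes explicit the uniformity in $\sigma_i$ (depending only on the upper bound $C$), which the paper simply asserts.
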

	
	The next lemma says that the likelihood at any $\pi$ is separated from the likelihood at $\pst$ with high probability, provided $\pi$ and $\pst$ are well separated in the $d_{\eta}(\cdot,\cdot)$ metric. 
	
	\begin{lemma}\label{lem:ptwiseconc}
		Fix any $\eta,\delta>0$. Let $\tau\equiv\tau(\delta)$ be defined as in \eqref{eq:curvaturebd}. Suppose \cref{asn:taildef} and $\mathcal{V}_n\to 0$ holds. Then for any $\pi\in\mathcal{P}([-M,M])$ satisfying $d_{\eta}(\pi,\pst)\ge\delta$ , we have:
		$$\PP\left(\frac{1}{n}\sum_{i=1}^n \log f_{\pi,\sigma_i}(X_i)-\frac{1}{n}\sum_{i=1}^n \log f_{\pst,\sigma_i}(X_i)\ge -\tau/3\right) \to 0.$$
	\end{lemma}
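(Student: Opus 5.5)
The plan is to split the log-likelihood difference into its centered fluctuation and its mean, and to show that the mean is at most $-\tau$ asymptotically while the fluctuation vanishes in probability. Write $\kpi(x) = \log f_{\pi,\sigma_i}(x) - \log f_{\pistar,\sigma_i}(x)$ as in \eqref{eq:kpi}. Then
\[
\avgn \kpi(X_i) = \avgn\bigl(\kpi(X_i)-\E\,\kpi(X_i)\bigr) + \avgn \E_{X\sim\mu_i}\kpi(X).
\]
I will show that the second term is at most $-\tau + o(1)$ and the first is $o_{\Prob}(1)$. Together these give $\Prob(\avgn \kpi(X_i)\ge -\tau/3)\to 0$.

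\emph{Mean term.} Let $Z_i\sim f_{\pistar,\sigma_i}$. Under the normal law, $\E\,\kpi(Z_i) = -\KL(f_{\pistar,\sigma_i}\|f_{\pi,\sigma_i})$. Since $d_\eta(\pi,\pistar)\ge\delta$, \cref{lem:curvaturebd} gives that this is at most $-2\tau$ uniformly in $i$. To pass from $\mu_i$ to the normal mixture, I use the Lipschitz bound on $\kpi$: its derivative equals $(h_\pi(x;\sigma_i)-h_{\pistar}(x;\sigma_i))/\sigma_i^2$, which is bounded by $2M/c^2$ because posterior means lie in $[-M,M]$. Kantorovich--Rubinstein duality then gives
\[
\bigl|\E_{\mu_i}\kpi - \E_{f_{\pistar,\sigma_i}}\kpi\bigr| \le (2M/c^2)\,W_1(\mu_i,f_{\pistar,\sigma_i}).
\]
Averaging over $i$, the mean term is at most $-2\tau + C\mathcal{V}_n$, which is at most $-\tau$ for large $n$ since $\mathcal{V}_n\to 0$.

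\emph{Fluctuation term.} I want to apply the first part of \cref{asn:taildef} to the functions $f_i := \kpi/L$ for a suitable constant $L$. This requires $|f_i'(0)|\le 1$ and $\|f_i''\|_\infty\le 1$.
\begin{itemize}
\item The first derivative is bounded by $2M/c^2$, as above.
\item The second derivative is $\bigl(\Var_\pi(\theta\mid x)-\Var_{\pistar}(\theta\mid x)\bigr)/\sigma_i^4$, so $|\kpi''|\le M^2/c^4$ uniformly in $x$ and $i$.
\end{itemize}
Taking $L := \max(2M/c^2, M^2/c^4, 1)$ makes the rescaled functions admissible. \cref{asn:taildef} then yields $\avgn(\kpi(X_i)-\E\,\kpi(X_i)) = o_{\Prob}(1)$.

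\emph{Conclusion.} On the event that the fluctuation is below $2\tau/3$, we get $\avgn\kpi(X_i) \le -\tau/3$, strictly below for large $n$. Hence the probability in the statement tends to $0$.

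The expected obstacle is the mean term, in particular matching the uniform KL curvature of \cref{lem:curvaturebd} (stated with the known $\sigma_i$) to the misspecified expectation under $\mu_i$. The Lipschitz/$W_1$ transfer handles this cleanly because the $\sigma_i$ are bounded below by $c$ under \cref{asn:appopt}.
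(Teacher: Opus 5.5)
Your proposal is correct and matches the paper's proof essentially step for step. Both arguments center via \cref{asn:taildef} after bounding $\kpi'$ and $\kpi''$ uniformly, transfer the mean from $\mu_i$ to $f_{\pistar,\sigma_i}$ through the Lipschitz/$W_1$ bound, and invoke \cref{lem:curvaturebd} for the KL gap. Your derivative bounds, which track the $\sigma_i^{-2}$ and $\sigma_i^{-4}$ factors, are in fact more careful than the paper's.

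There is one harmless slip. \cref{lem:curvaturebd} gives $\E\,\kpi(Z_i)\le-\tau$, not $-2\tau$. The slack in the threshold $-\tau/3$ absorbs this: the mean term is eventually below, say, $-5\tau/6$, so the fluctuation only needs to stay below $\tau/2$.
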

	
	\noindent For the next result, we define a pseudo-metric between $\pi_1,\pi_2\in\mathcal{P}([-M,M])$ as follows: Fix any $\eta>0$, $B>0$ and let $G_{\pi,\eta}$ denote the cumulative distribution function of $N(0,\eta^2)*\pi$ for $\pi\in\mathcal{P}([-M,M])$. We define the following pseudo-metric:
	$$\tilde{d}_{\eta,B}(\pi_1,\pi_2) := \max_{x:|x|\le B} |G_{\pi_1,\eta}(x)-G_{\pi_2,\eta}(x)|.$$
	This next lemma shows that if two $\pi_1,\pi_2\in \cP([-M,M])$ are close in the $\tilde{d}_{\eta,B}(\cdot,\cdot)$ pseudo-metric, then their likelihoods are close too.
	
	\begin{lemma}\label{lem:coverbd}
		Fix any constants $C_0,\iota>0$ and suppose \cref{asn:appopt} holds. Then, on the event $\sum_{i=1}^n X_i^2\le C_0 n$, there exists $\eta,B,L>0$ (depending on $C_0,\iota$) such that for any $\pi_1,\pi_2\in\mathcal{P}([-M,M])$, we have
		$$\bigg|\frac{1}{n}\sum_{i=1}^n \log{f_{\pi_1,\sigma_i}(X_i)} - \frac{1}{n}\sum_{i=1}^n \log{f_{\pi_2,\sigma_i}(X_i)}\bigg|\le L \tilde{d}_{\eta,B}(\pi_1,\pi_2)+\iota.$$
	\end{lemma}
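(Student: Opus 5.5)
We aim to prove Lemma~\ref{lem:coverbd} by splitting indices into those with $|X_i|$ moderate and those with $|X_i|$ large, and by bounding the log-likelihood difference on the moderate part through the CDF discrepancy $\tilde d_{\eta,B}$.

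\textbf{Step 1 (large $|X_i|$).} Fix a truncation level $R>0$. By Lemma~\ref{lem:log-mixture-bound}, each $|\log f_{\pi,\sigma_i}(X_i)|$ is at most $c + (|X_i|+M)^2/(2c^2)$, with $c$ the uniform lower bound on $\sigma_i$ from Assumption~\ref{asn:appopt}. On $\sum_i X_i^2\le C_0 n$, Markov's inequality gives $\#\{i:|X_i|>R\}\le C_0 n/R^2$. Cauchy--Schwarz then gives
\[
\frac1n\sum_{i:|X_i|>R}(1+X_i^2) \le \frac{C_0}{R^2}+\frac{C_0}{R^2}\cdot\frac{R^2}{R^2}\lesssim C_0/R^2 .
\]
More simply, $\frac1n\sum_{|X_i|>R}X_i^2\le \frac1n\sum_i X_i^4\ldots$ is not available, so we use $1+X_i^2\le 2X_i^2$ for $R\ge1$ directly. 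This yields the bound
\[
\frac1n\sum_{|X_i|>R}X_i^2\le C_0\ \text{(not small)}.
\]
This is the main obstacle: the second moment alone does not make the tail contribution small. The fix is that the \emph{difference} $\log f_{\pi_1,\sigma_i}(x)-\log f_{\pi_2,\sigma_i}(x)$ grows only linearly in $|x|$, by the ratio bound \eqref{eq:ratio-growth}: it is at most $2M|x|/c^2+M^2/(2c^2)$. The tail contribution is therefore at most
\[
\frac1n\sum_{|X_i|>R}\bigl(a|X_i|+b\bigr)\le \frac{a}{R}\cdot\frac1n\sum_i X_i^2+\frac{bC_0}{R^2}\le \frac{aC_0}{R}+\frac{bC_0}{R^2},
\]
which is below $\iota/2$ once $R=R(C_0,\iota)$ is large.

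\textbf{Step 2 (moderate $|X_i|\le R$).} On $[-R,R]$, densities $f_{\pi,\sigma}$ are bounded below by a constant depending on $R,M,C$, uniformly in $\sigma\in[c,C]$. Hence $\log$ is Lipschitz there, and it suffices to bound $|f_{\pi_1,\sigma}(x)-f_{\pi_2,\sigma}(x)|$ uniformly over $|x|\le R$ and $\sigma\in[c,C]$. Choose $\eta<c/2$, say $\eta=c/2$. Then $N(0,\sigma^2)=N(0,\eta^2)*N(0,\sigma^2-\eta^2)$, so
\[
f_{\pi,\sigma}(x)=\int \phi(x-y;s)\,dG_{\pi,\eta}(y), \qquad s=\sqrt{\sigma^2-\eta^2}\ge \tfrac{\sqrt3}{2}c .
\]
Integrating by parts,
\[
f_{\pi_1,\sigma}(x)-f_{\pi_2,\sigma}(x)=\int \partial_y\phi(x-y;s)\,\bigl(G_{\pi_1,\eta}-G_{\pi_2,\eta}\bigr)(y)\,dy ,
\]
with vanishing boundary terms. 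We split this integral at $|y|\le B$ and $|y|>B$.
\begin{itemize}
\item[(i)] On $|y|\le B$, the integrand is bounded by $\tilde d_{\eta,B}(\pi_1,\pi_2)\int|\partial_y\phi|\,dy\lesssim \tilde d_{\eta,B}/c$.
\item[(ii)] On $|y|>B$, we use $|G_{\pi_1,\eta}-G_{\pi_2,\eta}|\le 1$ together with the Gaussian tail of $\partial_y\phi(x-y;s)$ for $|x|\le R$. This part is at most $\iota'$ once $B=B(R,\iota')$ is large.
\end{itemize}
Taking $\iota'$ small relative to the Lipschitz constant of $\log$, the moderate-index contribution is at most $L\,\tilde d_{\eta,B}(\pi_1,\pi_2)+\iota/2$.

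\textbf{Conclusion.} Combining Steps 1 and 2 gives the lemma, with $\eta,B,L$ depending only on $C_0$, $\iota$, and the fixed $M,c,C$.
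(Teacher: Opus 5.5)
Your argument is correct, but it takes a different route from the paper.

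\textbf{The paper's route.} The paper's decomposition is in $\theta$, not in $x$. It writes $f_{\pi,\sigma_i}$ as an $N(0,\sigma_i^2-\eta^2)$-mixture over $G_{\pi,\eta}$. It then introduces the monotone transport map $F_\eta=G_{\pi_1,\eta}^{-1}\circ G_{\pi_2,\eta}$ and expresses $f_{\pi_1,\sigma_i}(X_i)/f_{\pi_2,\sigma_i}(X_i)$ as an average under the $\pi_2$-smoothed ``posterior'' $\langle\cdot\rangle_i$. Jensen and Cauchy--Schwarz give a one-sided lower bound in terms of $\langle (F_\eta(\theta)-\theta)^2\rangle_i$ and $\langle H_i\rangle_i\lesssim X_i^2+1$. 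The displacement is controlled in two regimes:
\begin{itemize}
\item $|F_\eta(\theta)-\theta|\le C'\tilde d_{\eta,B}(\pi_1,\pi_2)$ on $[-B,B]$, by uniform Lipschitz continuity of the quantile functions;
\item $|F_\eta(\theta)-\theta|\le 2M$ everywhere.
\end{itemize}
The leftover posterior mass $\frac1n\sum_i\langle\mathbf{1}(|\theta|>B)\rangle_i$ is made small by discarding the at most $nt/2$ indices with $|X_i|\ge\sqrt{2C_0/t}$ and lower-bounding the normalizers of the rest. The absolute value comes from swapping $\pi_1$ and $\pi_2$.

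\textbf{Your route.} You truncate in $x$ instead. The indices with large $|X_i|$ are handled by the linear growth bound \eqref{eq:ratio-growth} together with $|X_i|\le X_i^2/R$ on $\{|X_i|>R\}$. On $[-R,R]$, the uniform density lower bound plus an integration by parts turns $f_{\pi_1,\sigma}-f_{\pi_2,\sigma}$ directly into $G_{\pi_1,\eta}-G_{\pi_2,\eta}$. You then split at $|y|\le B$ and use the Gaussian tail of $\partial_y\phi$.

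\textbf{Comparison.} Your version is shorter and avoids quantile maps and Gibbs averages. It is two-sided at once and yields explicit constants. It also makes plain that $\eta=c/2$ can be chosen independently of $(C_0,\iota)$. That is what the later use in \cref{lem:unifbd} actually requires, since there $\iota=\tau/10$ depends on $\delta$ while $\eta$ must not. The paper's Jensen/transport argument is the template adapted from the PCA and regression signal-recovery literature. There the likelihood is a Gibbs partition function, and coordinatewise tools like \eqref{eq:ratio-growth} are less readily available. In the present scalar Gaussian-mixture setting, your more elementary argument reaches the same conclusion.

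\textbf{One cleanup.} The first display in your Step~1, claiming $\frac1n\sum_{|X_i|>R}(1+X_i^2)\lesssim C_0/R^2$ via Cauchy--Schwarz, is false and should be deleted. You rightly abandon it, and the linear growth of the log-ratio is exactly what makes the tail contribution small.
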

	
	\noindent The next result shows that the $\sum_{i=1}^n X_i^2\le C_0 n$ holds with high probability for large $C_0$. 
	\begin{lemma}\label{lem:truncate}
		Under \cref{asn:taildef}, there exists large enough $C_0>0$ such that 
		$$\PP\left(\sum_{i=1}^n X_i^2\ge C_0 n\right)\to 0.$$
	\end{lemma}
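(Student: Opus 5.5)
The goal is to show that $\frac{1}{n}\sum_{i=1}^n X_i^2$ is stochastically bounded by a deterministic constant. The approach is to split it into a part handled by the concentration half of \cref{asn:taildef} and a tail part handled by the uniform integrability half. Fix a truncation level $T>0$ and a smooth, even, bounded-curvature surrogate of $x\mapsto x^2\wedge T^2$. Concretely, let $f_T(x)$ be even, equal to $x^2/2$ for $|x|\le T$, with $f_T''$ taking values in $[-1,1]$ and $f_T$ constant (about $T^2$) for $|x|\ge 2T$. Such a function is easy to build by integrating twice a continuous function that equals $1$ on $[-T,T]$, drops linearly to $-1$, and returns to $0$ so that $f_T'$ vanishes beyond $2T$. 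In addition $f_T'(0)=0$, so the hypotheses $|f_T'(0)|\le 1$ and $\|f_T''\|_\infty\le1$ of \cref{asn:taildef} hold. The construction also gives $x^2\le 2f_T(x)+x^2\mathbf{1}(|x|\ge T)$ and $0\le f_T(x)\lesssim T^2$.

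Next, write
\[
\frac1n\sum_{i=1}^n X_i^2 \le \frac{2}{n}\sum_{i=1}^n\bigl(f_T(X_i)-\E f_T(X_i)\bigr) + \frac{2}{n}\sum_{i=1}^n \E f_T(X_i) + \frac1n\sum_{i=1}^n X_i^2\mathbf{1}(|X_i|\ge T).
\]
I bound the three terms separately.

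\begin{enumerate}
\item The first term tends to $0$ in probability by \cref{asn:taildef}, applied to the fixed sequence $f_i\equiv f_T$.
\item For the second term, $f_T(x)\le x^2/2$. By the tail condition in \cref{asn:taildef}, for some fixed $T_0$ and all large $n$ we have $\frac1n\sum_i \E X_i^2 \le T_0^2+1$. Hence this term is at most $T_0^2+1=:C'$ for large $n$.
\item For the third term, Markov's inequality gives
\[
\PP\Bigl(\tfrac1n\sum_{i=1}^n X_i^2\mathbf{1}(|X_i|\ge T)\ge 1\Bigr)\le \frac1n\sum_{i=1}^n\E\bigl[X_i^2\mathbf{1}(|X_i|\ge T)\bigr],
\]
and the double limsup in \cref{asn:taildef} makes the right side small uniformly in large $n$ once $T$ is large.
\end{enumerate}

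To conclude, take $C_0=C'+3$. Then
\[
\PP\Bigl(\tfrac1n\sum_{i=1}^n X_i^2\ge C_0\Bigr)\le \PP(\text{first term}\ge 1)+\PP(\text{third term}\ge 1).
\]
Taking $\limsup_n$ leaves only the Markov bound, which is at most $\epsilon(T)$ with $\epsilon(T)\to 0$ as $T\to\infty$. Since $C_0$ does not depend on $T$, letting $T\to\infty$ shows the probability tends to $0$.

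In fact, the bound on the second term already uses only the tail condition, so a cruder argument works: by Markov, $\PP\bigl(\frac1n\sum_i X_i^2\ge C_0\bigr)\le C'/C_0$. That alone gives only "small" rather than "$\to0$", which is why the concentration split is needed. The main obstacle is this step: the smooth truncation must satisfy the curvature constraint of \cref{asn:taildef}, and $C_0$ must be fixed before $T$ is sent to infinity.
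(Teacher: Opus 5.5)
Your argument is correct, but it takes a detour that the paper avoids. The test function $f(x)=x^2/2$ is itself admissible in \cref{asn:taildef}, since $f'(0)=0$ and $f''\equiv 1$; the assumption asks only for a bounded second derivative, not for $f$ to be bounded. The paper therefore applies the concentration half directly to $f_i(x)=x^2/2$, which gives $\frac1n\sum_i (X_i^2-\E X_i^2)\to 0$ in probability. It combines this with $\frac1n\sum_i \E X_i^2\le \tilde C_0$ for large $n$, obtained from the tail half exactly as in your second step, and concludes with $C_0=3\tilde C_0$. So your proof with $f_T$ replaced by $x^2/2$ (the third term then disappears) is the paper's proof, and the ``main obstacle'' you name does not arise. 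Your argument never uses boundedness of $f_T$, so as the assumption is stated the truncation buys nothing. Its real merit is robustness: you handle the tail of $X_i^2$ through the uniform-integrability half via Markov's inequality. Your proof would therefore still work if the law-of-large-numbers half of \cref{asn:taildef} were assumed only for bounded test functions, whereas the paper's one-line proof needs it for quadratically growing ones.

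One small fix to the construction. With $f_T''$ continuous and $|f_T''|\le 1$, $f_T'$ cannot fall from $T$ at $x=T$ to $0$ at $x=2T$. That would require $\int_T^{2T} f_T''=-T$, forcing $f_T''=-1$ almost everywhere on $[T,2T]$. Either lengthen the transition region or let $f_T''$ be piecewise constant. None of the inequalities you use are affected.
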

	
	The final technical lemma shows a high probability separation between the likelihood at $\pi$ and $\pst$ uniformly over all $\pi$ such that $d_{\eta}(\pi,\pst)\ge \delta$. 
	\begin{lemma}\label{lem:unifbd}
		Fix any $\delta>0$ and suppose \cref{asn:taildef,asn:appopt} and $\mathcal{V}_n\to 0$ hold. Then there exists $\tau>0$ (depending on $\delta$) and $\eta>0$ (free of $\delta$) such that
		$$\PP\left(\sup_{\pi\in\mathcal{P}([-M,M]): d_{\eta}(\pi,\pst)\ge \delta}\frac{1}{n}\sum_{i=1}^n \big(\log{f_{\pi,\sigma_i}(X_i)}-\log{f_{\pst,\sigma_i}(X_i)}\big)\ge -\tau\right)\to 0$$
		as $n\to\infty$.
	\end{lemma}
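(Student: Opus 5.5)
The plan is to combine the pointwise separation of \cref{lem:ptwiseconc} with a finite covering argument under the pseudo-metric $\tilde{d}_{\eta,B}$, using \cref{lem:coverbd} to transfer the bound from net points to all $\pi$, and \cref{lem:truncate} to work on a high-probability event. First I fix $\eta>0$ as the value produced by \cref{lem:coverbd} applied with a constant $C_0$ from \cref{lem:truncate} and a tolerance $\iota$ to be chosen. Since $\eta$ depends only on $C_0$ and $\iota$, it is free of $\delta$ as long as $\iota$ is a fixed absolute choice. Getting this order of quantifiers right is the main subtlety, so I will handle it with care.

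Here is how I would arrange the quantifiers. Given $\delta$, let $\tau(\delta)$ be as in \cref{lem:curvaturebd}. The bound there is stated for arbitrary $\eta$, with $C$ free of $\eta$, so $\tau(\delta)$ does not depend on $\eta$. I then apply \cref{lem:coverbd} with tolerance $\iota=\tau(\delta)/12$, which yields $\eta,B,L$. If $\eta$ is forced to depend on $\delta$ this way, I would instead run the covering argument for each $\eta$. Alternatively I would note that $d_\eta$ for different $\eta$ metrize the same topology on the compact set $\cP([-M,M])$, so $\{d_{\eta}(\pi,\pst)\ge\delta\}\subseteq\{d_{\eta'}(\pi,\pst)\ge\delta'\}$ for some $\delta'(\delta,\eta,\eta')>0$ by compactness. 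This lets me fix one $\eta$ and adjust $\delta$ and $\tau$, which keeps $\eta$ free of $\delta$ as claimed. Next, the set $K_\delta=\{\pi\in\cP([-M,M]):d_\eta(\pi,\pst)\ge\delta\}$ is weakly closed and hence compact. Also, $\tilde{d}_{\eta,B}$ is continuous in the weak topology, because Gaussian-smoothed CDFs converge uniformly on $[-B,B]$ under weak convergence. So I can cover $K_\delta$ by finitely many $\tilde{d}_{\eta,B}$-balls of radius $\tau/(12L)$, centered at $\pi_1,\dots,\pi_m\in K_\delta$. The number $m$ depends on $\delta$ but not on $n$.

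On the event $E_n=\{\sum_i X_i^2\le C_0 n\}$, \cref{lem:coverbd} gives, for any $\pi$ in the ball around $\pi_j$,
\[
\frac1n\sum_{i=1}^n\log f_{\pi,\sigma_i}(X_i)\le \frac1n\sum_{i=1}^n \log f_{\pi_j,\sigma_i}(X_i)+\frac{\tau}{12}+\frac{\tau}{12}.
\]
Therefore the event in the statement with threshold $-\tau/6$ is contained in
\[
E_n^c\cup\bigcup_{j=1}^m\Bigl\{\tfrac1n\textstyle\sum_i(\log f_{\pi_j,\sigma_i}(X_i)-\log f_{\pst,\sigma_i}(X_i))\ge-\tau/3\Bigr\}.
\]
Here $\PP(E_n^c)\to0$ by \cref{lem:truncate}. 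Each of the $m$ pointwise events has vanishing probability by \cref{lem:ptwiseconc}, since $\pi_j\in K_\delta$. Because $m$ is fixed, a union bound completes the argument, with the lemma's $\tau$ taken as $\tau(\delta)/6$.

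I expect the main obstacle to be ensuring that the covering number is finite and independent of $n$. The heteroskedastic $\sigma_i$ could otherwise break this. \cref{lem:coverbd} absorbs the $\sigma_i$-dependence uniformly, and compactness of $\cP([-M,M])$ handles the rest.
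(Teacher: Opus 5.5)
Your proposal is correct and follows essentially the same route as the paper: restrict to $\{\sum_i X_i^2\le C_0 n\}$ via \cref{lem:truncate}, take a finite $\tilde{d}_{\eta,B}$-net of the separated set, transfer the bound from net points via \cref{lem:coverbd}, and union-bound \cref{lem:ptwiseconc} over the net; the one difference is that you get finiteness of the net from weak compactness, whereas the paper uses a packing argument based on the equi-Lipschitz smoothed CDFs, and the two are equivalent. Your quantifier worry dissolves more simply than your compactness-inclusion detour suggests, because the $\eta$ defining the constraint set need not equal the $\eta$ of the covering pseudo-metric (the paper writes $\upsilon$ for the former), and \cref{lem:ptwiseconc} holds for any $\eta$ with $\tau(\delta)$ free of $\eta$, so your main argument already works verbatim for any fixed constraint-set $\eta$.
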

	
	The final lemma helps characterize the price paid due to estimating the variances $\sigma_1,\ldots ,\sigma_n$ with $\hat{\sigma}_1,\ldots ,\hat{\sigma}_n$. 
	
	\begin{lemma}\label{lem:diffbd}
		Suppose $0<C_1<\sigma,\tilde{\sigma}<C_2<\infty$. Then for any $T\ge 1$ and any $\pi$ supported on $[-M,M]$, the following holds:
		$$\sup_{|x|\le T}\bigg|\log\int\phi\left(\frac{x-\theta}{\sigma}\right)\,d\pi(\theta)-\log\int\phi\left(\frac{x-\theta}{\tilde{\sigma}}\right)\,d\pi(\theta)\bigg|\le \left(C_1+\frac{C_2}{C_1^4}\right)(T+M)^2|\sigma-\tilde{\sigma}|.$$
	\end{lemma}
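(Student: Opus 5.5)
The natural route is a first-order (in $\sigma$) perturbation argument for the log-mixture. Write $\ell_\sigma(x) := \log\int\phi((x-\theta)/\sigma)\,d\pi(\theta)$. Since $\min(\sigma,\tilde\sigma)>C_1$ and $\max(\sigma,\tilde\sigma)<C_2$, the mean value theorem gives
\[
|\ell_\sigma(x)-\ell_{\tilde\sigma}(x)|\le |\sigma-\tilde\sigma|\sup_{s\in[C_1,C_2]}\bigl|\partial_s \ell_s(x)\bigr|.
\]
The whole proof then reduces to bounding this derivative uniformly over $|x|\le T$.

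Differentiating under the integral (legitimate because $\pi$ has compact support and the integrand is smooth in $s$), we get
\[
\partial_s \ell_s(x) = \frac{\int \phi\!\left(\frac{x-\theta}{s}\right)\frac{(x-\theta)^2}{s^3}\,d\pi(\theta)}{\int\phi\!\left(\frac{x-\theta}{s}\right)d\pi(\theta)},
\]
using $\phi'(u)=-u\phi(u)$ and $\partial_s\,((x-\theta)/s) = -(x-\theta)/s^2$. This is a posterior expectation, under the posterior with weights proportional to $\phi((x-\theta)/s)\,d\pi(\theta)$, of the nonnegative quantity $(x-\theta)^2/s^3$.

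Because $\theta\in[-M,M]$ and $|x|\le T$, we have $(x-\theta)^2\le (T+M)^2$. Hence $|\partial_s\ell_s(x)|\le (T+M)^2/s^3\le (T+M)^2/C_1^3$. No lower bound on the mixture density is needed, since the ratio is a weighted average.

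This already gives a bound of the form $C(T+M)^2|\sigma-\tilde\sigma|$ with $C=C_1^{-3}$. To match the stated constant $C_1 + C_2/C_1^4$, we only need $C_1^{-3}\le C_1+C_2/C_1^4$. This holds because $C_2>C_1$ implies $C_2/C_1^4> C_1^{-3}$. So the stated bound follows, possibly with slack.

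There is no real obstacle here. The only points requiring care are justifying differentiation under the integral, which follows by dominated convergence since the integrand and its $s$-derivative are bounded on $[C_1,C_2]\times[-M,M]$ for fixed $x$, and checking that the mean value theorem path stays within $[C_1,C_2]$, which it does by convexity of the interval.
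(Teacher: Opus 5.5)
Your proof is correct, and it takes a slightly different route from the paper's.

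The paper's argument is calculus-free. It writes the kernel at scale $\tilde\sigma$ as an explicit multiple of the kernel at scale $\sigma$, with multiplier $\exp\bigl(-\tfrac12(\tilde\sigma^{-2}-\sigma^{-2})(x-\theta)^2\bigr)$. The identity also carries a prefactor $\sigma/\tilde\sigma$, which properly belongs to the normalized kernel $\phi(x-\theta;\sigma)=\sigma^{-1}\phi((x-\theta)/\sigma)$ used when the lemma is applied in the proof of Theorem~\ref{thm:depcons}. The paper then bounds this multiplier uniformly over $|x-\theta|\le T+M$ and integrates against $\pi$ to get a two-sided bound on the ratio of the two mixtures. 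Finally it takes logarithms via $\log(1+u)\le u$. The additive term contributed by the prefactor is what the assumption $T\ge 1$ is used to absorb.

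You instead apply the mean value theorem in the scale and recognize $\partial_s\ell_s(x)$ as a posterior average of $(x-\theta)^2/s^3$. This is the infinitesimal version of the same pointwise kernel comparison. Your route gives the sharper constant $C_1^{-3}$, needs neither $T\ge 1$ nor any absorption step, and applies verbatim to the lemma as stated with the unnormalized kernel. Its only cost is the routine justification of differentiating under the integral, which you supply.

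Your argument also covers the normalized kernel, the version actually used downstream, with essentially no extra work. The derivative then picks up an additional $-1/s$, so
\[
|\partial_s\ell_s(x)|\le\max\{1/C_1,\,(T+M)^2/C_1^3\}.
\]
Once $T\ge 1$, this is still below $(C_1+C_2/C_1^4)(T+M)^2$, because $1/C_1\le C_1+C_1^{-3}$.
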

	
	\begin{proof}
		Note that
		\begin{align}\label{eq:gaussVarcomp}
			\phi\left(\frac{x-\theta}{\tilde{\sigma}}\right)=\left(1+\frac{\sigma-\tilde{\sigma}}{\tilde{\sigma}}\right)\exp\left(-\frac{1}{2}\frac{\sigma^2-\tilde{\sigma}^2}{\sigma^2 \tilde{\sigma}^2}(x-\theta)^2\right)\phi\left(\frac{x-\theta}{\sigma}\right).
		\end{align}
		Therefore, for all $|x|\le T$ and $\pi$ supported on $[-M,M]$, we have:
		\begin{align*}
			\frac{\int \phi\left(\frac{x-\theta}{\tilde{\sigma}}\right)\,d\pi(\theta)}{\int \phi\left(\frac{x-\theta}{\sigma}\right)\,d\pi(\theta)} \vee \frac{\int \phi\left(\frac{x-\theta}{\sigma}\right)\,d\pi(\theta)}{\int \phi\left(\frac{x-\theta}{\tilde{\sigma}}\right)\,d\pi(\theta)} \le (1+C_1|\sigma-\tilde{\sigma}|)\exp\left(\frac{C_2}{C_1^4}(T+M)^2 |\sigma-\tilde{\sigma}|\right).
		\end{align*}
		The conclusion follows by the monotonicity of the log function and the elementary inequality $\log{(1+x)}\le x$ for $x>0$.
	\end{proof}
	
	\begin{proof}[Proof of \cref{thm:depcons}]
		
		Fix any arbitrary $T > 0$.
		By \cref{lem:diffbd}, there exists a deterministic constant $c_T > 0$ such that
		\begin{align}\label{eq:step1}
			&\;\;\;\;\sup_{\pi \in \mathcal{P}([-M,M])}
			\frac{1}{n}\sum_{i=1}^{n}
			\Bigl|
			\log\!\int \phi(X_i - \theta;\,\hat{\sigma}_i)\,d\pi(\theta)
			-
			\log\!\int \phi(X_i - \theta;\,\sigma_i)\,d\pi(\theta)
			\Bigr|
			\mathbf{1}(|X_i| \leq T)
			\nonumber \\ & \leq\;
			\frac{c_T}{n}\sum_{i=1}^{n}|\hat{\sigma}_i - \sigma_i|
			\;\xrightarrowP{}\; 0, 
		\end{align}
		where the last line follows by \cref{asn:appopt}. 
		On the other hand, by applying \cref{lem:log-mixture-bound}, there exists a
		constant $\tilde{C}$ free of $T > 0$ such that for all large enough $T$ we have
		\begin{equation}\label{eq:step2}
			\frac{1}{n}\sum_{i=1}^{n}
			\Bigl|
			\log\!\int \phi(X_i - \theta;\,\hat{\sigma}_i)\,d\pihat(\theta)
			\Bigr|
			\mathbf{1}(|X_i| > T)
			\;\leq\;
			\tilde{C}\cdot\frac{1}{n}\sum_{i=1}^{n} X_i^2\,\mathbf{1}(|X_i| > T)
			\;\xrightarrowP{}\; 0,
		\end{equation}
		in the double limit $n\to\infty$ followed by $T\to\infty$, by
		\cref{asn:taildef}.
		Combining \eqref{eq:step1} and \eqref{eq:step2} we get:
		\[
		\frac{1}{n}\sum_{i=1}^{n}
		\Bigl|
		\log\!\int \phi(X_i-\theta;\,\hat{\sigma}_i)\,d\pihat(\theta)
		-
		\log\!\int \phi(X_i-\theta;\,\sigma_i)\,d\pihat(\theta)
		\Bigr|
		\;\xrightarrowP{}\; 0.
		\]
		In a similar vein, we have
		\[
		\frac{1}{n}\sum_{i=1}^{n}
		\Bigl|
		\log\!\int \phi(x_i-\theta;\,\hat{\sigma}_i)\,d\pst(\theta)
		-
		\log\!\int \phi(x_i-\theta;\,\sigma_i)\,d\pst(\theta)
		\Bigr|
		\;\xrightarrowP{}\; 0.
		\]
		
		Next we fix $\delta > 0$ and let $\tau > 0$ (depending on $\delta$) and $\eta > 0$ (free of
		$\delta$) be chosen according to \cref{lem:unifbd}.  The above observations coupled with \cref{asn:appopt} 
		imply that
		\begin{align*}
			&\;\;\;\;\limsup_{n\to\infty}
			\PP\bigl(d_\eta(\pihat,\,\pst) > \delta\bigr)\\
			&\leq
			\limsup_{n\to\infty}
			\PP\Biggl(
			\exists\,\pi\in\mathcal{P}([-M,M]),\;d_\eta(\pi,\pst)>\delta
			\;:\;
			\frac{1}{n}\sum_{i=1}^{n}
			\log
			\frac{
				\int\phi(x_i-\theta;\,\hat{\sigma}_i)\,d\pi(\theta)
			}{
				\int\phi(x_i-\theta;\,\hat{\sigma}_i)\,d\pst(\theta)
			}
			\geq -\frac{\tau}{2}
			\Biggr)\\
			&\leq
			\limsup_{n\to\infty}
			\PP \Biggl(
			\exists\,\pi\in\mathcal{P}([-M,M]),\;d_\eta(\pi,\pst)>\delta
			\;:\;
			\frac{1}{n}\sum_{i=1}^{n}
			\log
			\frac{
				\int\phi(x_i-\theta;\,\sigma_i)\,d\pi(\theta)
			}{
				\int\phi(x_i-\theta;\,\sigma_i)\,d\pst(\theta)
			}
			\geq -\tau
			\Biggr)\\
			&\leq
			\limsup_{n\to\infty}
			\PP\Biggl(
			\sup_{\substack{\pi\in\mathcal{P}([-M,M])\\d_\eta(\pi,\pst)>\delta}}
			\frac{1}{n}\sum_{i=1}^{n}
			\log
			\frac{
				\int\phi(x_i-\theta;\,\sigma_i)\,d\pi(\theta)
			}{
				\int\phi(x_i-\theta;\,\sigma_i)\,d\pst(\theta)
			}
			\geq -\tau
			\Biggr) = 0\,.
		\end{align*}
		This implies $\pihat$ converges weakly to $\pst$ in probability.
		Since both $\pihat,\pst\in\mathcal{P}([-M,M])$, we also have
		$W_1(\pihat,\pst)\xrightarrowP{} 0$. This observation will be useful for proving Hellinger and regret convergence. 
		
		\medskip 
		
		We next prove convergence in average squared Hellinger. To wit, note that the map $\theta\mapsto\phi(x-\,\cdot\,;\sigma)$ is uniformly Lipschitz
		over $x\in\R$ and $\sigma\in[c,C]$.
		Therefore, for any fixed $K > 0$,
		\[
		\frac{1}{n}\sum_{i=1}^{n}
		\sup_{|x|\leq K}
		\Bigl|
		\int\phi(x-\theta;\,\sigma_i)\,d(\pihat - \pst)(\theta)
		\Bigr|
		\;\lesssim 
		\,W_1(\pihat,\pst)
		\;\xrightarrowP{}\; 0.
		\]
		On the other hand,
		\begin{align*}
			&\frac{1}{n}\sum_{i=1}^{n}\int_{|x|>K}
			\int\phi(x-\theta;\,\sigma_i)\,d\pihat(\theta)\,dx
			+
			\frac{1}{n}\sum_{i=1}^{n}\int_{|x|>K}
			\int\phi(x-\theta;\,\sigma_i)\,d\pst(\theta)\,dx\\
			&\leq\;
			\frac{2}{n}\sum_{i=1}^{n}
			\PP\!\Bigl(|Z|>\tfrac{K-M}{\sigma_i}\Bigr)
			\;\longrightarrow\; 0
		\end{align*}
		as $n\to\infty$ followed by $K\to\infty$.  This implies
		\[
		\frac{1}{n}\sum_{i=1}^{n}
		\TV\!\Bigl(
		\textstyle\int\phi(\,\cdot\,-\theta;\,\sigma_i)\,d\pihat(\theta),\;
		\textstyle\int\phi(\,\cdot\,-\theta;\,\sigma_i)\,d\pst(\theta)
		\Bigr)
		\;\xrightarrowP{}\; 0.
		\]
		As squared Hellinger distance is smaller than total variation distance, we get:
		\[
		\frac{1}{n}\sum_{i=1}^{n}
		\mathrm{Hel}^2\!\Bigl(
		\textstyle\int\phi(\,\cdot\,-\theta;\,\sigma_i)\,d\pihat(\theta),\;
		\textstyle\int\phi(\,\cdot\,-\theta;\,\sigma_i)\,d\pst(\theta)
		\Bigr)
		\;\xrightarrowP{}\; 0.
		\]
		
		\medskip 
		
		Now we move on to the proof of the regret bound.
		Recall the definition of $h_\pi(x;\sigma)$ from \eqref{eq:postmean} and note that by
		\eqref{eq:lipschitzmean} the map $x\mapsto h_\pi(x;\sigma)$ is uniformly Lipschitz
		for all $\sigma > 0$.
		Next recall the definition of $\mathrm{Regret}_n$ from \eqref{eq:regret_eq}.
		Conditioned on $\theta_i$, we couple $(X_i, Z_i)$ using the optimal
		$W_2$-distance coupling.  Using the above observations, we get that
		\begin{align}\label{eq:regretdecomp}
			&\;\;\;\;\;|\mathrm{Regret}_n| \nonumber 
			\\ &=
			\left|
			\sqrt{\frac{1}{n}\sum_{i=1}^{n}\E \bigl(\theta_i - h_{\pihat}(X_i;\hat{\sigma}_i)\bigr)^2}
			\;-\;
			\sqrt{\frac{1}{n}\sum_{i=1}^{n}\E \bigl(\theta_i - h_{\pst}(Z_i;\sigma_i)\bigr)^2}
			\right| \nonumber \\
			&\leq
			\sqrt{\frac{1}{n}\sum_{i=1}^{n}\E \bigl(h_{\pihat}(X_i;\hat{\sigma}_i) - h_{\pihat}(X_i;\sigma_i)\bigr)^2} + \sqrt{\frac{1}{n}\sum_{i=1}^{n}\E \bigl(h_{\pihat}(X_i;\sigma_i) - h_{\pst}(X_i;\sigma_i)\bigr)^2}
			\nonumber \\ &\qquad +
			\sqrt{\frac{1}{n}\sum_{i=1}^{n}\E\,W_2^2\!\bigl(\mathrm{Law}(X_i\mid\theta_i),\mathrm{Law}(Z_i\mid\theta_i)\bigr)}.
		\end{align}
		As $\mathcal{W}_n\to 0$, the third term converges to $0$ as $n\to\infty$. 
		Let us now bound the first term. Note that by \cref{asn:taildef}, we get:
		\begin{align*}
			&\limsup_{T\to\infty}\,\limsup_{n\to\infty}
			\frac{1}{n}\sum_{i=1}^{n}
			\E\!\Bigl[
			\bigl(h_{\pihat}(X_i;\hat{\sigma}_i) - h_{\pihat}(X_i;\sigma_i)\bigr)^2
			\mathbf{1}(|X_i|>T)
			\Bigr]\\
			&\leq
			4M^2\,
			\limsup_{T\to\infty}\,\limsup_{n\to\infty}
			\frac{1}{n}\sum_{i=1}^{n}
			\PP(|X_i|>T)
			\;=\; 0.
		\end{align*}
		Observe that for fixed $T > 0$, there exists a constant $c_T > 0$ such that
		\begin{align}\label{eq:denlbd}
			\inf_{|x|\leq T}\;
			\inf_{\pi\in\mathcal{P}([-M,M])}\;
			\inf_{\sigma\in[c,C]}\;
			\int\phi(x-\theta;\,\sigma)\,d\pi(\theta)
			\;\geq\; c_T.
		\end{align}
		By using the above observation, we get that 
		\begin{align*}
			\sup_{|x|\le T} |h_{\pihat}(x;\hat{\sigma}_i)-h_{\pihat}(x;\sigma_i)| &\le \frac{1}{c_T}\sup_{|x|\le T}\bigg|\int \theta \left(\phi\left(\frac{x-\theta}{\hat{\sigma}_i}\right)-\phi\left(\frac{x-\theta}{\sigma_i}\right)\right)\,d\pihat(\theta)\bigg| \\ & \qquad +\frac{M}{c_T^2 \sqrt{2\pi}\sigma_i}\sup_{|x|\le T}\bigg|\int  \left(\phi\left(\frac{x-\theta}{\hat{\sigma}_i}\right)-\phi\left(\frac{x-\theta}{\sigma_i}\right)\right)\,d\pihat(\theta)\bigg|
		\end{align*}
		By using \eqref{eq:denlbd}, we then have: 
		$$\sup_{|x|\le T} |h_{\pihat}(x;\hat{\sigma}_i)-h_{\pihat}(x;\sigma_i)| \lesssim |\hat{\sigma}_i-\sigma_i|$$
		where the implied constant depends on $T$. As a result, the following conclusion holds: 
		\begin{align*}
			\frac{1}{n}\sum_{i=1}^{n}\E \bigl(h_{\pihat}(X_i;\hat{\sigma}_i) - h_{\pihat}(X_i;\sigma_i)\bigr)^2\mathbf{1}(|X_i|\le T) \lesssim \E\left[\frac{1}{n}\sum_{i=1}^n (\hat{\sigma}_i-\sigma_i)^2\right] \lesssim \E\left[\frac{1}{n}\sum_{i=1}^n |\hat{\sigma}_i-\sigma_i|\right]\to 0, 
		\end{align*}
		where the conclusion follows by using \cref{asn:appopt}.
		
		\noindent Next we bound the second term in \eqref{eq:regretdecomp}. Note that by \cref{asn:taildef}, we once again get:
		\begin{align*}
			&\limsup_{T\to\infty}\,\limsup_{n\to\infty}
			\frac{1}{n}\sum_{i=1}^{n}
			\E\!\Bigl[
			\bigl(h_{\pihat}(X_i;\sigma_i) - h_{\pst}(X_i;\sigma_i)\bigr)^2
			\mathbf{1}(|X_i|>T)
			\Bigr]\\
			&\leq
			4M^2\,
			\limsup_{T\to\infty}\,\limsup_{n\to\infty}
			\frac{1}{n}\sum_{i=1}^{n}
			\PP(|X_i|>T)
			\;=\; 0.
		\end{align*}
		
		\noindent Moreover, both maps
		\[
		\theta \mapsto \theta\,\phi(x-\theta;\sigma)
		\qquad\text{and}\qquad
		\theta \mapsto \phi(x-\theta;\sigma)
		\]
		are uniformly Lipschitz over $|\theta|\leq M$, $\sigma\in[c,C]$, and all
		$x\in\R$.
		This implies that for given $T > 0$, there exists a constant $C_T > 0$ such that
		\[
		\frac{1}{n}\sum_{i=1}^{n}
		\E\!\Bigl[
		\bigl(h_{\pihat}(X_i;\sigma_i) - h_{\pst}(X_i;\sigma_i)\bigr)^2
		\mathbf{1}(|X_i|\leq T)
		\Bigr]
		\;\leq\;
		C_T\,\E\,W_1^2(\pihat,\pst)
		\;\longrightarrow\; 0.
		\]
		Combining the above observation with \eqref{eq:denlbd}, it follows that the second term in \eqref{eq:regretdecomp} converges to $0$. This completes the proof.
	\end{proof}
	
	\section{Proof of Applications}\label{sec:pfapp}
	
	We will directly prove \cref{thm:moment_adapt}, as \cref{thm:main_rates} follows
	from it by choosing $k = 2$. We begin by stating and proving a preparatory lemma on bounds for the derivatives of the log-marginal density $\log f_{\pi,\sigma}$. 
	
	\begin{lemma}[Derivative and cumulant bounds]\label{lem:deriv-cumulant-bound}
		Let $\pi \in \cP([-M,M])$ and $\sigma > 0$. For each $x \in \R$, let $\kappa_k(\theta \mid X = x)$ denote the $k$-th cumulant of the posterior distribution of~$\theta$ given $X = x$.
		\begin{enumerate}[(a)]
			\item For all $x \in \R$ and $k \geq 1$,
			\[
			\frac{d^k}{dx^k}\log f_{\pi,\sigma}(x) = \begin{cases}
				\displaystyle\frac{\E[\theta \mid X = x] - x}{\sigma^2} & \text{if } k = 1,\\[8pt]
				\displaystyle\frac{\Var(\theta \mid X = x) - \sigma^2}{\sigma^4} & \text{if } k = 2,\\[8pt]
				\displaystyle\frac{\kappa_k(\theta \mid X = x)}{\sigma^{2k}} & \text{if } k \geq 3.
			\end{cases}
			\]
			\item For $k \geq 2$, the derivatives are uniformly bounded in~$x$. For $k = 2$,
			\[
			\sup_{x \in \R}\left|\frac{d^2}{dx^2}\log f_{\pi,\sigma}(x)\right| \leq \frac{M^2}{\sigma^4} + \frac{1}{\sigma^2},
			\]
			and for $k \geq 3$,
			\[
			\sup_{x \in \R}\left|\frac{d^k}{dx^k}\log f_{\pi,\sigma}(x)\right| \leq \frac{k!}{2}\left(\frac{2M}{\sigma^2}\right)^k.
			\]
		\end{enumerate}
	\end{lemma}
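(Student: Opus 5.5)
The plan is to write $\log f_{\pi,\sigma}$ as the log of an exponential-family normalizer and read off derivatives as posterior cumulants. Expanding the Gaussian kernel gives
\[
f_{\pi,\sigma}(x) = \frac{1}{\sqrt{2\pi}\sigma}\, e^{-x^2/(2\sigma^2)} \int e^{\theta x/\sigma^2}\, e^{-\theta^2/(2\sigma^2)}\, d\pi(\theta).
\]
Setting $s := x/\sigma^2$ and $\Lambda(s) := \log \int e^{s\theta}\, d\tilde\pi(\theta)$, where $d\tilde\pi(\theta) \propto e^{-\theta^2/(2\sigma^2)} d\pi(\theta)$, we obtain
\[
\log f_{\pi,\sigma}(x) = \mathrm{const} - \frac{x^2}{2\sigma^2} + \Lambda(x/\sigma^2).
\]
Because $\tilde\pi$ is compactly supported, $\Lambda$ is real-analytic, so differentiation under the integral is justified. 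Its $k$-th derivative at $s$ equals the $k$-th cumulant of the exponentially tilted law $e^{s\theta - \Lambda(s)} d\tilde\pi(\theta)$. At $s = x/\sigma^2$ this tilted law is exactly the posterior of $\theta$ given $X = x$.

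By the chain rule,
\[
\frac{d^k}{dx^k}\Lambda(x/\sigma^2) = \sigma^{-2k}\,\kappa_k(\theta \mid X = x).
\]
The quadratic term contributes $-x/\sigma^2$ at $k = 1$, contributes $-1/\sigma^2$ at $k = 2$, and vanishes for $k \ge 3$. Combining these yields the three cases of part (a): $(\E[\theta \mid x] - x)/\sigma^2$ for $k = 1$, $\Var(\theta \mid x)/\sigma^4 - 1/\sigma^2$ for $k = 2$, which matches the stated form, and $\kappa_k/\sigma^{2k}$ for $k \ge 3$.

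For part (b) with $k = 2$, note that $0 \le \Var(\theta \mid x) \le M^2$ because the posterior is supported on $[-M,M]$. The triangle inequality then gives the bound $M^2/\sigma^4 + 1/\sigma^2$.

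For $k \ge 3$ we need a uniform bound on the cumulants of any law supported on $[-M,M]$. The plan is to center at the mean and bound the centered variable $Y$ by $|Y| \le 2M$, so that every centered moment satisfies $|\E Y^j| \le (2M)^j$. We then express cumulants through centered moments by the moment–cumulant (set-partition) formula,
\[
\kappa_k = \sum_{\text{partitions } P} (|P| - 1)!\,(-1)^{|P|-1} \prod_{B \in P} \mu_{|B|}.
\]
Each product in this sum is at most $(2M)^k$ in absolute value.

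This is the step I expect to be the main obstacle. The crude count $\sum_P (|P| - 1)!$ is the ordered Bell number, which grows like $k!/(2(\log 2)^{k+1})$ and is larger than the target $\tfrac{k!}{2}$. So the partition argument needs refinement. One option is to use that blocks of size 1 contribute zero, since the centered first moment vanishes, and restrict the sum to partitions without singletons. Alternatively, I would first try the analytic route. The cumulant generating function $K(t) = \log \E e^{tY}$ of a variable with $|Y| \le 2M$ is analytic on the disc $|t| < r$ for a suitable radius. There we can bound $|K(t)|$ via $|\E e^{tY} - 1| \le e^{2M|t|} - 1 - 2M|t|$, using $\E Y = 0$. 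Cauchy's estimate on a circle of radius $\approx \log 2/(2M)$, or a direct majorant comparison of the power series of $\log(1 + u)$ with $u$ majorized as above, should then give $|\kappa_k| \le \tfrac{k!}{2}(2M)^k$. If the constant turns out slightly off, the majorant series comparison is what I would tune to recover exactly $\tfrac{k!}{2}$.

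Dividing the resulting cumulant bound by $\sigma^{2k}$ gives the stated bound $\tfrac{k!}{2}(2M/\sigma^2)^k$, uniformly in $x$.
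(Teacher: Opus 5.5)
Your argument for part (a) and for the $k=2$ bound in part (b) is the same as the paper's. You express $\log f_{\pi,\sigma}$ through the cumulant generating function of the tilted prior, identify the tilted law at $x/\sigma^2$ with the posterior, apply the chain rule, and use $0\le\Var(\theta\mid X=x)\le M^2$.

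The gap is the case $k\ge3$, which is the only non-routine content of the lemma. You never prove $|\kappa_k|\le\tfrac{k!}{2}(2M)^k$; you only list routes that ``should'' give it, and the analytic route fails as stated. Take your bound $|K(t)|\le-\log\bigl(1-U(|t|)\bigr)$ with $U(r):=e^{2Mr}-1-2Mr$. Cauchy's estimate on $|t|=s/(2M)$ then gives $|\kappa_k|\le k!\,(2M)^k\,s^{-k}\bigl(-\log(2+s-e^s)\bigr)$ for $0<s<t_0$, where $t_0\approx1.146$ solves $e^{t_0}=2+t_0$. At your suggested radius $s=\log 2$ this is about $0.37\,(\log 2)^{-k}\,k!\,(2M)^k$, exponentially larger than the target. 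Even the optimal $s$ gives constants of roughly $1.08$, $1.24$ and $0.9$ for $k=3,4,10$. So no sup-norm Cauchy bound delivers the constant $\tfrac12$ for small and moderate $k$. The coefficientwise comparison is the proof, not a tuning step, and it has to be carried out.

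It does go through. Majorize $\log(1+u)=\sum_{n\ge1}(-1)^{n-1}u^n/n$ coefficientwise by $\sum_{n\ge1}U^n/n$. Equivalently, keep only singleton-free partitions in your moment--cumulant sum, which is allowed since $\E Y=0$. This gives $|\kappa_k|\le k!\,(2M)^k c_k$ with $c_k:=[t^k]\sum_{n\ge1}(e^t-1-t)^n/n=[t^k]\bigl(-\log(2+t-e^t)\bigr)$.

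All $c_k$ are nonnegative, and by Tonelli $\sum_{k\ge2}c_k=\sum_{n\ge1}(e-2)^n/n=-\log(3-e)\approx1.267$. Since $c_2=\tfrac12$ and $c_3=c_4=\tfrac16$, every $c_k$ with $k\ge5$ is at most $-\log(3-e)-\tfrac56\approx0.434<\tfrac12$. This proves the bound for all $k\ge2$.

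The paper avoids generating functions and instead inducts on the recursion $\kappa_k=m_k-\sum_{i=1}^{k-1}\binom{k-1}{i}\kappa_{k-i}m_i$ for the centered moments $m_j$. The terms $i=1$ and $i=k-1$ vanish because $m_1=\kappa_1=0$. It then uses $\sum_{i=2}^{k-2}\binom{k-1}{i}(k-i)!=(k-1)!\sum_{i=2}^{k-2}\frac{k-i}{i!}\le(e-2)\,k!$ to get $|\kappa_k|\le(2M)^k\bigl(1+\tfrac{(e-2)k!}{2}\bigr)\le\tfrac{k!}{2}(2M)^k$ for $k\ge4$, with $\kappa_2=m_2$ and $\kappa_3=m_3$ as base cases.

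The induction is quicker to verify. The generating-function bound is sharper: $-\log(2+t-e^t)$ has radius of convergence $t_0>1$, so the $c_k$ actually decay geometrically.
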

	
	\begin{proof}
		Write $\phi_\sigma$ for the $\cN(0, \sigma^2)$ density (with $\phi$ denoting the standard normal with a slight notational abuse). Factoring the Gaussian kernel,
		\[
		\phi_\sigma(x - \theta) = \frac{1}{\sqrt{2\pi}\,\sigma}\exp\!\left(-\frac{x^2}{2\sigma^2}\right)\exp\!\left(\frac{x\theta}{\sigma^2}\right)\exp\!\left(-\frac{\theta^2}{2\sigma^2}\right),
		\]
		so that $f_{\pi,\sigma}(x) = \frac{1}{\sqrt{2\pi}\,\sigma}\,e^{-x^2/(2\sigma^2)}\,\widetilde{M}(x/\sigma^2)$, where $\widetilde{M}(t) := \int e^{t\theta}\,d\tilde{\pi}(\theta)$. Here $\widetilde{M}(t)$ can be viewed as the moment-generating function of the tilted measure $d\tilde{\pi}(\theta) \propto e^{-\theta^2/(2\sigma^2)}\,d\pi(\theta)$ up to some normalizing constant (which does not depend on $x$). Taking logarithms,
		\begin{equation}\label{eq:log-fpi-cgf}
			\log f_{\pi,\sigma}(x) = -\frac{x^2}{2\sigma^2} + \Lambda(x/\sigma^2) + C_0,
		\end{equation}
		where $\Lambda(t)$ is the cumulant-generating function of~$\tilde{\pi}$ and $C_0$ is a constant free of $x$.
		
		The exponentially tilted measure $e^{t\theta-\Lambda(t)}\,d\tilde{\pi}(\theta)$ evaluated at $t = x/\sigma^2$ is precisely the posterior $\pi(\theta \mid X = x)$, so $\Lambda^{(k)}(x/\sigma^2) = \kappa_k(\theta \mid X = x)$. Differentiating~\eqref{eq:log-fpi-cgf} $k$ times with respect to~$x$: the quadratic term $-x^2/(2\sigma^2)$ contributes $-x/\sigma^2$ when $k = 1$, $-1/\sigma^2$ when $k = 2$, and zero when $k \geq 3$, while the chain rule gives $\frac{d^k}{dx^k}\Lambda(x/\sigma^2) = \sigma^{-2k}\,\kappa_k(\theta \mid X = x)$. This yields part~(a).
		
		For part~(b), the posterior $\theta \mid X = x$ is supported on $[-M, M]$ for every~$x$. Let $Y := \theta - \E[\theta \mid X = x]$ denote the centered posterior variable, so $|Y| \leq 2M$, $\E[Y] = 0$, and $\kappa_j(Y) = \kappa_j(\theta \mid X = x)$ for $j \geq 2$. Write $m_j := \E[Y^j \mid X = x]$, and note $m_0 = 1$, $m_1 = 0$, and $|m_j| \leq (2M)^j$ for all $j \geq 0$.
		
		We claim $|\kappa_k| \leq \frac{k!}{2}\,(2M)^k$ for all $k \geq 2$, proved by induction using the recursive moment--cumulant relation \citet[Eq.~6]{Smith1995}:
		\begin{equation}\label{eq:moment-cumulant}
			\kappa_k = m_k - \sum_{i=1}^{k-1}\binom{k-1}{i}\,\kappa_{k-i}\,m_i.
		\end{equation}
		For the base case $k = 2$: $\kappa_2 = m_2$, so $|\kappa_2| \leq (2M)^2 = \frac{2!}{2}\,(2M)^2$. In fact we also note that $\kappa_3=m_3$, so $|\kappa_3|\le (2M)^3\le \frac{3!}{2}(2M)^3$. 
		
		For the inductive step, assume $|\kappa_j| \leq \frac{j!}{2}\,(2M)^j$ for $3 \leq j \leq k - 1$. Since $m_1 = 0$ and $\kappa_1 = 0$, the terms $i = k - 1$ and $i = 1$ in~\eqref{eq:moment-cumulant} vanish, giving
		\begin{align*}
			|\kappa_k| &\leq |m_k| + \sum_{i=2}^{k-2}\binom{k-1}{i} \;|\kappa_{k-i}|\;|m_i| \\ &\leq (2M)^k + \sum_{i=2}^{k-2}\binom{k-1}{i}\,\frac{(k-i)!}{2}\,(2M)^{k-i}\cdot (2M)^i \\ &= (2M)^k\!\left(1 + \frac{1}{2}\sum_{i=2}^{n-2}\binom{n-1}{i}(n-i)!\right).
		\end{align*}
		To complete the induction, we need $1 + \frac{1}{2}\sum_{i=2}^{k-2}\binom{k-1}{i}(k-i)! \leq \frac{k!}{2}$. Simplifying the binomial coefficient:
		\[
		\sum_{i=2}^{k-2}\binom{k-1}{i}(k-i)! = (k-1)!\sum_{i=2}^{k-2}\frac{k-i}{i!} \leq (k-1)!\,k\sum_{i=2}^{\infty}\frac{1}{i!} = k!\,(e - 2).
		\]
		For $k \geq 4$, the bound gives $1 + \frac{k!(e-2)}{2}$, and this is at most $\frac{k!}{2}$ provided $1 \leq \frac{k!(3-e)}{2}$, which holds since $\frac{4!(3-e)}{2} > 3$. Hence $|\kappa_k| \leq \frac{k!}{2}\,(2M)^k$ for all $k \geq 2$.
		
		For the full derivative bound when $k \geq 3$, part~(a) gives $\left|\frac{d^k}{dx^k}\log f_{\pi,\sigma}(x)\right| = |\kappa_k|/\sigma^{2k} \leq \frac{k!}{2}(2M/\sigma^2)^k$. The bound for $k=2$ is immediate from part (a).
	\end{proof}

	\begin{proof}[Proof of \cref{thm:moment_adapt}]
		By invoking \cref{thm:main} and \cref{thm:denoising-normal}, it suffices to bound
		$\rbar{1}$, $\rbar{2}$, and $\mathcal{W}_n$. Recall that
		$\rbar{1} = n^{-1}\sum_{i=1}^{n} r_{1,i}$. We will show that each
		$r_{1,i}$ can be chosen as
		\begin{equation}\label{eq:r1show}
			r_{1,i} \asymp \frac{1}{J_i^{k-1}}\log^4n+\frac{1}{n}.
		\end{equation}
		To wit, define the function
		\begin{equation*}
			t_{\pi,i}(x) := \log\frac{f_{\pi,\sigma_i}(x)}{f_{\pst,\sigma_i}(x)}.
		\end{equation*}
		Let $W_i \sim f_{\pst,\sigma_i}$. Observe that
		\begin{equation}\label{eq:negterm}
			\E\,t_{\pi,i}(W_i)
			= \int f_{\pst,\sigma_i}(x)\log\frac{f_{\pi,\sigma_i}(x)}{f_{\pst,\sigma_i}(x)}\,dx
			\le -\Hel^2\!\bigl(f_{\pi,\sigma_i},\, f_{\pst,\sigma_i}\bigr),
		\end{equation}
		which follows by invoking \cref{lem:reweighted-KL-hellinger} with
		$q = f_{\pst,\sigma_i} = p_2$ and $p_1 = f_{\pi,\sigma_i}$. For the remainder
		of the proof, we will focus on bounding
		\begin{equation*}
			\bigl|\E_{X_i \sim \mu_i}\,t_{\pi,i}(X_i) - \E\,t_{\pi,i}(W_i)\bigr|.
		\end{equation*}
		As all subsequent analysis will be uniform over $1 \le i \le n$, let us drop $i$
		from our notation for simplicity. That is, we replace
		\begin{equation*}
			t_\pi \leftarrow t_{\pi,i},\quad
			\mu \leftarrow \mu_i,\quad
			\theta \leftarrow \theta_i,\quad
			J \leftarrow J_i,\quad
			\varepsilon_j \leftarrow \varepsilon_{ij}.
		\end{equation*}
		Without loss of generality, we set $\sigma_i \equiv 1$ and define
		$\tilde{\varepsilon}_j := \varepsilon_j/\sqrt{J}$. We draw a set of samples
		$V_1, V_2, \ldots, V_J \overset{iid}{\sim} N(0,1)$ which are independent of both $\theta$ and $\tilde{\varepsilon}_1, \ldots, \tilde{\varepsilon}_J$. Let us further define
		\begin{equation*}
			Y_j := J^{-1/2}\!\left(\sum_{\ell < j}\tilde{\varepsilon}_\ell
			+ \sum_{\ell > j} V_\ell\right).
		\end{equation*}
		Let $X \sim \mu$ and $W \sim f_{\pst,1}$. Observe that
		\begin{equation*}
			\E\,t_\pi(X) - \E\,t_\pi(W)
			= \sum_{j=1}^{J}
			\E\!\left[t_\pi\!\left(\theta + Y_j + J^{-1/2}\tilde{\varepsilon}_j\right)
			- t_\pi\!\left(\theta + Y_j + J^{-1/2}V_j\right)\right].
		\end{equation*}
		We bound the right-hand side by carrying out a Taylor series expansion around
		$\theta + Y_j$ up to the $2k$-th order. To bound the remainder terms, we use
		the fact that all derivatives of $t_\pi$ are uniformly bounded via
		\cref{lem:deriv-cumulant-bound}. Observe that
		\begin{equation*}
			\E\,t_\pi\!\left(\theta + Y_j + J^{-1/2}\tilde{\varepsilon}_j\right)
			= \sum_{p=0}^{2k-1}\left(\frac{1}{\sqrt{J}}\right)^{\!p}
			\E\left[\E(\tilde{\varepsilon}_j^p|\theta)\cdot\frac{1}{p!}
			\E\!\left[t_\pi^{(p)}(\theta + Y_j)|\theta\right]\right]
			+ O\!\left(\frac{1}{J^k}\right).
		\end{equation*}
		In the above display we have used the independence of $\theta$,
		$\tilde{\varepsilon}_j$, and $Y_j$. Carrying out the same expansion for the
		other term gives
		\begin{equation*}
			\E\,t_\pi\!\left(\theta + Y_j + J^{-1/2}V_j\right)
			= \sum_{p=0}^{2k-1}\left(\frac{1}{\sqrt{J}}\right)^{\!p}
			\E(V_j^p)\cdot\frac{1}{p!}
			\E\!\left[t_\pi^{(p)}(\theta + Y_j)\right]
			+ O\!\left(\frac{1}{J^k}\right).
		\end{equation*}
		Since the first $k$ moments of $V_j$ match those of $\tilde{\varepsilon}_j$ given $\theta$,
		we obtain
		\begin{equation*}
			\E\,t_\pi(X) - \E\,t_\pi(W)
			= \sum_{j=1}^{J}\sum_{p=k+1}^{2k-1}\left(\frac{1}{\sqrt{J}}\right)^{\!p}
			\!\E\left[\bigl(\E(\tilde{\varepsilon}_j^p|\theta)-\E(V_j^p)\bigr)
			\frac{1}{p!}
			\E\!\left[t_\pi^{(p)}(\theta+Y_j)|\theta\right]\right]
			+ O\!\left(\frac{1}{J^{k-1}}\right).
		\end{equation*}
		Now fix $p$ and some $j$. We will repeat the same Lindeberg replacement method
		as above; in particular, we replace $\varepsilon_1, \varepsilon_2, \ldots,
		\varepsilon_{j-1}$ with $V_1, V_2, \ldots, V_{j-1}$ one by one. On this
		occasion, we carry out a Taylor expansion of order $k+1$. Define $\tilde{Y}_j:=J^{-1/2}\sum_{i\neq j} V_i$. Using again the
		uniform boundedness of the derivatives of $t_\pi$, we obtain
		\begin{equation*}
			\bigl|\E\,\big[t_\pi^{(p)}(\theta + Y_j)|\theta\big] - \E\,\big[t_\pi^{(p)}(\theta+\tilde{Y}_j)|\theta\big]\bigr|
			\lesssim \frac{j}{J^{(k+1)/2}} \le \frac{1}{J^{(k-1)/2}},
		\end{equation*}
		where we have used the fact that the first $k$ moments of $\tilde{\varepsilon}_j$
		match those of $V_j$ given $\theta$. Combining the above observations, and writing $Z\sim N(0,1)$, we get
		\begin{align*}
			&\;\;\;\;\;\E\,t_\pi(X) - \E\,t_\pi(W)
			\\ &= \sum_{j=1}^{J}\sum_{p=k+1}^{2k-1}
			\left(\frac{1}{\sqrt{J}}\right)^{\!p}
			\!\E\left[\bigl(\E(\tilde{\varepsilon}_j^p|\theta)-\E(V_j^p)\bigr)
			\cdot\frac{1}{p!}\,\E\,\big[t_\pi^{(p)}(\theta+\tilde{Y}_j)\big|\theta\big]\right]
			+ O\!\left(\frac{1}{J^{k-1}}\right).
		\end{align*}
		Using the above representation, we first claim that the proof follows if we show that 
		\begin{align}\label{eq:newclaim}
			\E|\E[t_{\pi}^{(p)}(\theta+\tilde{Y}_j)|\theta]|\lesssim g_{T}\big(\Helsq(f_{\pi,1},f_{\pst,1})\big),
		\end{align}
		where the implied constant doesn't depend on $j$ but does depend on $p$. 
		To see why, note that the uniform subGaussianity of $\tilde{\varepsilon}_j$ conditioned on $\theta$ yields 
		\begin{align*}   \E\bigg|\left[\bigl(\E(\tilde{\varepsilon}_j^p|\theta)-\E(V_j^p)\bigr)
			\cdot\,\E\,\big[t_\pi^{(p)}(\theta+\tilde{Y}_j)\big|\theta\big]\right]\bigg|\lesssim \E|\E[t_{\pi}(\theta+\tilde{Y}_j)|\theta]|\lesssim g_{T}\big(\Helsq(f_{\pi,1},f_{\pst,1})\big).
		\end{align*} 
		
		Combining the above observations we get that 
		\begin{align*}
			|\E t_{\pi}(X)-\E t_{\pi}(W)|\lesssim \frac{1}{J^{(k-1)/2}}\sqrt{g_{T}\big(\Helsq(f_{\pi,1},f_{\pst,1})\big)}+O\left(\frac{1}{J^{k-1}}\right).
		\end{align*}
		Observe that if $\Hel(f_{\pi,1},f_{\pst,1})\le n^{-2}$, then 
		$$\frac{1}{J^{(k-1)/2}}\sqrt{g_{T}\big(\Helsq(f_{\pi,1},f_{\pst,1})\big)}\lesssim \frac{1}{n}.$$
		On the other hand if $\Hel(f_{\pi,1},f_{\pst,1})\ge n^{-2}$, then 
		$$\frac{1}{J^{(k-1)/2}}\sqrt{g_{T}\big(\Helsq(f_{\pi,1},f_{\pst,1})\big)}\le \frac{2\log{(Tn)}}{J^{(k-1)/2}}\Hel(f_{\pi,1},f_{\pst,1})\le \frac{\log^2{n}}{\eta^2 J^{k-1}}+\eta^2 \Helsq(f_{\pi,1},f_{\pst,1}),$$
		for some small constant $\eta<1$. Combining the above observations and making $\eta<1$ smaller if necessary we get 
		\begin{align*}
			|\E t_{\pi}(X)-\E t_{\pi}(W)|\le C_{\tilde{\eta}}\left(\frac{1}{n}+\frac{\log^2{n}}{J^{k-1}}\right) + \tilde{\eta}^2 \Helsq(f_{\pi,1},f_{\pst,1}), 
		\end{align*}
		for some $\tilde{\eta}<1$. Combining the above observation with \eqref{eq:negterm}, yields the conclusion in \eqref{eq:r1show}. Therefore we only need to prove \eqref{eq:newclaim}. 
		
		To wit, let $H_p(\cdot)$ denote the probabilists'
		Hermite polynomial and suppose $Z\sim N(0,1)$ independent of $\theta$. Then by applying Stein's identity, we have
		\begin{align}\label{eq:Steinapp}
			\E[\,t_\pi^{(p)}(\theta + \tilde{Y}_j)|\theta]
			& = \left(\frac{J}{J-1}\right)^{p/2}\E\,[t_\pi(\theta + \sqrt{1-J^{-1}}Z)\,H_p(Z)|\theta] \nonumber \\ &\le 2^{p/2}\sqrt{\E\,[t_\pi^2(\theta + \sqrt{1-J^{-1}}Z)|\theta]}\;\sqrt{\E\,H_p^2(Z)}.
		\end{align}
		Next note that there exists a constant $C_p$ such that $\E H_p^2(Z)\le C_p$. Also by directly comparing Gaussian densities, we have 
		\begin{align*}
			\E\,[t_\pi^2(\theta + \sqrt{1-J^{-1}}Z)|\theta] \le \sqrt{2} \E\,[t_\pi^2(\theta + Z)|\theta].    
		\end{align*}
		Therefore, by \cref{lem:Vk-hellinger-bound}, we have
		\begin{align*}
			\E\big|\E[\,t_\pi^{(p)}(\theta + \tilde{Y}_j)|\theta]\big|\lesssim \sqrt{\E t_{\pi}^2(\theta+Z)}\lesssim \sqrt{g_T(\Helsq(f_{\pi,1},f_{\pst,1}))}. 
		\end{align*}
		This establishes \eqref{eq:newclaim} and hence completes the proof.
		
		\medskip 
		
		\noindent \emph{Bound on $\rbar{2}$}. The proof proceeds similarly. The only difference is that $t_{\pi}^2(\cdot)$ does not have uniformly bounded derivatives. However by \eqref{eq:ratio-growth}, $t_{\pi}(x)$ grows at most linearly in $|x|$. This implies that the $(2k)$-th derivative of $t_{\pi}^2$ grows at most linearly in $|x|$ as well. As $X_i$s generated from \eqref{eq:xi_avg_rev} are uniformly subgaussian, the same error bounds in the control of $\rbar{1}$ continue to hold. Another change would be in the Stein's identity step \eqref{eq:Steinapp} where we will now have
		\begin{align*}
			\E\big|\E\,[(t_\pi^2)^{(p)}(\theta + \tilde{Y}_j)]\big|
			& \lesssim  \sqrt{\E\,t_\pi^4(\theta + Z)}\;\sqrt{\E\,H_p^2(Z)}.
		\end{align*}
		We can then bound $\E t_{\pi}^4(\theta+Z)$ by using \cref{lem:Vk-hellinger-bound} to get 
		$$\E t_{\pi}^4(\theta+Z)\lesssim \Helsq(f_{\pi,1},f_{\pst,1})\log^4\big(\Helsq(f_{\pi,1},f_{\pst,1}\big).$$
		The rest of the calculation is exactly the same as the bound for $\rbar{1}$.
		
		\medskip 
		
		\noindent \emph{Bound on $\mathcal{W}_n$.} The bound on $\mathcal{W}_n$ follows from  \citet[Theorem 1.3]{bobkov2018berry}.
	\end{proof}
	
	\begin{proof}[Proof of \cref{cor:mvncons}]
		We only need to verify \cref{asn:taildef,asn:appopt}.  The only
		non-trivial condition to check is that for all
		$f_i : \mathbb{R} \to \mathbb{R}$ such that $\|f_i''\|_{\infty} \leq 1$
		for all $1 \leq i \leq n$, we have
		\begin{equation}
			\label{eq:lln}
			\frac{1}{n}\sum_{i=1}^{n}
			\bigl(f_i(X_i) - \E f_i(X_i)\bigr)
			\xrightarrow{p} 0.
		\end{equation}
		We will show that
		$\Var\!\left(n^{-1}\sum_{i=1}^{n} f_i(X_i)\right) \to 0$.
		The proof proceeds by using the law of total variance to get
		\begin{align}
			\Var\!\left(n^{-1}\sum_{i=1}^{n} f_i(X_i)\right)
			&= \E\,\Var\!\left(n^{-1}\sum_{i=1}^{n} f_i(X_i)
			\,\Big|\, \theta_1, \ldots, \theta_n\right) \nonumber\\
			&\quad +\,
			\Var\,\E\!\left[n^{-1}\sum_{i=1}^{n} f_i(X_i)
			\,\Big|\, \theta_1, \ldots, \theta_n\right].
			\label{eq:totvar}
		\end{align}
		For the first term, let us use the Gaussian Poincar\'{e}
		inequality~\cite[Theorem~3.20]{sgp2013} to get
		\begin{equation}
			\label{eq:poincare}
			\E\,\Var\!\left(n^{-1}\sum_{i=1}^{n} f_i(X_i)
			\,\Big|\, \theta_1, \ldots, \theta_n\right)
			\;\leq\;
			n^{-1}\|\Sigma_n\|_{\mathrm{op}}\,
			\E\!\left[\frac{1}{n}\sum_{i=1}^{n} f_i'(X_i)^2\right]
			\;\to\; 0
		\end{equation}
		as $n \to \infty$.  The last limit follows from the fact that
		$n^{-1}\|\Sigma_n\|_{\mathrm{op}} \to 0$ and
		\[
		\E\!\left[\frac{1}{n}\sum_{i=1}^{n} f_i'(X_i)^2\right]
		\;\leq\;
		\E\!\left[\frac{1}{n}\sum_{i=1}^{n}
		\bigl(|f_i'(0)| + \|f_i''\|_{\infty}|X_i|\bigr)^2\right]
		\;\lesssim\; 1,
		\]
		as $\max_{1\leq i \leq n}\sigma_i \lesssim 1$.
		
		For the second term in the variance decomposition, note that
		$\E[f_i(X_i)\mid\theta_1,\ldots,\theta_n]$ exactly equals
		$\E[f_i(X_i)\mid\theta_i]$, which is a measurable function of $\theta_i$.
		As the $\theta_i$'s are independent, we have
		\begin{align*}
			\Var\!\left(\E\!\left(n^{-1}\sum_{i=1}^{n} f_i(X_i)
			\,\Big|\, \theta_1, \ldots, \theta_n\right)\right)
			&= \frac{1}{n^2}\sum_{i=1}^{n}
			\Var\!\Bigl[\E\bigl(f_i(X_i) - f_i(0)\mid\theta_i\bigr)\Bigr]\\[4pt]
			&\leq \frac{1}{n^2}\sum_{i=1}^{n}
			\E\bigl(f_i(X_i) - f_i(0)\bigr)^2\\[4pt]
			&\leq \frac{1}{n^2}\sum_{i=1}^{n}
			\E\!\left(\bigl|f_i'(0)\bigr|\,|X_i|
			+ \tfrac{1}{2}\,\|f_i''\|_{\infty}\,X_i^2\right)^{\!2} = O(n^{-1}).
		\end{align*}
		This completes the proof.
	\end{proof}

	\section{Proof of Auxiliary Lemmas from \cref{sec:pfmain}}\label{sec:auxiliary-lemmas}
	
	In this Section we prove the auxiliary results that were deferred earlier in the paper.
	
	\begin{proof}[Proof of \cref{lem:log-mixture-bound}]
		By the triangle inequality, we have
		\[
		\left| \log \left( \frac{1}{\sigma} \int \phi\!\left(\frac{x-\theta}{\sigma}\right) d\pi(\theta) \right) \right| \leq \left| \log(\sqrt{2\pi}\sigma) \right| + \left| \log \int \exp\!\left(-\frac{1}{2\sigma^2}(x-\theta)^2\right) d\pi(\theta) \right|.
		\]
		Clearly, $\int \exp\!\left(-\frac{1}{2\sigma^2}(x-\theta)^2\right) d\pi(\theta) \leq 1$. Also note that for any $\theta \in [-M,M]$, we have
		\[
		\exp\!\left(-\frac{1}{2\sigma^2}(x-\theta)^2\right) \geq \exp\!\left(-\frac{1}{2\sigma^2}x^2 - \frac{1}{2\sigma^2}M^2 - \frac{1}{\sigma^2}|x\theta|\right) \geq \exp\!\left(-\frac{1}{2\sigma^2}(|x|+M)^2\right).
		\]
		This completes the proof.
	\end{proof}
	
	\begin{proof}[Proof of \cref{lem:sup-log-bound}]
		Arguing as in the proof of \cref{lem:log-mixture-bound}, for any $\theta \in [-M,M]$, we have
		\[
		\inf_{|x| \leq T} \exp\!\left(-\frac{1}{2\sigma^2}(x-\theta)^2\right) \geq \inf_{|x| \leq T} \exp\!\left(-\frac{1}{2\sigma^2}(|x|+M)^2\right) \geq \exp\!\left(-\frac{1}{\sigma^2}(T^2 + M^2)\right).
		\]
		The conclusion now follows using the inequality $\log(1+x) \leq x$ for $x \geq 0$.
	\end{proof}
	
	\begin{proof}[Proof of \cref{lem:hellinger-posteriors}]
		Let us write $U := \sqrt{f_\theta(x)}$, $V := \sqrt{g_\theta(x)}$, $P := \sqrt{m(x)}$, $Q := \sqrt{m_G(x)}$. Then
		\[
		\sqrt{p_f(\theta|x)} - \sqrt{p_g(\theta|x)} = \frac{\sqrt{\pi(\theta)}}{PQ}\bigl(UQ - VP\bigr).
		\]
		We then decompose $UQ - VP = Q(U - V) + V(Q - P)$ and apply $(a + b)^2 \leq 2a^2 + 2b^2$ to get
		\[
		\Helsq(p_f(\cdot|x), p_g(\cdot|x)) \leq \int \frac{\pi(\theta)}{P^2}\,(U - V)^2\,d\theta + \int \frac{\pi(\theta)\,V^2}{P^2 Q^2}\,(Q - P)^2\,d\theta.
		\]
		Let us multiply both sides of the above display by $m(x) = P^2$ and integrate over $x$.
		
		\medskip\noindent\emph{Term 1.} We obtain $\iint \pi(\theta)\,(\sqrt{f_\theta} - \sqrt{g_\theta})^2\,d\theta\,dx = 2\int \Helsq(f_\theta, g_\theta)\,d\pi(\theta)$.
		
		\medskip\noindent\emph{Term 2.} The factor $\int \pi(\theta)\,g_\theta(x)/m_G(x)\,d\theta = 1$ collapses the $\theta$-integral, which makes the second term simplify to $\int (\sqrt{m_G} - \sqrt{m})^2\,dx = 2\,\Helsq(m, m_G)$. By the Cauchy-Schwartz inequality, we then have
		\[
		\int \sqrt{m\,m_G}\,dx = \int \sqrt{\textstyle\int f_\theta\,d\pi \cdot \int g_\theta\,d\pi}\,dx \geq \iint \sqrt{f_\theta\,g_\theta}\,d\pi\,dx = 1 - \int \Helsq(f_\theta, g_\theta)\,d\pi(\theta),
		\]
		so $\Helsq(m, m_G) \leq \int \Helsq(f_\theta, g_\theta)\,d\pi(\theta)$. Combining the two terms completes the proof.
	\end{proof}
	
	\begin{proof}[Proof of \cref{lem:gT-subadditivity}]
		We compute
		\[
		g_T'(z) = \log^2(T/z) - 2\log(T/z), \qquad g_T''(z) = \frac{2\bigl(1 - \log(T/z)\bigr)}{z}.
		\]
		For $z \in (0, 2)$ and $T > 2e$, we have $T/z > e$, so $\log(T/z) > 1$, giving $g_T''(z) < 0$. Hence $g_T$ is strictly concave on $(0, 2)$.
		
		Since $g_T(0^+) = \lim_{z \to 0^+} z\log^2(T/z) = 0$, we extend $g_T$ continuously to $[0, 2)$ with $g_T(0) = 0$. For $0 < x, y < 1$, note that $0 < x + y < 2$. By concavity,
		\[
		g_T(x) = g_T\!\left(\frac{x}{x+y}(x+y) + \frac{y}{x+y}\cdot 0\right) \geq \frac{x}{x+y}\,g_T(x+y) + \frac{y}{x+y}\,g_T(0) = \frac{x}{x+y}\,g_T(x+y).
		\]
		The identical argument with $y$ in place of $x$ gives $g_T(y) \geq \frac{y}{x+y}\,g_T(x+y)$. Adding these two inequalities yields
		\[
		g_T(x) + g_T(y) \geq g_T(x+y). \qedhere
		\]
	\end{proof}
	
	\begin{proof}[Proof of \cref{lem:mixed-entropy-ineq}]
		Write $L_a := \log^2(T/a^2)$, $L_b := \log^2(T/b^2)$, and $S := a^2 L_a + b^2 L_b$. Since $0 < a, b < 1$ and $T > e^2$, we have $L_a, L_b \geq \log^2 T > 1$. Adding $b^2$ to both sides, the inequality is equivalent to
		\[
		\sqrt{2}\,a\sqrt{S} \leq 2a^2 L_a + \tfrac{1}{2}\,b^2.
		\]
		
		\medskip
		\noindent\emph{Case 1: $b^2 L_b \leq a^2 L_a$.} Then $S \leq 2a^2 L_a$, so
		\[
		\sqrt{2}\,a\sqrt{S} \leq 2a^2\sqrt{L_a} \leq 2a^2 L_a,
		\]
		where the last inequality uses $\sqrt{L_a} \leq L_a$, valid since $L_a > 1$.
		
		\medskip
		\noindent\emph{Case 2: $b^2 L_b > a^2 L_a$.} For $z\in [0,1]$, from the proof of \cref{lem:gT-subadditivity}, we observe that 
		$$g_T'(Z)=\big(\log{(T/z)}-1\big)^2-1>0$$
		for $T> e^2$. Therefore the function $g_T(\cdot)$ is increasing on $(0, 1)$. The assumption $b^2 L_b > a^2 L_a$ reads $g_T(b^2) > g_T(a^2)$, so $b > a$, and consequently $L_b \leq L_a$.
		
		From $S < 2b^2 L_b$ we obtain $\sqrt{2}\,a\sqrt{S} < 2ab\sqrt{L_b}$. An application of Young's inequality the yields
		\[
		2ab\sqrt{L_b} \leq 2a^2 L_b + \tfrac{1}{2}\,b^2 \leq 2a^2 L_a + \tfrac{1}{2}\,b^2,
		\]
		where the last inequality uses $L_b \leq L_a$. This completes the proof.
	\end{proof}
	
	We now present proofs of the information-theoretic Lemmas \ref{lem:Vk-hellinger-bound} --- \ref{lem:reweighted-KL-hellinger} which relate reweighted and unweighted Kullback-Leibler. This requires a preparatory result which is presented with proof below.
	
	\begin{lemma}\label{lem:hellinger-integral-bound}
		Given two Lebesgue densities $p_1$ and $p_2$ supported on $\R$, we have
		\[
		\int_{p_1 > 4p_2} p_1 \leq 8\,\Helsq(p_1, p_2).
		\]
	\end{lemma}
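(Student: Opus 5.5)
On the set $A := \{p_1 > 4p_2\}$ we have $\sqrt{p_2} < \tfrac12\sqrt{p_1}$. The plan is therefore to bound the integrand of the Hellinger distance from below by a constant multiple of $p_1$ on $A$, and then integrate.

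\emph{Pointwise bound.} For $x \in A$,
\[
\sqrt{p_1(x)}-\sqrt{p_2(x)} \;\ge\; \sqrt{p_1(x)}-\tfrac12\sqrt{p_1(x)} \;=\; \tfrac12\sqrt{p_1(x)} \;\ge\; 0 .
\]
Squaring gives
\[
\bigl(\sqrt{p_1(x)}-\sqrt{p_2(x)}\bigr)^2 \;\ge\; \tfrac14\, p_1(x), \qquad x \in A .
\]

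\emph{Integration.} Integrate this inequality over $A$ and use that the integrand is nonnegative everywhere, so enlarging the domain of integration from $A$ to $\R$ can only increase the integral:
\[
\tfrac14\int_A p_1 \;\le\; \int_A \bigl(\sqrt{p_1}-\sqrt{p_2}\bigr)^2 \;\le\; \int_{\R}\bigl(\sqrt{p_1}-\sqrt{p_2}\bigr)^2 \;=\; 2\,\Helsq(p_1,p_2).
\]
The last equality is the paper's normalization $\Helsq(p,q) = \tfrac12\int(\sqrt p-\sqrt q)^2$.

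\emph{Conclusion.} Multiplying through by $4$ gives $\int_{p_1>4p_2} p_1 \le 8\,\Helsq(p_1,p_2)$, as claimed. The only delicate point is tracking the factor $\tfrac12$ in the definition of $\Helsq$; it accounts for the constant $8$ rather than $4$.
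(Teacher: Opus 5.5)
Your proof is correct and is essentially the paper's argument: both use $\sqrt{p_2}<\tfrac12\sqrt{p_1}$ on $\{p_1>4p_2\}$ to get $p_1\le 4\bigl(\sqrt{p_1}-\sqrt{p_2}\bigr)^2$ there. Both then enlarge the domain of integration to $\R$ and use $\int\bigl(\sqrt{p_1}-\sqrt{p_2}\bigr)^2 = 2\,\Helsq(p_1,p_2)$, which gives the constant $8$.
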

	
	\begin{proof}
		On the set $p_1 > 4p_2$, we have:
		\[
		\sqrt{p_1} > 2\sqrt{p_2} \quad \Longrightarrow \quad 2(\sqrt{p_1} - \sqrt{p_2}) \geq \sqrt{p_1}.
		\]
		Therefore,
		\[
		\int_{p_1 > 4p_2} p_1 \leq \int_{p_1 > 4p_2} \left(2(\sqrt{p_1} - \sqrt{p_2})\right)^2 \leq 4 \int (\sqrt{p_1} - \sqrt{p_2})^2 = 8\,\Helsq(p_1, p_2).
		\]
		This completes the proof.
	\end{proof}
	
	\begin{proof}[Proof of \cref{lem:Vk-hellinger-bound}]
		There exists a constant $C_k$ such that for $x \geq \frac{1}{4}$,
		\[
		|\log x|^k \leq C_k (\sqrt{x} - 1)^2.
		\]
		Therefore, on the set $p_1 \leq 4p_2$, we have
		\[
		|\log(p_1/p_2)|^k \leq C_k (\sqrt{p_2/p_1} - 1)^2.
		\]
		For any $M > 4 \vee e^{k/\delta}$, we can now write
		\begin{align*}
			V_k(p_1 \| p_2) &= \int_{p_1/p_2 \leq 4} p_1 |\log(p_1/p_2)|^k + \int_{4 < \frac{p_1}{p_2} \leq M} p_1 |\log(p_1/p_2)|^k + \int_{p_1/p_2 > M} p_1 |\log(p_1/p_2)|^k \\
			&\leq C_k \int p_1 \left(\sqrt{\frac{p_2}{p_1}} - 1\right)^2 + \log^kM \int_{p_1 > 4p_2} p_1 + \int_{p_1/p_2 > M} p_1 \left(\frac{p_1}{p_2}\right)^\delta \cdot \frac{\log^k\!\left(\frac{p_1}{p_2}\right)}{\left(\frac{p_1}{p_2}\right)^\delta} \\
			&\leq (C_k + 8\log^kM)\,\Helsq(p_1, p_2) + \int_{p_1/p_2 > M} p_1 \left(\frac{p_1}{p_2}\right)^\delta \cdot \frac{\log^k\!\left(\frac{p_1}{p_2}\right)}{\left(\frac{p_1}{p_2}\right)^\delta},
		\end{align*}
		where the last inequality follows from \cref{lem:hellinger-integral-bound}. To bound the last term in the above display, we observe that the function $x\mapsto (\log^k x)/x^\delta$ is decreasing for $x \geq e^{k/\delta}$. This implies
		\[
		\int_{p_1/p_2 > M} p_1 \left(\frac{p_1}{p_2}\right)^\delta \cdot \frac{\log^k\!\left(\frac{p_1}{p_2}\right)}{\left(\frac{p_1}{p_2}\right)^\delta} \leq \frac{\log^k M}{M^\delta} \cdot M_\delta.
		\]
		Combining the two displays above, we get:
		\[
		V_k(p_1 \| p_2) \leq (C_k + 8\log^k M)\,\Helsq(p_1, p_2) + \frac{\log^kM}{M^\delta} \cdot M_\delta.
		\]
		Next we choose $M := 8 \vee e^{k/\delta} \vee e^{C_k^{1/k}} \vee \left(M_\delta/\{5\,\Helsq(p_1, p_2)\}\right)^{1/\delta}$. Let $a_{k,\delta} := 8 \vee e^{k/\delta} \vee e^{C_k^{1/k}}$.
		Then we have
		\begin{align*}
			V_k(p_1 \| p_2) &\leq \log^kM\left(5\,\Helsq(p_1, p_2) + M^{-\delta} \cdot M_\delta\right) \\
			&\leq 10\left(\left|\log a_{k,\delta}\right|^k \vee \delta^{-k} \left|\log\!\left(\frac{M_\delta}{5\,\Helsq(p_1, p_2)}\right)\right|^k\right) \Helsq(p_1, p_2).
		\end{align*}
		This completes the proof.
	\end{proof}
	
	\begin{proof}[Proof of \cref{lem:reweighted-Vk-bound}]
		Suppose $M > 4 \vee e^{k/\delta}$ be chosen large enough (to be fixed later). We split the integral in $V_k(q;\, p_1, p_2)$ into four parts.
		
		\medskip
		\noindent\emph{Case 1.} Consider the event $\{p_1/p_2 > M\}$. Note that
		\[
		\int_{p_1/p_2 > M} q\, |\log \tfrac{p_1}{p_2}|^k = \int_{p_1/p_2 > M} q \left(\frac{p_1}{p_2}\right)^\delta \cdot \frac{\log^k\!\left(\frac{p_1}{p_2}\right)}{\left(\frac{p_1}{p_2}\right)^\delta} \leq \frac{\log^kM}{M^\delta} \cdot T_\delta,
		\]
		where the last inequality uses the fact that $x\mapsto (\log^k x)/x^\delta$ is decreasing in $x$ for $x \geq e^{k/\delta}$.
		
		\medskip
		\noindent\emph{Case 2}. Consider the event $\{p_1/p_2 < \frac{1}{M}\}$. By the same argument as in Case~1, we have:
		\[
		\int_{p_1/p_2 < 1/M} q\, |\log \tfrac{p_1}{p_2}|^k = \int_{p_1/p_2 < 1/M} q \left(\frac{p_2}{p_1}\right)^\delta \cdot \frac{\log^k\!\left(\frac{p_2}{p_1}\right)}{\left(\frac{p_2}{p_1}\right)^\delta} \leq \frac{\log^kM}{M^\delta} \cdot T_\delta.
		\]
		
		\medskip
		\noindent\emph{Case 3.} Consider the event $\{\frac{1}{4} < \frac{p_1}{p_2} \leq M\}$. Note that for $x \geq 1/4$, there exists a constant $C_k$ such that $|\log x|^k \leq C_k(\sqrt{x} - 1)^2$. As a result, we note that
		\begin{align*}
			\int_{1/4 \leq p_1/p_2 \leq M} q\, |\log \tfrac{p_1}{p_2}|^k &= \int_{\substack{q \leq 4p_2,\\ 1/4 \leq p_1/p_2 \leq M}} q\, |\log \tfrac{p_1}{p_2}|^k + \int_{\substack{q > 4p_2,\\ 1/4 \leq p_1/p_2 \leq M}} q\, |\log \tfrac{p_1}{p_2}|^k \\
			&\leq 4C_k \int p_2 \left(\sqrt{\frac{p_1}{p_2}} - 1\right)^2 + \log^kM \int_{q > 4p_2} q \\
			&\leq 4C_k\,\Helsq(p_1, p_2) + 8\log^kM\,\Helsq(q, p_2).
		\end{align*}
		Here the last inequality follows from \cref{lem:hellinger-integral-bound}.
		
		\medskip
		\noindent\emph{Case 4.} Consider the event $\{\frac{1}{M} \leq \frac{p_1}{p_2} < \frac{1}{4}\}$. By a similar argument as in Case~3, we have:
		\begin{align*}
			\int_{1/M \leq p_1/p_2 < 1/4} q\, |\log \tfrac{p_1}{p_2}|^k &= \int_{\substack{4 < p_2/p_1 \leq M}} q\, \log^k\!\left(\frac{p_2}{p_1}\right) \\
			&= \int_{\substack{q \leq 4p_1,\\ 4 < p_2/p_1 \leq M}} q\, \log^k\!\left(\frac{p_2}{p_1}\right) + \int_{\substack{q > 4p_1,\\ 4 < p_2/p_1 \leq M}} q\, \log^k\!\left(\frac{p_2}{p_1}\right) \\
			&\leq 4C_k \int p_1 \left(\sqrt{\frac{p_2}{p_1}} - 1\right)^2 + \log^kM \int_{q > 4p_1} q \\
			&\leq 4C_k\,\Helsq(p_1, p_2) + 8\log^k M\,\Helsq(q, p_1) \\
			&\leq (4C_k + 16\log^kM)\,\Helsq(p_1, p_2) + 16\log^kM\,\Helsq(q, p_2).
		\end{align*}
		\medskip
		Combining the observations from Cases~1 through~4, we get:
		\[
		V_k(q;\, p_1, p_2) \leq 2T_\delta \cdot \frac{\log^kM}{M^\delta} + 16(C_k + \log^kM)\,\Helsq(p_1, p_2) + 24\log^kM\,\Helsq(q, p_2).
		\]
		Next we choose
		\[
		M > 24 \vee e^{k/\delta} \vee e^{C_k^{1/k}} \vee \left(\frac{T_\delta}{\Helsq(p_1, p_2) + \Helsq(q, p_2)}\right)^{1/\delta}
		\]
		and set $a_{k,\delta} := 24 \vee e^{k/\delta} \vee e^{C_k^{1/k}}$. We then have:
		\begin{align*}
			&\;\;\;\; V_k(q;\, p_1, p_2) \nonumber \\ &\leq 48\log^kM\left(\frac{T_\delta}{M^\delta} + \Helsq(p_1, p_2) + \Helsq(q, p_2)\right) \\
			&\leq 96 \left(|\log(a_{k,\delta})|^k \vee \delta^{-k} \left|\log\!\left(\frac{T_\delta}{\Helsq(p_1, p_2) + \Helsq(q, p_2)}\right)\right|^k\right) \left(\Helsq(p_1, p_2) + \Helsq(q, p_2)\right).
		\end{align*}
		This completes the proof.
	\end{proof}
	
	\begin{proof}[Proof of \cref{lem:reweighted-KL-hellinger}]
		Note that
		\begin{align*}
			\KL(q;\, p_1 | p_2) &= \int (\sqrt{q} - \sqrt{p_2})(\sqrt{q} + \sqrt{p_2})\,\log \frac{p_1}{p_2} + \int p_2 \log \frac{p_1}{p_2} \\
			&\leq \left(\int (\sqrt{q} - \sqrt{p_2})^2\right)^{1/2} \left(2\int (q + p_2)\log^2 \frac{p_1}{p_2}\right)^{1/2} + \int p_2 \log \frac{p_1}{p_2} \\
			&= \sqrt{2}\,\Hel(q, p_2)\,\sqrt{V_2(q;\, p_1, p_2) + V_2(p_2 \| p_1)} + \int p_2 \log \frac{p_1}{p_2}.
		\end{align*}
		Next we note that $\log(x) \leq 2(\sqrt{x} - 1)$ for $x \geq 0$. As a result,
		\[
		\int p_2 \log \frac{p_1}{p_2} \leq 2\int p_2 \left(\sqrt{\frac{p_1}{p_2}} - 1\right) = -2\int (1 - \sqrt{p_1 p_2}) = -2\Helsq(p_1, p_2).
		\]
		This completes the proof.
	\end{proof}
	
	\section{Proof of Auxiliary results from \cref{sec:pfgeneral}}
	\begin{proof}[Proof of \cref{lem:curvaturebd}]
		Assume without loss of generality that $\mathrm{KL}(f_{\pi_1,\sigma_i}|f_{\pi_2,\sigma_i})\le 1/2$.
		By \cite[Theorem 2, part 2]{Nguyen2013}, there exists a universal constant $C$ (depending on $M$) such that
		$$W_1(\pi_1,\pi_2)\le C\big(-\log{\tv(f_{\pi_1,\sigma_i},f_{\pi_2,\sigma_i})}\big)^{-\frac{1}{2}},$$
		for every $1\le i\le n$.
		By \citet[Theorem 3, part (i)]{goldfeld2020gaussian}, we have $d_{\eta}(\pi_1,\pi_2)\le W_1(\pi_1,\pi_2)$ for all $\eta>0$. Further by Pinsker's inequality, we have $\tv(f_{\pi_1,\sigma_i},f_{\pi_2,\sigma_i})\le \sqrt{2\mathrm{KL}(f_{\pi_1,\sigma_i}|f_{\pi_2,\sigma_i})}$. Combining the above observations with the fact that $\mathrm{KL}(f_{\pi_1,\sigma_i}|f_{\pi_2,\sigma_i})\le 1/2$, we get
		$$\delta \le d_{\eta}(\pi_1,\pi_2)\le C\left(-\frac{1}{2}\log{(2\mathrm{KL}(f_{\pi_1,\sigma_i}|f_{\pi_2,\sigma_i}))}\right)^{-1/2}.$$
		This completes the proof.
	\end{proof}
	
	\begin{proof}[Proof of \cref{lem:ptwiseconc}]
		Note that the function 
		$$g_i(x):=\log\frac{f_{\pi,\sigma_i}(x)}{f_{\pst,\sigma_i}(x)}$$
		satisfies $\lVert g_i''\rVert_{\infty}\le 2M^2$ and $\lVert g_i'\rVert_{\infty}\le 2M$  as both $\pi$ and $\pst$ are elements of $\cP([-M,M])$. Therefore by \cref{asn:taildef}, we have 
		\begin{align*}
			\frac{1}{n}\sum_{i=1}^n g_i(X_i) - \frac{1}{n}\sum_{i=1}^n \E g_i(X_i)=\frac{1}{n}\sum_{i=1}^n \log\frac{f_{\pi,\sigma_i}(X_i)}{f_{\pst,\sigma_i}(X_i)} - \frac{1}{n}\sum_{i=1}^n \E \log\frac{f_{\pi,\sigma_i}(X_i)}{f_{\pst,\sigma_i}(X_i)} \overset{\PP}{\to} 0.
		\end{align*}
		Next we sample $Z_i\sim f_{\pst,\sigma_i}$ and consider the optimal $W_1$- couplings between $\mu_i$ and $f_{\pst,\sigma_i}$, to get: 
		\begin{align*}
			\frac{1}{n}\sum_{i=1}^n \E g_i(X_i) - \frac{1}{n}\sum_{i=1}^n \E g_i(Z_i) \le \frac{1}{n}\sum_{i=1}^n \big|\E g_i(X_i)-\E g_i(Z_i)\big|\lesssim \frac{1}{n}\sum_{i=1}^n W_1(\mu_i,f_{\pst,\sigma_i})\to 0.
		\end{align*}
		Next we observe that 
		\begin{align*}
			\frac{1}{n}\sum_{i=1}^n \EE g_i(Z_i) = \EE\left[\frac{1}{n}\sum_{i=1}^n \log \frac{f_{\pi,\sigma_i}(Z_i)}{f_{\pst,\sigma_i}(Z_i)}\right] = -\frac{1}{n}\sum_{i=1}^n \mathrm{KL}(f_{\pst,\sigma_i}|f_{\pi,\sigma_i}) \le -\tau(\delta)\equiv -\tau,
		\end{align*}
		where the last inequality uses \cref{lem:curvaturebd} for any $\pi$ such that $d_{\eta}(\pi,\pst)\ge \delta$. Combining the above observations, we get: 
		\begin{align*}
			&\;\;\;\;\PP\left(\frac{1}{n}\sum_{i=1}^n \log\frac{f_{\pi,\sigma_i}(X_i)}{f_{\pst,\sigma_i}(X_i)}\ge -\tau/3\right) \\ &\le \PP\left(\bigg|\frac{1}{n}\sum_{i=1}^n g_i(X_i) - \frac{1}{n}\sum_{i=1}^n \E g_i(X_i)\bigg| + \frac{1}{n}\sum_{i=1}^n \big|\E g_i(X_i) - \E g_i(Z_i) \big| \ge 2\tau/3\right) \to 0.
		\end{align*}
		This completes the proof.
	\end{proof}
	
	\begin{proof}[Proof of \cref{lem:coverbd}]
		Choose any $0<\eta^2<c^2/2$ where $c>0$ is chosen as in \cref{asn:appopt}. Note that under $\pi\in\mathcal{P}([-M,M])$, we have $$ N(0,\sigma_i^2) \star \pi \overset{d}{=} N(0,\sigma_i^2-\eta^2)\star \pi_{\eta},$$
		where $\pi_{\eta}:=\pi \star N(0,\eta^2)$. Therefore,
		$$f_{\pi,\sigma_i}(X_i)=\frac{1}{\sqrt{2\pi(\sigma_i^2-\eta^2)}}\int \exp\left(-\frac{1}{2(d_i^2-\eta)}(X_i-\theta)^2\right)\,d G_{\pi,\eta}(\theta)$$
		for $j=1,2$ and $1\le i\le n$, where $G_{\pi,\eta}$ denotes the distribution function of $\cN(0,\eta^2)\star \pi$. Let $F_{\eta}(\cdot)$ denote the optimal transport map from $\cN(0,\eta^2)\star\pi_2$ to $\cN(0,\eta^2)\star\pi_1$, i.e., $F_{\eta}(x) = G_{\pi_1,\eta}^{-1}(G_{\pi_2,\eta}(x)).$ We can then write
		\begin{align*}
			&\;\;\;\;\log f_{\pi_1,\sigma_i}(X_i) - \log f_{\pi_2,\sigma_i}(X_i)
			\\ &=\log\bigg\langle \exp\left(-\frac{1}{2(\sigma_i^2-\eta^2)}\left\{(X_i-F_{\eta}(\theta))^2-(X_i-\theta)^2\right\}\right)\bigg\rangle_i
		\end{align*}
		where, given any function $u:\R\to\R$, we set
		$$\langle u\rangle_i :=\frac{\int u(\theta)\exp(-H_i(\theta))\,dG_{\pi_2,\eta}(\theta)}{\int \exp(-H_i(\theta))\,d G_{\pi_2,\eta}(\theta)}, \quad H_i(\theta):=\frac{1}{2(\sigma_i^2-\eta^2)}(X_i-\theta)^2.$$
		As
		$$(X_i-F_{\eta}(\theta))^2-(X_i-\theta)^2=-2(F_{\eta}(\theta)-\theta)(X_i-\theta)+(F_{\eta}(\theta)-\theta))^2,$$
		by using Jensen's inequality with respect to the probability measure
		$\propto\exp(-H_i(\theta))\,dG_{\pi_2,\eta}(\theta)$, $1\le i\le n$, we now get
		\begin{align}\label{eq:lbd}
			\frac{1}{n}\sum_{i=1}^n \left(\log f_{\pi_1,\sigma_i}(X_i) - \log f_{\pi_2,\sigma_i}(X_i)\right) \ge -\frac{1}{n c^2}\sum_{i=1}^n \left(\langle (F_{\eta}(\theta)-\theta)^2\rangle_i+2c\langle (F_{\eta}(\theta)-\theta)^2\rangle_i^{1/2} \langle H_i(\theta)\rangle_i^{1/2}\right).
		\end{align}
		The last bound follows from the Cauchy-Schwartz inequality.
		First let us bound $\langle H_i(\theta)\rangle_i$ for $i=1,\ldots ,n$. To wit, let us define $Z_i:=\int \exp(-H_i(\theta))\,d G_{\pi_2,\eta}(\theta)$. As $H_i(\cdot)$ is non-negative, $\exp(-H_i(\theta))/Z_i\le 1$ on the event $\exp(-H_i(\theta))\le Z_i$, and $H_i(\theta)\le -\log{Z_i}$ on the complementary event $\exp(-H_i(\theta))>Z_i$, we have
		\begin{align*}
			\langle H_i(\theta)\rangle_i&=\int H_i(\theta)\frac{\exp(-H_i(\theta))}{Z_i}\,d G_{\pi_2,\eta}(\theta)\\ &\le \int H_i(\theta)\,d G_{\pi_2,\eta}(\theta)+\max\{0,-\log Z_i\}\le 2\int H_i(\theta)\, d G_{\pi_2,\eta}(\theta).
		\end{align*}
		The last step follows from Jensen's inequality. Let us write $m_{\pi_2}$ and $\sigma^2_{\pi_2}$ for the mean and variance under $\pi_2$. Exact computations then yield
		\begin{align}\label{eq:norbd}
			2\int H_i(\theta)\, d G_{\pi_2,\eta}(\theta)=\frac{1}{\sigma_i^2-\eta^2}\big[(X_i-m_{\pi_2})^2+\sigma^2_{\pi_2}+\eta^2\big] \le \frac{2}{c^2}\big(2X_i^2+\tilde{C}^2\big),
		\end{align}
		for some constant $\tilde{C}$.
		
		\noindent We will now bound $|F_{\eta}(\theta)-\theta|$. Let us fix a constant $B\equiv B(t)$ sufficiently large, to be chosen later. Note that for any $\pi\in\mathcal{P}([-M,M])$, the smoothed distributions $G_{\pi,\eta}$ all have densities lower bounded by an $\eta$-dependent constant on any fixed compact interval, and hence the distribution functions $G_{\pi_1,\eta}^{-1}$ are uniformly Lipschitz over any fixed compact interval in $(0,1)$. Therefore, for $B\equiv B(t)$ and all $\theta\in [-B,B]$, we have
		$$|F_{\eta}(\theta)-\theta|=|G_{\pi_1,\eta}^{-1}(G_{\pi_2,\eta}(\theta))-G_{\pi_1,\eta}^{-1}(G_{\pi_1,\eta}(\theta))|\le C'|G_{\pi_2,\eta}(\theta)-G_{\pi_1,\eta}(\theta)|\le C'\tilde{d}_{\eta,B}(\pi_1,\pi_2).$$
		Note that $C'$ here depends on $B$.
		
		\noindent On the other hand, consider general $\theta\in\R$. Suppose $\theta_1\sim\pi_1$, $\theta_2\sim\pi_2$, $Z_1\sim \cN(0,1)$ are jointly independent of each other. Then $\PP[\theta_1+\eta Z_1\le \theta-2M]\le \PP[\theta_2+\eta Z_1\le \theta]\le \PP[\theta_1+\eta Z_1\le \theta+2M]$ as $|\theta_1-\theta_2|\le 2M$. Then $G_{\pi_1,\eta}(\theta-2M)\le G_{\pi_2,\eta}(\theta)\le G_{\pi_1,\eta}(\theta+2M)$, which implies that
		$$|F_{\eta}(\theta)-\theta|=|G_{\pi_1,\eta}^{-1}(G_{\pi_2,\eta}(\theta))-\theta|\le 2M.$$
		Combining the above observations, we have
		$$(F_{\eta}(\theta)-\theta)^2\le (C')^2\tilde{d}^2_{\eta,B}(\pi_1,\pi_2)+4M^2 \mathbf{1}(|\theta|>B) \le (C')^2\tilde{d}_{\eta,B}(\pi_1,\pi_2)+4M^2 \mathbf{1}(|\theta|>B).$$
		By \eqref{eq:lbd}, we then get
		\begin{small}
			\begin{align*}
				&\;\;\;\;\frac{1}{n}\sum_{i=1}^n \big(\log{f_{\pi_1,\sigma_i}(X_i)}-\log {f_{\pi_2,\sigma_i}(X_i)}\big) \\ &\ge -\frac{(C')^2}{c^2}\tilde{d}_{\eta,B}(\pi_1,\pi_2)-\frac{4M^2}{c^2 n}\sum_{i=1}^n \langle \mathbf{1}(|\theta|>B)\rangle_i-\frac{2\sqrt{2}}{c^2 n}\sum_{i=1}^n \big(C'\tilde{d}_{\eta,B}(\pi_1,\pi_2)+2M\langle \mathbf{1}(|\theta|>B)\rangle_i\big)\big(\sqrt{2}|X_i|+\tilde{C}\big).
			\end{align*}
		\end{small}
		As we have restricted to the event $\sum_{i=1}^n X_i^2\le C_0 n$, we have $\sum_{i=1}^n |X_i|\le \sqrt{C_0}n$.
		As a result, we see that there exists $L>0$ depending on $B$ and a constant $\bar{C}$ not depending on $B$ such that 
		\begin{small}
			\begin{align}\label{eq:covering}
				&\;\;\;\;\frac{1}{n}\sum_{i=1}^n \big(\log{f_{\pi_1,\sigma_i}(X_i)}-\log {f_{\pi_2,\sigma_i}(X_i)}\big) \nonumber \\ &\ge -L\max{(\tilde{d}_{\eta,B}(\pi_1,\pi_2)^2,\tilde{d}_{\eta,B}(\pi_1,\pi_2))} - \frac{\bar{C}}{n}\sum_{i=1}^n \langle \mathbf{1}(|\theta|>B)\rangle_i-\sqrt{\frac{\bar{C}}{n}\sum_{i=1}^n \langle\mathbf{1}(|\theta|>B)\rangle_i}.
			\end{align}
		\end{small}
		Let us now bound $\sum_{i=1}^n \langle \mathbf{1}(|\theta|>B)\rangle_i$. Fix $t>0$ small enough to be chosen later.  Let $K_{n,t}:=\{i:|X_i|\ge \sqrt{2C_0/t},\ 1\le i\le n\}.$ Then $2C_0|K_{n,t}|/t\le C_0 n$ which implies $|K_{n,t}|\le nt/2$. Further, for any $i\in K_{n,t}^c$, we have by Jensen's inequality and \eqref{eq:norbd},
		$$\log Z_i\ge -\int H_i(\theta)\,d G_{\pi_2,\eta}(\theta)\ge -C(t)$$
		for some constant $C(t)$ depending on $t$ (note that $C(t)\to\infty$ as $t\to 0$). Therefore,
		\begin{align*}
			\frac{1}{n}\sum_{i=1}^n \langle \mathbf{1}(|\theta|>B)\rangle_i &\le \frac{t}{2}+\frac{1}{n}\sum_{i\in K_{p,t}^c} \int \mathbf{1}(|\theta|>B)\frac{\exp(-H_i(\theta))}{Z_i}\,d G_{\pi_2,\eta}(\theta) \\ &\le \frac{t}{2}+\frac{1}{n}\exp(C(t))\sum_{i=1}^n \int \mathbf{1}(|\theta|>B)\,d G_{\pi_2,\eta}(\theta) \\ &\le \frac{t}{2}+\exp(C(t))\PP(N(0,\eta)>B-M).
		\end{align*}
		By choosing $B\equiv B(t,\eta,M)$ large enough, we can ensure $\PP(\cN(0,\eta^2)>B-M)\le \exp(-2C(t)).$ Therefore, given any small $\lambda>0$, we can choose $t>0$ small enough and $B>0$ large enough such that $\sum_{i=1}^n \langle \mathbf{1}(|\theta|>B)\rangle_i \le n\lambda$. Consequently, given any $\iota>0$, we can choose $t>0$ small enough, followed by $B>0$ large enough, so that \eqref{eq:covering} yields
		$$\frac{1}{n}\sum_{i=1}^n \big(\log{f_{\pi_1,\sigma_i}(X_i)}-\log{f_{\pi_2,\sigma_i}(X_i)}\big)\ge -L \tilde{d}_{\eta,B}(\pi_1,\pi_2))-\iota.$$
		The result follows by switching the role of $\pi_1$ and $\pi_2$.
	\end{proof}
	
	\begin{proof}[Proof of \cref{lem:truncate}]    
		By \cref{asn:taildef}, it follows that there exists $\tilde{C}_0>0$ large enough such that 
		$$\frac{1}{n}\sum_{i=1}^n \E X_i^2 \le \tilde{C}_0$$
		for all $n$ large enough. Define $C_0:=3\tilde{C}_0$. By \cref{asn:taildef}, we then have 
		\begin{align*}
			\limsup_{n\to\infty}\, \PP\left(\frac{1}{n}\sum_{i=1}^n X_i^2 \ge C_0\right)\le \limsup_{n\to\infty}\, \PP\left(\frac{1}{n}\sum_{i=1}^n X_i^2 - \frac{1}{n}\sum_{i=1}^n \E X_i^2 \ge \tilde{C}_0\right) = 0.
		\end{align*}
		This completes the proof.
	\end{proof}
	
	\begin{proof}[Proof of \cref{lem:unifbd}]
		Given $\delta>0$, set $\iota:=\tau/10$, where $\tau\equiv \tau(\delta)$ (see \eqref{eq:curvaturebd}). By using \cref{lem:truncate}, there exists $C_0>0$ such that $\sum_{i=1}^n X_i^2\le C_0 n$ outside an event of probability converging to $0$ as $n\to\infty$. With the aforementioned choice of $C_0$ and $\iota$, we get an appropriate $\eta,B,L$ from \cref{lem:coverbd}.
		
		\noindent Next fix any $\upsilon\ge \eta$. Consider the set $\{\pi\in\mathcal{P}([-M,M]):\ d_{\upsilon}(\pi,\pst)\ge \delta\}$ and choose a $(\tau/10L)$-covering set, say $\mathcal{S}(\eta,B,L,\tau)\subseteq \{\pi\in\mathcal{P}([-M,M]):\ d_{\upsilon}(\pi,\pst)\ge \delta\}$,  with respect to the pseudo-metric $\tilde{d}_{\eta,B}(\cdot,\cdot)$. By a standard packing argument, this set has some finite cardinality $N(\eta,B,L,\tau)$ as all the cumulative distribution functions $G_{\mu,\eta}$ for $\mu\in\mathcal{P}([-M,M])$ have a uniformly upper and lower bounded derivatives over the interval $[-B,B]$.
		
		\noindent Now given any $\pi\in\mathcal{P}([-M,M])$ such that $d_{\upsilon}(\pi,\pst)\ge\delta$, there exists $\ell(\pi)\in\mathcal{S}(\eta,B,L,\tau)$ such that $\tilde{d}_{\eta,B}(\pi,\ell(\pi))\le (\tau/10L)$. Therefore, given any $\pi$ such that $d_{\upsilon}(\pi,\pst)\ge \delta$, by \cref{lem:coverbd}, we  have
		$$\frac{1}{n}\sum_{i=1}^n \big(\log{f_{\pi,\sigma_i}(X_i)}-\log{f_{\pst,\sigma_i}(X_i)}\big)\le \frac{\tau}{5}+\frac{1}{n}\sum_{i=1}^n \big(\log{P_{\ell(\pi),i}(X_i)}-\log{f_{\pst,\sigma_i}(X_i)}\big),$$
		on the set $\sum_{i=1}^n X_i^2\le C_0 n$. Therefore, by a direct union bound, we get:
		\begin{align*}
			&\;\;\;\;\limsup_{n\to\infty}\PP\left(\sup_{\pi\in\mathcal{P}([-M,M]): d_{\eta}(\pi,\pst)\ge \delta}\frac{1}{n}\sum_{i=1}^n \big(\log{f_{\pi,\sigma_i}(X_i)}-\log{f_{\pst,\sigma_i}(X_i)}\big)\ge -\frac{2\tau}{15}\right) \\ &\le \sum_{\pi\in\mathcal{S}(\eta,B,L,\tau)} \limsup_{n\to\infty} \PP\left(\frac{1}{n}\sum_{i=1}^n \big(\log{f_{\pi,\sigma_i}(X_i)}-\log{f_{\pst,\sigma_i}(X_i)}\big)\ge -\tau/3\right) = 0.
		\end{align*}
		The last inequality follows from \cref{lem:ptwiseconc}. This completes the proof.
	\end{proof}

	\section{Examples of heuristic appeals to the CLT in empirical Bayes}
	\label{sec:handwave}
	
	\subsection{Papers in economics}
	\begin{itemize}
		\item  \citet{gu2018oracle}: ``In this case, the statistic $S_i$ follows a normal mixture distribution only asymptotically under the null and alternative (cf. \citeauthor{cao2011simultaneous}, \citeyear{cao2011simultaneous}), and therefore the mixture normal set-up considered in the methodology section is slightly misspecified. The impact of misspecification on the properties of deconvolution estimators or multiple testing procedures has not yet been studied in the literature, as far as we know. This appendix takes the first step in examining the impact using Monte Carlo simulations. We experiment with both normal and non-normal errors for different DGPs. Our results suggest that the asymptotic approximation of the base density does not affect the performance of the proposed testing method. A more rigourous theoretical investigation is left for future research.'' 
		\item \citet{azevedo2019empirical}: ``For each idea, the firm performs an experiment, or A/B test, with $n$ users. The experiment yields an estimated quality $\hat{\Delta}_i$. The estimated quality is an estimated treatment effect. We assume that estimated quality is normally distributed with mean $\Delta_i$ and variance $\sigma^2/n$. This is reasonable because of randomization and because of the large samples used by internet companies.''
		\item \citet{azevedo2020testing}: ``Third, experimental errors are normally distributed. This is a reasonable assumption in our main application because the typical estimator for the unknown quality is a difference between sample means with independently and identically distributed data, and treatment/control groups are in the millions.''
		\item \citet{deng2021postselection}:  ``We consider a standard A/B test with a treatment and a control group of sample sizes $N_T$ and $N_C$ and metric values $Y_T$ and $Y_C$, respectively. A metric could be in the form of an average across i.i.d. samples, but is not limited to it. The central limit theorem entails that when sample sizes are large enough, the estimated treatment effect $\Delta = Y_T-Y_C$ approximately follows a Gaussian distribution
		with mean $\mu$ and variance $\sigma_T^2/N_T + \sigma_C^2/N_C$.''
		\item \citet{kline2022systemic}: ``The normality assumption for $z_f$ can be justified by an asymptotic approximation with a growing number of jobs sampled for each firm.''
		\item \citet{gu2023invidious}: ``The classical variance stabilizing transformation for the Poisson brings us back to the Gaussian model.''
		\item \citet{abadie2023estimating}: ``The Gaussianity assumption on the distribution of $\hat{\tau} \mid \tau$ is motivated by approximate Gaussianity of the large sample distributions of many commonly used estimators of treatment effects.''
		\item \citet{gu2022ranking}: ``Another approach to regularization is to treat the unconstrained logistic estimates, \smash{$\hat{\theta}$}, as approximately independent draws from a Gaussian sequence model. When the problem design is unbalanced so the number of matched pairs are not equal, the MLE point estimates will have different precision and off-diagonal elements of their covariance matrix are also more heterogeneous. Initially, we will ignore the latter aspect and treat the estimated maximum likelihood rating parameters as a sample from a Gaussian sequence model with heterogeneous scale parameters.''
		\item \citet{lee2024improving}: ``When estimating average impacts, the central limit theorem applied to the individuals within the site provides the normality or something close to it.''
		\item \citet{wernerfelt2025estimating}: ``For the lower-level distributions, we make a normality assumption. This is simply an appeal to the central limit theorem: We impose the assumption that our experiments are sufficiently large that the treatment effects are normally distributed around the true unobserved value and our variance estimates have converged.''
		\item \citet{yamin2025poverty}: ``Motivated by the central
		limit theorem, I assume normality of the estimates and consider them a noisy but unbiased signal of $\mu_i$'', ``The normal distribution assumption is reasonable as long as the population from
		which the samples are drawn has a finite variance and the effective sample size for each
		administrative region is relatively large.''
		\item \citet{moon2026optimal}: ``If program costs are observed without statistical
		uncertainty, the Gaussian distribution approximation motivated by the central limit theorem is reasonable.''
	\end{itemize}
	
	\subsection{Papers in statistics}
	
	\begin{itemize}
		\item \citet{deb2022twocomponent}: ``Letting $\Delta_i :=  (\mu_i(2)-\mu_i(1))/\sigma_i$ denote the effect size for the $i$'th gene, we can assume that $t_i$'s are approximately normal.''
		\item \citet{imbens2022comment}:``the approximate normality of the point estimates is arguably reasonable given the sample sizes involved''
		\item \citet{ignatiadis2023empiricala}: ``We could appeal to the central limit theorem to justify knowing that the likelihood is Normal.''
		
		\item \citet{hoff2025selective}: ``A simple but widely applicable model for studying and developing multipopulation inference procedures is the multiple normal means model, where scalar observations $Z_1, \ldots, Z_{p+1}$ are independently sampled from $p + 1$ potentially different normal populations, so that $Z_j \sim N(\mu_j, \psi_j^2)$ independently for $j = 1, \ldots, p+1$, with $\mu_1, \ldots, \mu_{p+1}$ being unknown and $\psi_1^2, \ldots, \psi_{p+1}^2$ (approximately) known. This scenario might arise if the elements of $\mathbf{Z} = (Z_1, \ldots, Z_{p+1})$ are sample averages from $p + 1$ populations with means equal to the corresponding elements of $\boldsymbol{\mu} = (\mu_1, \ldots, \mu_{p+1})$, and a common population variance $\psi^2$ which could be precisely estimated by pooling data across the groups.''
		\item \citet{ling2026empirical}: ``[M]otivated by the central limit theorem, we model the estimators as exactly normal with known second moments; our theory does not propagate the error from the CLT approximation.''
	\end{itemize}
	
\end{document}